\def\co{\colon\thinspace}
\DeclareMathAlphabet{\mathsfsl}{OT1}{cmss}{m}{sl}
\newcommand{\tensor}[1]{\mathsfsl{#1}}
\newtheorem{thm}{Theorem}[section]
\newtheorem{lem}[thm]{Lemma}
\newtheorem{cor}[thm]{Corollary}
\newtheorem{prop}[thm]{Proposition}
\newtheorem*{thm*}{Theorem}
\newtheorem{claim}[thm]{Claim}
\theoremstyle{definition}
\newtheorem{rem}[thm]{Remark}
\newtheorem{exam}[thm]{Example}
\begin{document}

\def\G{{\Gamma}}
  \def\d{{\delta}}
  \def\ci{{\circ}}
  \def\e{{\epsilon}}
  \def\l{{\lambda}}
  \def\L{{\Lambda}}
  \def\m{{\mu}}
  \def\n{{\nu}}
  \def\o{{\omega}}
  \def\O{{\Omega}}
  \def\Th{{\Theta}}\def\s{{\sigma}}
  \def\v{{\varphi}}
  \def\a{{\alpha}}
  \def\b{{\beta}}
  \def\p{{\partial}}
  \def\r{{\rho}}
  \def\ra{{\rightarrow}}
  \def\lra{{\longrightarrow}}
  \def\g{{\gamma}}
  \def\D{{\Delta}}
  \def\La{{\Leftarrow}}
  \def\Ra{{\Rightarrow}}
  \def\x{{\xi}}
  \def\c{{\mathbb C}}
  \def\z{{\mathbb Z}}
  \def\2{{\mathbb Z_2}}
  \def\q{{\mathbb Q}}
  \def\t{{\tau}}
  \def\u{{\upsilon}}
  \def\th{{\theta}}
  \def\la{{\leftarrow}}
  \def\lla{{\longleftarrow}}
  \def\da{{\downarrow}}
  \def\ua{{\uparrow}}
  \def\nwa{{\nwtarrow}}
  \def\swa{{\swarrow}}
  \def\nea{{\netarrow}}
  \def\sea{{\searrow}}
  \def\hla{{\hookleftarrow}}
  \def\hra{{\hookrightarrow}}
  \def\sl{{SL(2,\mathbb C)}}
  \def\ps{{PSL(2,\mathbb C)}}
  \def\qed{{\hspace{2mm}{\small $\diamondsuit$}\goodbreak}}
  \def\pf{{\noindent{\bf Proof.\hspace{2mm}}}}
  \def\ni{{\noindent}}
  \def\sm{{{\mbox{\tiny M}}}}
  \def\sch{{{\mbox{\tiny $\chi$}}}}
   \def\sf{{{\mbox{\tiny F}}}}
   \def\sc{{{\mbox{\tiny C}}}}
  \def\ke{{\mbox{ker}(H_1(\partial M;\2)\ra H_1(M;\2))}}
  \def\et{{\mbox{\hspace{1.5mm}}}}
 \def\sk{{{\mbox{\tiny K}}}}
 \def\sj{{{\mbox{\tiny J}}}}
\def\sp{{{\mbox{\tiny P}}}}
\def\sc{{{\mbox{\tiny C}}}}
\def\sy{{{\mbox{\tiny Y}}}}
\def\sa{{{\mbox{\tiny A}}}}

\title{Detection of  knots and a cabling formula for A-polynomials}

\author{{Yi Ni}\\{\normalsize Department of Mathematics, Caltech}\\
{\normalsize 1200 E California Blvd, Pasadena, CA 91125}
\\{\small\it Email\/:\quad\rm yini@caltech.edu}
\\\\
{Xingru Zhang}
\\
{\normalsize Department of Mathematics,
University at Buffalo}\\
{\small\it Email\/:\quad\rm xinzhang@buffalo.edu}}

\date{}
\maketitle

\begin{abstract}
We say that a given knot $J\subset S^3$ is detected by its knot Floer homology
and  $A$-polynomial if whenever a knot $K\subset S^3$  has the same knot Floer homology and the same $A$-polynomial
as $J$, then $K=J$. In this paper we show
that every torus knot $T(p,q)$ is detected by its knot Floer homology
and  $A$-polynomial. We also give a one-parameter family
of infinitely many hyperbolic knots in $S^3$ each of which is
detected by its knot Floer homology
and  $A$-polynomial.
In addition we give a cabling formula for the A-polynomials of
cabled knots in $S^3$, which is of independent interest. In particular we give explicitly  the A-polynomials of iterated torus knots.
\end{abstract}

\section{Introduction}
One of basic problems in knot theory is
to distinguish knots in $S^3$ from each other
using knot invariants.
There are several knot invariants
each being powerful enough to determine
if a given knot in $S^3$ is the unknot, such as the
knot Floer homology \cite{OSzGenus}, the A-polynomial \cite{BZ} \cite{DG}, and
the Khovanov homology \cite{KM}.
In other words, these invariants are each an unknot-detector.
It is also known that the knot Floer homology can detect
the trefoil knot and the figure 8 knot \cite{Gh}.
In this paper  we  first consider the
problem of detecting the set of torus knots $T(p,q)$ in $S^3$ using knot invariants.
To reach this goal  either the knot Floer homology or the $A$-polynomial  alone is not enough;
for instances the torus knot $T(4,3)$  has the same knot Floer homology as
the $(2,3)$-cable  over $T(3,2)$ \cite{Hed}, and the torus knot $T(15, 7)$
has the same $A$-polynomial as the torus knot $T(35, 3)$.
However when the two invariants are combined together,
the job can be done. We have

\begin{thm}\label{main1}If a knot $K$ in $S^3$  has the same  knot Floer homology and the same $A$-polynomial as a torus knot $T(p,q)$,
then $K=T(p,q)$.
\end{thm}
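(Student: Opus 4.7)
The plan is to use the two invariants complementarily. The knot Floer homology will extract structural information about $K$ (fiberedness, L-space knot, genus, Alexander polynomial, primeness); the $A$-polynomial together with Culler--Shalen theory will force a specific geometric structure on the exterior of $K$ (cable or torus knot); and the cabling formula proved later in this paper will close the argument.

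Explicitly, from $\widehat{HFK}(K)\cong \widehat{HFK}(T(p,q))$ one extracts the following package: $K$ is fibered of genus $g=(p-1)(q-1)/2$ by Ozsv\'ath--Szab\'o genus detection together with the Ghiggini--Ni fibration criterion; $K$ is an L-space knot, since the staircase shape of $\widehat{HFK}(T(p,q))$ forces this; $\Delta_K=\Delta_{T(p,q)}$; and $K$ is prime (L-space knots are prime). From $A_K=A_{T(p,q)}$ and the explicit form of the $A$-polynomial of a torus knot, the Newton polygon of $A_K$ has the cabling slope $pq$ among the slopes of its sides, so by the Culler--Shalen boundary-slope theorem $pq$ is realized as the boundary slope of an essential surface in $S^3\setminus K$. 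A non-meridional boundary slope realized by an essential surface in a prime fibered knot exterior forces, via the annulus theorem and JSJ decomposition, that $K$ is either itself a torus knot or a nontrivial cable $K=C_{a,b}(K')$.

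If $K$ is a torus knot, matching $\Delta$ and $g$ forces $K=T(p,q)$. If $K=C_{a,b}(K')$, the cabling slope identification gives $ab=pq$, the cable Alexander formula $\Delta_K(t)=\Delta_{T(a,b)}(t)\Delta_{K'}(t^a)$ combined with $\Delta_K=\Delta_{T(p,q)}$ constrains $K'$ and $(a,b)$, and the cabling formula for the $A$-polynomial proved later in the paper, together with the Seifert-genus formula for cables, pins down $g(K')=0$ and $\{a,b\}=\{p,q\}$; hence $K'$ is unknotted and $K=T(a,b)=T(p,q)$. The main obstacle is the geometric step above: equality of Newton polygons does not a priori produce a genuine essential surface of the expected slope in $S^3\setminus K$, so one has to combine Culler--Shalen slope detection with the L-space/fibered/prime constraints from Stage I carefully, excluding auxiliary factors in $A_K$ that could spoil the slope reading, before the JSJ dichotomy can be applied.
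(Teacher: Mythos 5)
Your proposal has a genuine gap at its central geometric step. You claim that a prime fibered knot whose exterior contains an essential surface with the non-meridional boundary slope $pq$ must, ``via the annulus theorem and JSJ decomposition,'' be a torus knot or a cable. This is false: every nontrivial knot has non-meridional boundary slopes (the figure-eight knot, for instance, is prime, fibered, and has boundary slopes $\pm 4$), and an essential surface with integral boundary slope need not be an annulus, so no cabling annulus is produced. The paper rules out the hyperbolic case by a different mechanism (Corollary~\ref{non-hyperbolic} together with Theorem~\ref{dual polygons}): every balanced-irreducible factor of $A_{T(p,q)}(x,y)=F_{(p,q)}(x,y)$ has exactly two monomials, hence a one-edge Newton polygon, whereas a hyperbolic knot's character variety contains a norm curve contributing a factor whose Newton polygon has at least two edges of different slopes. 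Your dichotomy also omits most satellite knots: a prime fibered satellite need not be a cable (fibered patterns of nonzero winding number in hyperbolic link exteriors exist), so even after excluding hyperbolic $K$ you cannot reduce to the cable case. The paper instead runs an induction through the companion/pattern structure, using Lemma~\ref{P-factor} ($A_P\mid A_K$), Proposition~\ref{non-zero winding} (factors of $A_C$ extend to factors of $A_K$, which kills hyperbolic companions since they would force a second boundary slope into $A_{T(p,q)}$), the factorization $\Delta_K(t)=\Delta_C(t^w)\Delta_P(t)$, Lemma~\ref{torus knot case}, and a genus count to reach a contradiction.

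Two further points. First, your claim that $K$ is an L-space knot because $\widehat{HFK}(K)\cong\widehat{HFK}(T(p,q))$ has staircase form is not justified: the L-space condition is detected by the full knot Floer complex (or by the existence of an L-space surgery), not by the hat-version homology groups alone, and in any case neither the L-space property nor primeness is needed in the paper's argument. Second, your endgame in the cable case (matching $ab=pq$ and using the cabling formula of Theorem~\ref{cabling formula}) is workable in spirit, but since the reduction to that case fails, the proof as proposed does not go through.
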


We then go further to find out a one-parameter family
of mutually distinct hyperbolic knots $k(l_*,-1,0,0)$ in $S^3$, where $l_*>1$ is integer valued,    each of which is  detected by
the combination of its $A$-polynomial and its knot Floer homology.
A knot diagram for $k(l_*,-1,0,0)$ is
illustrated in Figure~\ref{fig (l,-1)}.
Note that $k(2,-1,0, 0)$ is the $(-2,3,7)$-pretzel knot. Also note that
$k(l_*,-1,0,0)$ is a subfamily of the hyperbolic knots
$k(l,m,n,p)$ (with some forbidden values on integers $l,m,n,p$)  given in \cite{EM1}
each of which admits a half-integral toroidal surgery
(the  slope formula is given in \cite{EM2}, and recalled in
Section~\ref{topological properties of J} of this paper)
and by \cite{GL2} these hyperbolic knots $k(l,m,n,p)$ are the only hyperbolic knots
in $S^3$ which admit non-integral toroidal surgeries.

\begin{thm}\label{main2}The family of knots
$\{k(l_*,-1,0,0); l_*>1, l_*\in \z\}$ are mutually distinct hyperbolic knots in $S^3$.
 Let $J_*$ be any  fixed $k(l_*,-1,0,0)$, $l_*>1$.
 If a knot $K$ in $S^3$  has the same  knot Floer homology and the same $A$-polynomial as  $J_*$, then $K=J_*$.
\end{thm}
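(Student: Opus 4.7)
The theorem has two parts, and I would handle them separately. For the distinctness and hyperbolicity of the family $\{k(l_*,-1,0,0)\}_{l_*>1}$, I would invoke the Eudave-Mu\~{n}oz framework directly: hyperbolicity of $k(l,m,n,p)$ for admissible parameters is part of the setup, and the slope formula recalled in Section~\ref{topological properties of J} assigns to each $k(l_*,-1,0,0)$ an explicit half-integral toroidal surgery slope that depends strictly on $l_*$. The set of non-integral toroidal slopes is an invariant of the knot exterior, so by the Gordon--Luecke knot complement theorem, distinct values of $l_*$ yield distinct knots, and this part reduces to inspection of the slope formula.

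Now fix $J_*=k(l_*,-1,0,0)$ and suppose $K\subset S^3$ has the same knot Floer homology and the same $A$-polynomial as $J_*$. From the knot Floer homology I first read off the Seifert genus (Ozsv\'{a}th--Szab\'{o}), fiberedness status (Ghiggini--Ni), and the Alexander polynomial. Using Hedden's cabling formulas and the known knot Floer homology of torus knots and iterated cables, together with what is known for satellites, I would rule out that $K$ is a torus knot, a cable, or a non-trivial satellite, so $K$ is either hyperbolic or a prime knot of restricted type; composites are excluded by the genus and Alexander polynomial data. This leaves a hyperbolic candidate $K$ to analyze.

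The crux, and the step I expect to be the main obstacle, is converting the equality $A_K=A_{J_*}$ into topological information about $K$. By the Culler--Shalen theory extended to the $A$-polynomial (Cooper--Culler--Gillet--Long--Shalen, Boyer--Zhang), sides of the Newton polygon of $A_{J_*}$ have slopes equal to boundary slopes of essential surfaces in the exterior of $J_*$ detected by the $\mathrm{PSL}(2,\mathbb{C})$ character variety; in particular the half-integral toroidal slope of $J_*$ appears as such a slope. Because $A_K=A_{J_*}$, the same half-integral slope is a detected boundary slope for $K$, forcing $K$'s exterior to contain an essential surface with that non-integral boundary slope. Since $K$ is hyperbolic, one then wants to promote this, using Boyer--Zhang type norm arguments and exceptional surgery analysis, to the statement that $K$ admits a toroidal Dehn filling at the same half-integral slope.

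Once a non-integral toroidal surgery on the hyperbolic knot $K$ is in hand, Gordon--Luecke \cite{GL2} forces $K=k(l',m',n',p')$ for some admissible quadruple, and a final bookkeeping step matches invariants: the half-integral toroidal slope alone (via the Eudave-Mu\~{n}oz formula) strongly constrains the parameters, and equality of the full $A$-polynomial, the Alexander polynomial, and the Seifert genus should pin down $(l',m',n',p')=(l_*,-1,0,0)$, yielding $K=J_*$. The two delicate points I anticipate are: (i) cleanly excluding all non-hyperbolic possibilities for $K$ from the combined knot Floer and $A$-polynomial data, and (ii) certifying that the half-integral slope visible on the Newton polygon of $A_K$ is actually realized by a toroidal surface in the exterior of $K$, rather than merely a detected essential surface of some other type.
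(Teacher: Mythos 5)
Your outline of the hyperbolic case is directionally right, and you correctly identify the promotion from ``detected boundary slope'' to ``toroidal filling'' as a key obstacle. But there are two genuine gaps, one of which is fatal to the plan as stated. First, you propose to rule out satellites (and cables and composites) using knot Floer homology, Hedden's cabling formulas, and ``what is known for satellites.'' This cannot work: the paper itself points out that $T(4,3)$ has the same knot Floer homology as the $(2,3)$-cable over $T(3,2)$, so knot Floer homology does not see the satellite structure. In the actual proof the satellite exclusion is the longest and hardest part of the argument, and it runs entirely through the $A$-polynomial: one first shows $A_{J_*}$ contains no factor of the form $x^jy+\delta$ (using that the integral slopes $s$ and $s-1$ give small Seifert fibred surgeries and that $J_*$ is a small knot), which forces any companion to be hyperbolic; one then proves the winding number $w$ satisfies $w^2\mid(2s-1)$ and $w^2\mid(s-1)$ (by analyzing roots of the system $A=0$, $x^py^q=1$ against surgeries on the companion), hence $w=1$; finally $\Delta_{J_*}=\Delta_C\Delta_P$ forces one of $C,P$ to have genus at most $\tfrac12 g(J_*)$, contradicting the explicit inequality $|s-2|\le 4g$ that every admissible companion must satisfy. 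None of this is replaceable by Floer-theoretic input.

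Second, on the step you flag as delicate point (ii): the paper's mechanism is not a norm argument showing the essential surface is a torus, but a root-counting argument. One shows that every solution $(x_0,y_0)$ of $A_{J_*}(x,y)=0$, $x^dy^2=1$ has $x_0\in\{0,\pm1\}$ (using that the half-integral toroidal filling of $J_*$ is a graph manifold gluing two torus knot exteriors, whose fundamental group has no non-cyclic $SL_2(\mathbb C)$ representations), and separately that a hyperbolic or triangle-type Seifert filling would produce a character with $x_0\notin\{0,\pm1\}$. Since $A_K=A_{J_*}$, the filling $M_K(r)$ is neither hyperbolic nor small Seifert, and CGLS plus Gordon--Luecke then force it to be toroidal, whence $K=k(l,m,n,p)$ by \cite{GL2}. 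Your final ``bookkeeping'' also hides real work: excluding $n\ne0,1$ and forcing $(2p-1)\mid l$ requires constructing irreducible representations without parabolic elements on the toroidal fillings of the other $k(l,m,n,p)$, and the final identification is a nontrivial number-theoretic analysis of the genus and slope formulas, not a routine match of invariants.
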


The key input from the knot Floer homology
in proving Theorems~\ref{main1} and \ref{main2} is that $K$ is fibred \cite{Gh,Ni} (since $T(p,q)$ or $J_*$ is) and has the same Alexander polynomial as
$T(p,q)$ or $J_*$ \cite{OS}.
With these conditions (conclusions)  in place, we can then
identify the knot using  the A-polynomial and the Alexander polynomial (in particular
 the genus of the knot).
The proof of Theorems~\ref{main1}  will be given in Section~\ref{proof1}
after we prepare some general properties on $A$-polynomials  in Section~\ref{Apoly}, where we
also derive a cabling formula  for $A$-polynomials of cabled knots in $S^3$ (Theorem~\ref{cabling formula})
and in particular we give explicitly  the A-polynomials of iterated torus knots
(Corollary~\ref{A-poly of iterated torus}).
The argument for Theorem~\ref{main2} is more involved than
that for Theorem~\ref{main1}, for which we need to  make
some more preparations (besides those
 made in Section~\ref{Apoly})  in the next
three sections.
In Section~\ref{topological properties of J}
we collect some topological properties about the family of knots $k(l,m,n,p)$,
in particular we give a complete genus formula
for $k(l,m,n,p)$ and show that  $k(l_*,-1,0,0)$ is a class of small
knots in $S^3$.
In Section~\ref{A-poly of J} we collect some
info about the $A$-polynomials of the knots $k(l,m,n,p)$
without knowing the explicit formulas of the $A$-polynomials,
and with such info we are able to show that if a hyperbolic knot $K$ has the same $A$-polynomial as a given $J_*=k(l_*,-1,0,0)$, then $K$ has the  same half-integral toroidal surgery slope
as $J_*$ and
$K$ is one of $k(l,m,0,p)$ with $l$ being divisible by $2p-1$.
We then in Section~\ref{distinguish J from k(l,m,0,p)}
identify each $J_*=k(l_*,-1,0,0)$ among  $k(l,m,0,p)$ with $(2p-1)|l$
using the genus formula and the half-integral toroidal slope formula
 for $k(l,m,0,p)$.
Results obtained in these three sections, together with some results from
 Section~\ref{Apoly}, are applied in Section~\ref{proof2}
to complete the proof of Theorem~\ref{main2}.

Note that the $A$-polynomial $A_\sk(x,y)$ (for a knot $K$ in $S^3$)
 used in this paper is a slightly modified version of the original $A$-polynomial given in \cite{CCGLS}.
The only difference is that in the current version, the $A$-polynomial of the
unknot is $1$ and  $y-1$ may possibly occur as a factor in $A_\sk(x,y)$
for certain knots $K$  contributed by some component of the character variety of the knot exterior
containing characters of irreducible
representations; while in the original version $y-1$ is a factor
of the $A$-polynomial for every knot contributed by the unique component
of the character variety of the knot exterior consisting of characters of reducible representations (see Section~\ref{Apoly} for details). The current version contains a bit more information than the original one.

\vspace{5pt}\noindent{\bf Acknowledgements.}\quad The first author was
partially supported by NSF grant
numbers DMS-1103976, DMS-1252992, and an Alfred P. Sloan Research Fellowship.


\section{Some properties of $A$-polynomials}\label{Apoly}

First we need to recall some background material on $A$-polynomials and set up some notations.
For a finitely generated group $\G$,  $R(\G)$ denotes the set of representations
(i.e. group homomorphisms) from  $\G$ to $SL_2(\c)$.
For each representation $\r\in R(\G)$, its character $\chi_\r$ is the complex valued
function
$\chi_\r:\G\ra \c$ defined by $\chi_\r(\g)=trace(\r(\g))$ for $\g\in \G$.
Let $X(\G)$ be the set of characters of representations in $R(\G)$
and $\displaystyle t:R(\G)\ra X(\G)$ the map sending $\r$ to $\chi_\r$.
Then both $R(\G)$ and $X(\G)$ are complex  affine
algebraic sets such that $t$ is a regular map (see \cite{CS} for details).

For an element $\g\in \G$, the function
$f_\g:X(\G)\ra \c$ is defined by $f_\g(\chi_\r)=(\chi_\r(\g))^2-4$ for each $\chi_\r\in X(\G)$.
Each  $f_\g$ is a regular function on $X(\G)$.
Obviously $\chi_\r\in X(\G)$ is a zero point of $f_\g$ if and only if either
$\r(\g)=\pm I$ or $\r(\g)$ is a parabolic element. It is also evident that $f_\g$ is invariant when $\g$ is replaced by  a conjugate of $\g$ or the inverse of $\g$.

Note that if
 $\displaystyle\phi:\G\ra \G'$ is a group homomorphism between two finitely generated groups,
 it naturally induces a regular map $\displaystyle\widetilde{\phi}: R(\G')\ra R(\G)$
 by $\displaystyle\widetilde{\phi}(\r')=\r'\circ\phi$
 and a regular map $\displaystyle\widehat{\phi}: X(\G')\ra X(\G)$ by
$\displaystyle\widehat{\phi}(\chi_{\r'})=\chi_{\widetilde{\phi}(\r')}$.
Note that if $X_0$ is an irreducible subvariety of $X(\G')$,
then the Zariski closure of $\displaystyle\widehat{\phi}(X_0)$ in $X(\G)$
is also irreducible.
 If in addition  the homomorphism $\phi$ is surjective, each of the regular maps
   $\displaystyle\widetilde{\phi}$ and $\displaystyle\widehat{\phi}$ is an embedding,
   in which case we may simply
consider $R(\G')$ and $X(\G')$ as subsets of $R(\G)$ and $X(\G)$ respectively,
and write $R(\G')\subset R(\G)$ and $X(\G')\subset X(\G)$.

For a compact manifold $W$, we use $R(W)$ and $X(W)$ to denote
$R(\pi_1(W))$ and $X(\pi_1(W))$ respectively.

The $A$-polynomial was introduced in \cite{CCGLS}. We slightly modify
its original definition for a knot $K$
in $S^3$ as follows. Let $M_\sk$ be the exterior of $K$ in $S^3$
and let $\{\m, \l\}$ be the standard meridian-longitude basis for $\pi_1(\p M_\sk)$.
Let $\widehat{i}_*: X(M_\sk)\ra X(\p M_\sk)$ be the regular map induced by the
inclusion induced homomorphism $i_*:\pi_1(\p M_\sk)\ra\pi_1(M_\sk)$, and let $\Lambda$ be the set of diagonal representations
of $\pi_1(\p M_\sk)$, i.e.
$$\displaystyle\Lambda=\{\r\in R(\p M_\sk); \;\; \mbox{$\r(\m)$, $\r(\l)$ are both diagonal matices}\}.$$
Then $\Lambda$ is a subvariety of $R(\p M_\sk)$ and
$\displaystyle t|_\Lambda:\Lambda\ra X(\p M_\sk)$ is a degree $2$, surjective, regular map.
We may identify $\Lambda$ with $\c^*\times \c^*$ through the eigenvalue  map
$\displaystyle E:\Lambda\ra \c^*\times \c^*$ which sends $\r\in \Lambda$ to
$\displaystyle (x,y)\in \c^*\times \c^*$ if $\displaystyle\r(\m)=\left(\begin{array}{cc}x&0\\0&x^{-1}
\end{array}\right)$ and $\displaystyle\r(\l)=\left(\begin{array}{cc}y&0\\0&y^{-1}
\end{array}\right)$.
For every knot in $S^3$, there is a unique component in $X(M_\sk)$
 consisting of  characters of reducible representations, which we call the trivial component of $X(M_\sk)$ (The trivial component is of dimensional one).
Now let $X^*(M_\sk)$ be the set of nontrivial components of $X(M_\sk)$ each of which  has
a $1$-dimensional image in
$X(\p M_\sk)$ under the map $\widehat{i}_*$.
The set $X^*(M_\sk)$ is possibly empty, and in fact with the current knowledge it is known
that $X^*(M_\sk)$ is empty if and only if $K$ is the unknot.
So when $K$ is a nontrivial knot,
$\displaystyle (t|_\Lambda)^{-1}(\widehat{i}_*(X^*(M_\sk))$ is $1$-dimensional in $\Lambda$
and in turn $\displaystyle E((t|_\Lambda)^{-1}(\widehat{i}_*(X^*(M_\sk)))$ is $1$-dimensional in
$\c^*\times \c^*\subset \c\times\c$.
Let $D$ be the Zariski closure of $\displaystyle E((t|_\Lambda)^{-1}(\widehat{i}_*(X^*(M_\sk)))$ in $\c^2$.
Then $D$ is a plane curve in $\c^2$ defined over $\q$.
Let $\displaystyle A_\sk(x,y)$ be the defining polynomial
of $D$ normalized so that  $A_\sk(x,y)\in \z[x,y]$ with no repeated factors and with $1$ as the
greatest common divisor of its coefficients.
Then $A_\sk(x,y)$ is uniquely defined  up to sign.
For the unknot we define its $A$-polynomial to be $1$.
As remarked in the introduction section,
 $y-1$ might occur as a factor of $A_\sk(x,y)$ for certain knots.
Also by \cite{BZ} \cite{DG}, $A_\sk(x,y)=1$ if and only if
$K$ is the unknot (in fact for every nontrivial knot $K$, $A_\sk(x,y)$
contains a nontrivial factor which is not $y-1$).

We note that from the constructional definition of the $A$-polynomial
we see that each component $X_0$ of $X^*(M_\sk)$ contributes a factor
$f_0(x,y)$ in $A_\sk(x,y)$, i.e.
$f_0(x,y)$ is the defining polynomial of the plane curve
$D_0$ which is the Zariski closure of $E((t|_\Lambda)^{-1}(\widehat{i}_*(X_0)))$ in $\c^2$,
and moreover $f_0(x,y)$ is balanced, i.e. if $(x,y)$ is a generic zero point of $f_0(x,y)$
 then $(x^{-1},y^{-1})$ is also a zero point of $f_0(x,y)$.
 Also note that $f_0(x,y)$ is not necessarily irreducible over $\c$
 but contains at most two irreducible factors over $\c$.
 We shall call such  $f_0(x,y)$ a
 balanced-irreducible factor of $A_\sk(x,y)$.
 Obviously $A_\sk(x,y)$ is a product of balanced-irreducible factors
 and the product decomposition is unique up to the ordering of the factors.

We now define  a couple of functions which will  be convenient
to use in  expressing the $A$-polynomials for torus knots and later on for
cabled knots and iterated torus knots.
Let $(p, q)$ be a pair of relative prime integers with $q\geq 2$. Define $F_{(p,q)}(x,y)\in \z[x,y]$ to be the polynomial
determined by the pair $(p,q)$ as follows:
\begin{equation}\label{polynomial of (p,q)}
F_{(p,q)}(x,y)= \left\{\begin{array}
 {ll}1+x^{2p}y,\;\;&  \mbox{if $q=2$, $p>0$,}\\
x^{-2p}+y,\;\;  &\mbox{if $q=2$,  $p<0$,}\\
-1+x^{2pq}y^2,\;\;&  \mbox{if $q>2$, $p>0$,}\\
-x^{-2pq}+y^2,\;\; & \mbox{if $q>2$, $p<0$}\end{array}\right.
\end{equation}
and define $G_{(p,q)}(x,y)\in \z[x,y]$ to be the polynomial
determined by the pair $(p,q)$ as follows:
\begin{equation}\label{G polynomial of (p,q)}
G_{(p,q)}(x,y)= \left\{\begin{array}{ll}
 -1+x^{pq}y,\;\;&  \mbox{if  $p>0$,}\\
-x^{-pq}+y,\;\; & \mbox{if  $p<0$.}\end{array}\right.
\end{equation}

Note that the ring $\c[x,y]$ is a unique factorization domain. The following lemma can be easily
checked.
\begin{lem}\label{lem:IrrFac}
Among the polynomials in (\ref{polynomial of (p,q)}) and (\ref{G polynomial of (p,q)}), the first two in (\ref{polynomial of (p,q)}) and the two in (\ref{G polynomial of (p,q)})
are irreducible over $\c$, and the last two in (\ref{polynomial of (p,q)}) can be factorized as the product of two irreducible polynomials over $\c$:
$$
\begin{array}{rcll}
-1+x^{2pq}y^2&=&(-1+x^{pq}y)(1+x^{pq}y),\;\;&  \mbox{if $q>2$, $p>0$,}\\
-x^{-2pq}+y^2&=&(-x^{-pq}+y)(x^{-pq}+y),\;\; & \mbox{if $q>2$, $p<0$.}\end{array}
$$
\end{lem}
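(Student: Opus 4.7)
The plan is to verify each claim by direct inspection, using the fact that $\c[x,y]$ is a UFD and the standard criterion that a polynomial of the form $a(x) + b(x)y \in \c[x,y]$ with $\gcd(a(x), b(x)) = 1$ in $\c[x]$ is irreducible over $\c$ (being degree one in $y$ with unit content).

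First I would handle the four polynomials claimed to be irreducible. Each of $1+x^{2p}y$ (with $p>0$), $x^{-2p}+y$ (with $p<0$), $-1+x^{pq}y$ (with $p>0$), and $-x^{-pq}+y$ (with $p<0$) is linear in $y$; the two coefficients (as elements of $\c[x]$) are a nonzero monomial in $x$ and a nonzero constant, which are coprime in $\c[x]$. Therefore each is irreducible in $\c[x,y]$.

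Next I would handle the two polynomials that split. For $q > 2$ and $p > 0$, apply the difference-of-squares identity
\[
-1 + x^{2pq}y^2 = (x^{pq}y)^2 - 1 = (x^{pq}y - 1)(x^{pq}y + 1),
\]
and for $q > 2$ and $p < 0$ apply
\[
-x^{-2pq} + y^2 = y^2 - (x^{-pq})^2 = (y - x^{-pq})(y + x^{-pq}).
\]
Each factor in either product is again linear in $y$ with coprime coefficient polynomials in $\c[x]$, hence irreducible by the same criterion. Since $\c[x,y]$ is a UFD, these two factorizations are the claimed decompositions into irreducibles.

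There is really no substantive obstacle: the only thing to watch is the sign of $p$, which merely governs whether the nontrivial monomial sits in front of the constant term or in front of $y$, and the convention that ensures the exponents appearing (namely $-2p$, $-pq$, $-2pq$) are positive whenever $p$ is negative, so that everything is genuinely a polynomial in $x$.
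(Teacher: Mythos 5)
Your proof is correct and is exactly the routine verification the paper has in mind: the paper offers no written proof beyond noting that $\c[x,y]$ is a UFD and remarking that the lemma "can be easily checked," and your check (degree one in $y$ with coprime coefficients in $\c[x]$ for irreducibility, difference of squares for the two factorizations) is the intended argument. No gaps.
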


The set of nontrivial torus knots $T(p,q)$
are naturally indexed by pairs $(p,q)$ satisfying $|p|>q\geq 2$, $(p,q)=1$.
Note that $T(-p,q)$ is the mirror image of $T(p,q)$.
The $A$-polynomial
of a torus knot $T(p,q)$  is given by
(e.g. \cite[Example 4.1]{Z}):
\begin{equation}\label{Apoly of T(p,q)}
A_{T(p,q)}(x,y)= F_{(p,q)}(x,y)
\end{equation}
In particular the $A$-polynomial distinguishes $T(p,q)$ from $T(-p, q)$.

For the exterior $M_\sk$ of a nontrivial knot in $S^3$,
 $H_1(\p M_\sk;\z)\cong\pi_1(\p M_\sk)$ can be considered as a subgroup
 of $\pi_1(M_\sk)$ which is well defined up to conjugation.
 In particular, the function $f_\a$ on $X(M_\sk)$ is well defined for each class $\a\in H_1(\p M_\sk;\z)$.
 As $f_\a$ is also invariant under the change of the orientation of $\a$,
 $f_\a$ is also well defined when $\a$ is a slope in $\p M_\sk$.
 Later on for convenience we  will often not make a distinction
 among  a primitive class of $H_1(\p M_\sk;\z)$, the corresponding
  element of $\pi_1(\p M_\sk)$ and the corresponding slope in $\p M_\sk$,
  so long as it is well defined.

 It is known (e.g. \cite{BZ4}) that any irreducible curve  $X_0$ in $X(M_\sk)$
  belongs to one of the following
 three mutually exclusive types:
\newline
 (a) for each slope $\a$ in $\p M_\sk$, the function $f_\a$ is non-constant on $X_0$;
\newline
 (b) there is a unique slope $\a_0$ in $\p M_\sk$ such that the function $f_{\a_0}$ is constant on $X_0$;
\newline
(c) for each slope $\a$ in $\p M_\sk$, the function  $f_\a$ is constant on $X_0$.
\newline
Obviously a curve   of
type (a) or (b) has one dimensional image in $X(\p M_\sk)$ under the map
$\widehat i_*$.
Note that the trivial component of $X(M_\sk)$ is of type (b).
Hence a curve of type (a) is contained in $X^*(M_\sk)$
and so is a curve of type (b) if it is not the trivial component
 of $X(M_\sk)$.

An irreducible curve  in case (a) is named a {\it norm curve}.
 Indeed as the name indicates,  a  norm curve in $X(M_\sk)$  can be used to define  a  norm, known as  Culler-Shalen  norm,  on the real $2$-dimensional
 plane $H_1(\p M_\sk; \mathbb R)$ satisfying certain properties.
 Such curve  exists when $M_\sk$ is hyperbolic, namely any component of $X(M_\sk)$
  which contains the character of a discrete faithful representation
 of $\pi_1(M_\sk)$ is a norm curve.

For an irreducible curve  $X_0$ in $X(M_\sk)$, let $\tilde X_0$ be the smooth projective
completion of $X_0$ and let $\phi:\tilde X_0\ra X_0$ be the birational
isomorphism.
The map $\phi$ is onto and is defined at all but finitely many points of $\tilde X_0$.
The points of $\tilde X_0$ where $\phi$  is not defined are called ideal points and all other points of $\tilde X_0$ are called regular points.
The  map $\phi$ induces an isomorphism
from  the function field of $X_0$ to that of $\tilde X_0$. In particular
every regular function $f_\g$ on $X_0$
corresponds uniquely to its extension $\tilde f_\g$ on $\tilde X_0$  which is a rational function.
 If $\tilde f_\g$ is not a constant function on $\tilde X_0$, its degree, denoted $\deg( \tilde f_\g)$,
 is equal to the number of zeros of $\tilde f_\g$ in $\tilde X_0$ counted with multiplicity, i.e.
 $$\deg(\tilde f_\g)=\sum_{v\in \tilde X_0} Z_v(\tilde f_\g),$$
where $Z_v(\tilde f_\g)$ is the zero degree of
$\tilde f_\g$ at point $v\in \tilde X_0$.

We shall identify $H_1(\p M_\sk; \mathbb R)$ with the real $xy$-plane
so that $H_1(\p M_\sk;\mathbb Z)$ are integer lattice points
with $\mu=(1,0)$ being the meridian class and $\l=(0,1)$ the longitude class.
So each slope $m/n$ corresponds to the pair of primitive elements
$\pm (m,n)\in H_1(\p M_\sk;\z)$.

\begin{thm}\label{properties of a norm curve}
 Let $X_0$ be a norm curve  of $X(M_\sk)$.
 Then the associated Culler-Shalen norm  $\|\cdot\|_0$ on $H_1(\p M_\sk;\mathbb R)$
 has the following properties: let $$s_0=\min\{\|\a\|_0; \a\ne 0, \a\in H_1(\p M_\sk; \z)\}$$
and let $B_0$ be the disk in $H_1(\p M_\sk;\mathbb R)$ with radius $s_0$ with
respect to the norm $\|\cdot\|_0$, then

(1) For each
nontrivial element $\a=(m,n)\in H_1(\p M_\sk;\z)$,
 $\|\a\|_0=\deg(\tilde f_\a)\ne 0$ and thus $\|\a\|_0=\|-\a\|_0$.
 \newline
(2) The disk $B_0$ is a convex finite sided polygon symmetric to the origin
whose interior does not contain any non-zero element of $H_1(\p M_\sk;\z)$
and whose boundary contains at least one but at most four
nonzero classes of $H_1(\p M_\sk;\z)$ up to sign.
\newline
(3)
If $(a,b)$ is a vertex of $B_0$, then there is a boundary slope $m/n$ of $\p M_\sk$
 such that $\pm (m,n)$ lie in the line passing through $(a,b)$ and $(0,0)$.
 (That is, $a/b$ is a boundary slope of $\p M_\sk$ for any vertex $(a,b)$ of $B_0$).
  \newline
(4)
If a primitive  class $\a=(m,n)\in H_1(\p M_\sk;\z)$ is not a boundary class
and $M_\sk(\a)$ has no noncyclic representations, then $\a=(m,n)$ lies in $\p B$
(i.e. $\|\a\|_0=s_0$)  and is not
a vertex of $B_0$.
\newline
(5) If the meridian class $\m=(1,0)$ is not a boundary class,
 then for any non-integral class $\a=(m,n)$ if it is not a vertex of $B_0$
 then it does not lie in $\p B$ and thus $\|\a\|_0>\|\m\|_0=s_0$.
\end{thm}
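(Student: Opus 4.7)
The plan is to construct the norm explicitly as a sum of seminorms coming from ideal points, and then read off all five statements from the structure of the resulting polygon. First, for each primitive class $\a \in H_1(\p M_\sk;\z)$, set $\|\a\|_0 = \deg(\tilde f_\a)$. Because $X_0$ is a norm curve (type (a)), every $f_\a$ is non-constant on $X_0$, so $\tilde f_\a$ is a non-constant rational function on $\tilde X_0$ and hence $\deg(\tilde f_\a)>0$. The symmetry $\|\a\|_0=\|-\a\|_0$ in (1) is immediate from $f_\a=f_{-\a}$, which comes from $\mathrm{tr}(A)=\mathrm{tr}(A^{-1})$ for $A\in SL_2(\c)$. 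I would then extend $\|\cdot\|_0$ to $H_1(\p M_\sk;\mathbb R)$ by homogeneity, postponing the triangle inequality until after the ideal-point formula is available.

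The key technical step is the Culler--Shalen formula expressing $\|\a\|_0$ as a sum of contributions from the ideal points of $\tilde X_0$. To each ideal point $x$ one associates an action of $\pi_1(M_\sk)$ on a simplicial tree and, via Bass--Serre theory, an essential surface in $M_\sk$ whose boundary slope $r_x$ is well defined. A standard valuation-theoretic computation then shows that the pole order of $\tilde f_\a$ at $x$ equals $\Pi_x\cdot \Delta(\a,r_x)$ for some non-negative integer $\Pi_x$ depending only on $x$, where $\Delta$ denotes intersection number on $\p M_\sk$. Summing pole orders over ideal points and invoking $\deg(\tilde f_\a)=\sum_x \Pi_x(\tilde f_\a)$ gives
\[
\|\a\|_0 \;=\; \sum_{x \text{ ideal}} \Pi_x\,\Delta(\a,r_x).
\]
Each summand is a seminorm, and at least one $\Pi_x$ is positive on any primitive class (because $X_0$ is a norm curve), so the sum is a genuine norm; this secures both the triangle inequality and the $\deg(\tilde f_\a)=\|\a\|_0$ identity in (1).

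For (2), $B_0$ is now the intersection of finitely many slabs $\{|\Delta(\cdot,r_x)|\le c_x\}$, hence a convex finite-sided polygon symmetric about the origin. Its interior contains no nonzero lattice point by definition of $s_0$. The bound of at most four boundary classes up to sign follows from a Minkowski-type count: a centrally symmetric convex polygon with more than eight boundary lattice points must enclose a nonzero interior lattice point. Part (3) is then a direct reading of the formula: each vertex of $B_0$ lies on the kernel line of one of the factors $\Delta(\cdot,r_x)$, i.e. on the line through the origin in the direction of the boundary slope $r_x$.

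For (4), the standing hypothesis that $M_\sk(\a)$ has no noncyclic representations forces any character $\chi_\r\in X_0$ with $\r(\a)=\pm I$ to be reducible, since otherwise $\r$ would descend to a noncyclic representation of $\pi_1(M_\sk(\a))$ or its quotient by $\pm I$. Together with the fact that $\a$ is not a boundary slope, this lets one show that the pole contributions from ideal points and the zero contributions from regular points are exactly balanced at the minimum value $s_0$, so $\a\in\p B_0$. Part (3) then forbids $\a$ from being a vertex, since any vertex lies on a boundary-slope ray. Finally, (5) follows by applying (4) to $\m=(1,0)$, giving $\|\m\|_0=s_0$ and placing $\m$ in the relative interior of some edge of $B_0$; if a non-integral primitive class $\a=(m,n)$ with $|n|\ge 2$ also sat on $\p B_0$ without being a vertex, then $\m$ and $\a$ would crowd the edges of the symmetric polygon with enough lattice points to trap a nonzero lattice point in the interior, contradicting (2). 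The main obstacle I expect is step (4): carefully bookkeeping the zeros and poles of $\tilde f_\a$ to pin down $\|\a\|_0=s_0$ requires a delicate use of the no-noncyclic-representation hypothesis, and is where the bulk of the technical work resides.
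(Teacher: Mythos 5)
The paper itself does not prove this theorem; it is imported verbatim from \cite[Chapter~1]{CGLS} and \cite{BZ4}, so the only meaningful comparison is with the argument in those references. Your sketch follows exactly that route: set $\|\alpha\|_0=\deg(\tilde f_\alpha)$, decompose the norm over ideal points as $\sum_x \Pi_x\,\Delta(\cdot,r_x)$ via the valuation/Bass--Serre machinery, and deduce (2)--(5) from the resulting piecewise-linear structure, a geometry-of-numbers count, and the pointwise comparison of zero degrees $Z_v(\tilde f_\alpha)\le Z_v(\tilde f_\beta)$ under the no-noncyclic-representation hypothesis. As an outline this is sound, and you correctly locate the technical weight in part (4) (this is \cite[Proposition~1.5.2]{CGLS}, quoted elsewhere in the paper as \cite[Proposition~4.8]{BZ4}).

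Two places need repair before this would compile into a proof. First, the ball of $\sum_x \Pi_x\,\Delta(\cdot,r_x)$ is \emph{not} the intersection of slabs $\{|\Delta(\cdot,r_x)|\le c_x\}$: that set is the ball of the \emph{maximum} of the seminorms, and its vertices sit at intersections of pairs of slab boundaries rather than on the rays through the $r_x$ (compare the square $[-1,1]^2$ with the diamond $|u|+|v|\le1$). Read literally, your description of $B_0$ would contradict the statement (3) you then derive from it. The correct observation is that the sum is linear on each closed cone cut out by the lines $\Delta(\cdot,r_x)=0$, so $\partial B_0$ is a single segment over each cone and vertices can only occur on those lines; that is what (3) actually uses. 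Second, the lattice-point arguments in (2) and (5) are too coarse as stated: Minkowski with area $>4$ only disposes of $\Delta(\mu,\alpha)\ge3$, and the borderline case $\Delta(\mu,\alpha)=2$ in (5) requires noting that $\tfrac12(\mu+\alpha)$ and $\tfrac12(\alpha-\mu)$ are themselves lattice points lying on open chords of $B_0$, hence interior unless the endpoints share an edge --- a subcase that must then be killed separately using central symmetry. These refinements are exactly the content of \cite[Section~1.1]{CGLS}; with them supplied, your proposal reproduces the cited proof.
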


Theorem~\ref{properties of a norm curve} is originated
from
\cite[Chatper 1]{CGLS} although it was assumed there that the curve $X_0$
contains the character of a discrete faithful representation of $M_\sk$.
The version given here is contained in \cite{BZ4}.

Recall that  if $f_0(x,y)=\sum a_{i,j}x^iy^j\in \c[x,y]$ is a two variable polynomial
 in $x$ and $y$ with complex coefficients,
the  Newton polygon $N_0$ of $f_0(x,y)$ is defined to be the
convex hull in the real $xy$-plane of the set of points
$$\{(i,j); a_{i,j}\ne 0\}.$$

The following theorem  is proved in
\cite{BZ2}.
\begin{thm}\label{dual polygons}
Let $X_0$ be a norm curve   of $X(M_\sk)$ and let $f_0(x,y)$ be the balanced-irreducible factor of $A_\sk(x,y)$ contributed
by $X_0$.
Then the norm polygon $B_0$ determined by $X_0$ is dual to the Newton polygon $N_0$ of $f_0(x,y)$
in the following way: the set of
slopes of vertices  of
$B_0$ is equal to the set of slopes of edges of $N_0$.
 In fact $B_0$ and $N_0$ mutually determine each other up a positive integer
 multiple.
\end{thm}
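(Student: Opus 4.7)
The plan is to compute the Culler--Shalen norm by passing to the smooth completion $\tilde X_0$, writing $f_\a$ as the square of an explicit rational function, and then using Newton--polygon/toric data of $f_0(x,y)$ to read off the pole divisor. By part (1) of Theorem~\ref{properties of a norm curve}, $\|\a\|_0=\deg(\tilde f_\a)$ for $\a=(m,n)\in H_1(\p M_\sk;\z)$. Parametrizing a generic character of $X_0$ by the eigenvalue pair $(x,y)\in\c^*\times\c^*$ on $f_0(x,y)=0$, the trace of $\a$ is $x^my^n+x^{-m}y^{-n}$, so
$$f_\a=\bigl(x^my^n+x^{-m}y^{-n}\bigr)^2-4=\bigl(x^my^n-x^{-m}y^{-n}\bigr)^2=:g_\a^2,$$
and hence $\|\a\|_0=2\deg(\tilde g_\a)$. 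Since $x,y$ are regular and nonvanishing on the affine curve $D_0$, the rational function $g_\a$ on $\tilde X_0$ has poles only at ideal points, and $\deg(\tilde g_\a)$ equals the total pole order there.

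Next I would compute these pole orders using the Newton polygon $N_0$. Compactify $\c^*\times\c^*$ to the toric surface whose fan is the normal fan of $N_0$. Each edge $E$ of $N_0$, with primitive outward normal $(a_E,b_E)$ and lattice length $\ell_E$, corresponds to a torus--invariant divisor, and the closure of $D_0$ meets this divisor (after resolving any singularity at infinity) in a total of $\ell_E$ points counted with multiplicity; at each such ideal point the valuations of the coordinate functions take the form $(v(x),v(y))=-k_v(a_E,b_E)$, with $\sum_v k_v=\ell_E$, because the dominant terms of $f_0$ on the face selected by $(a_E,b_E)$ must balance. Pulling back through the finite covering $\tilde X_0\to\widetilde{D_0}$, the two terms of $g_\a$ have opposite valuations, so $v(g_\a)=-k_v|ma_E+nb_E|$, and summing over all ideal points yields
$$\|\a\|_0\;=\;c\sum_{E\text{ edge of }N_0}\ell_E\,|ma_E+nb_E|$$
for a positive integer $c$ determined by the covering degree.

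From this formula the duality is immediate. The right--hand side is a homogeneous, piecewise linear, convex function of $(m,n)$, linear on each sector cut out by the lines $ma_E+nb_E=0$, so its unit ball $B_0$ is a convex polygon whose vertices lie along those lines. The line $ma_E+nb_E=0$ has primitive direction $(-b_E,a_E)$, which is precisely the primitive direction of the edge $E$ of $N_0$, so every vertex of $B_0$ has the same slope as the corresponding edge of $N_0$; scaling either polygon by a positive integer preserves this correspondence, giving the ``up to positive integer multiple'' clause. The main obstacle will be the toric bookkeeping in the second paragraph---identifying the ideal points of $\tilde X_0$ with the intersections of the toric closure of $D_0$ with the boundary divisors and verifying the multiplicity $\ell_E$ (a Bernstein-type count), and correctly tracking the covering degree $c$ when $f_0$ splits over $\c$ as in Lemma~\ref{lem:IrrFac}. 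The balanced symmetry $(x,y)\leftrightarrow(x^{-1},y^{-1})$ automatically forces $N_0$ to be centrally symmetric and $B_0$ to be symmetric about the origin, matching property (1) of Theorem~\ref{properties of a norm curve}.
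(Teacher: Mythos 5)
Your proposal is essentially a reconstruction of the argument that the paper itself does not give: the paper simply cites \cite{BZ2} for this theorem and only records the normalization, namely that the Newton polygon defines a width function $w$ with $\|\cdot\|_0=2d_0\|\cdot\|_{N_0}$, where $d_0$ is the degree of $\widehat{i}_*:X_0\to Y_0$. Your route --- writing $f_\a=(x^my^n-x^{-m}y^{-n})^2$ on the eigenvalue curve, locating the poles at ideal points, and reading the valuations $(v(x),v(y))$ off the outward normals of the Newton polygon so that $\|\a\|_0=c\sum_E\ell_E|ma_E+nb_E|$ --- is exactly the mechanism behind the Boyer--Zhang/Shanahan proof, and your formula is the edge-normal form of their width function (the number of lattice lines of slope $q/p$ meeting $N_0$, minus one, is the lattice width of $N_0$ in the conormal direction, which is $\frac12\sum_E\ell_E|\cdot|$). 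The duality between the unit ball of such a piecewise-linear norm and the polygon carrying the edge data is then immediate, as you say. So I would classify this as correct and in substance the same approach, with the added value that you actually derive the formula rather than cite it.

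One point needs repair. You speak of ``pulling back through the finite covering $\tilde X_0\to\widetilde{D_0}$,'' but no such covering exists in general: the maps go $D_0\to Y_0$ (generically degree $2$, via $t|_\Lambda$ and $E$) and $X_0\to Y_0$ (degree $d_0$), so $x^my^n$ and hence $g_\a$ are \emph{not} functions on $X_0$; only the symmetric combination $f_\a$ descends to $Y_0$. The correct bookkeeping is $\deg_{\tilde X_0}(\tilde f_\a)=d_0\deg_{\tilde Y_0}(f_\a)=\tfrac{d_0}{2}\deg_{\widetilde{D_0}}(f_\a)=d_0\deg_{\widetilde{D_0}}(g_\a)$, carried out on $\widetilde{D_0}$ where $g_\a$ does live, and then transported to $\tilde X_0$ through $Y_0$; this produces $c=d_0$ in your formula, consistent with the paper's $\|\cdot\|_0=2d_0\|\cdot\|_{N_0}$. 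With that correction, and granting the standard Puiseux/toric count $\sum_v k_v=\ell_E$ that you flag as the remaining bookkeeping, the outline is sound.
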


We remark that although in \cite{BZ2} there was some additional condition
imposed on $X_0$ and the version of the $A$-polynomial defined in \cite{BZ2} is
mildly different from the  one as given here,
the above theorem remains valid with identical reasoning as given in \cite{BZ2}.
We only need to describe the exact relation
between $B_0$ and $N_0$ as follows, to see how they determine each other up to
an integer multiple.
Let $Y_0$ be the Zariski closure of  the restriction $\widehat{i}_*(X_0)$ of
$X_0$ in  $X(\p M_\sk)$ and let $d_0$ be the degree of the map $\widehat{i}_*:X_0\ra Y_0$.
As explained in \cite{BZ2} (originated from \cite{Shan}), the Newton polygon $N_0$ determines  a width
function $w$ on the set of slopes given by
$$w(p/q)=k\in \z$$
if $k+1$ is the number of lines in the $xy$-plane of the slope $q/p$
which contain points of both $\z^2$ and $N_0$. The width function
in turn defines a norm
$\|\cdot\|_{N_0}$ on the $xy$-plane $H_1(\p M_\sk;\mathbb R)$ such that
$$\|(p,q)\|_{N_0}=w(p/q)$$ for each primitive class $(p,q)\in H_1(\p M_\sk; \z)$.
Finally $$\|\cdot\|_0=2d_0\|\cdot\|_{N_0}.$$

\begin{cor}\label{non-hyperbolic}
If every balanced-irreducible factor of $A_\sk(x,y)$ over $\c$
has two monomials, then $K$ is not a hyperbolic knot.
\end{cor}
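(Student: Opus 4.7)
The plan is to argue by contradiction. Suppose $K$ is hyperbolic; then as observed right after the definition of norm curve, $X(M_\sk)$ contains a norm curve $X_0$, namely any component carrying the character of a discrete faithful representation of $\pi_1(M_\sk)$. Because $X_0$ is of type (a), it lies in $X^*(M_\sk)$ and therefore contributes a balanced-irreducible factor $f_0(x,y)$ to $A_\sk(x,y)$. By the hypothesis, $f_0$ has exactly two monomials, say
$$f_0(x,y)=a\,x^{i_1}y^{j_1}+b\,x^{i_2}y^{j_2},\qquad (i_1,j_1)\ne (i_2,j_2),$$
so its Newton polygon $N_0$ is the line segment joining the lattice points $(i_1,j_1)$ and $(i_2,j_2)$, with primitive direction proportional to $(\Delta i,\Delta j):=(i_2-i_1,j_2-j_1)\ne 0$.

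Next I would unpack the width-function description of the Culler--Shalen norm $\|\cdot\|_0$ given just after Theorem~\ref{dual polygons}. For a primitive $(p,q)\in\z^2$, the lines of slope $q/p$ meeting both $N_0$ and $\z^2$ are in bijection with the integer values attained by the linear functional $(x,y)\mapsto qx-py$ on $N_0$; since both endpoints of $N_0$ are integer lattice points, that count equals $|q\,\Delta i-p\,\Delta j|+1$. Hence $w(p/q)=|q\,\Delta i-p\,\Delta j|$, and the associated Culler--Shalen norm satisfies $\|(p,q)\|_0=2d_0\,|q\,\Delta i-p\,\Delta j|$.

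Taking $(p,q)$ to be a primitive integer vector parallel to $(\Delta i,\Delta j)$ now forces $\|(p,q)\|_0=0$ on a nonzero class of $H_1(\p M_\sk;\z)$, which directly contradicts part (1) of Theorem~\ref{properties of a norm curve}. This contradiction rules out the hyperbolicity of $K$.

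There is essentially no serious obstacle here, since the argument only assembles the existence of a norm curve for hyperbolic $M_\sk$, the duality of Theorem~\ref{dual polygons}, and the strict positivity statement of Theorem~\ref{properties of a norm curve}(1). The one subtlety worth flagging is that the width formula continues to make sense in the degenerate case $\dim N_0=1$; there it defines only a seminorm, vanishing along the line carrying $N_0$, which is precisely how the contradiction arises. Equivalently, from the dual-polygon picture: a segment $N_0$ has a single edge-slope, so its dual $B_0$ could have vertices along only one line through the origin, whereas $B_0$ must be a genuine $2$-dimensional convex polygon by Theorem~\ref{properties of a norm curve}(2).
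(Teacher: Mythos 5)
Your proposal is correct and takes essentially the same route as the paper: the paper also argues that a hyperbolic $K$ has a norm curve whose balanced-irreducible factor must have a Newton polygon with at least two edges of different slopes, which a two-monomial factor (a single-segment Newton polygon) cannot satisfy. Your explicit width-function computation showing that a segment forces $\|\cdot\|_0$ to vanish on a nonzero class of $H_1(\p M_\sk;\z)$, contradicting Theorem~\ref{properties of a norm curve}(1), is just the unpacked justification of the paper's one-line appeal to the duality in Theorem~\ref{dual polygons}.
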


\pf The condition of the corollary
means that the Newton polygon of each balanced-irreducible factor of $A_\sk(x,y)$
consists of a single edge. On the other hand  for a hyperbolic knot,
its character variety contains a norm curve component $X_0$ which contributes
 a balanced-irreducible factor $f_0(x,y)$ to the $A$-polynomial such that
 the Newton polygon of $f_0(x,y)$ has at least two edges of different slopes.
\qed

An irreducible curve in $X^*(M_\sk)$ of type (b) (such curve  exists only for certain knots)
 is named a {\it semi-norm
curve} as suggested by the following theorem which is contained in \cite{BZ4}.
\begin{thm}\label{properties of a semi-norm curve}
Suppose that $X_0\subset X^*(M_\sk)$ is  an irreducible curve of type (b)  with   $\a_0$ being
 the unique slope such that $f_{\a_0}$ is constant on $X_0$. Then a semi-norm $\|\cdot\|_0$ can be defined on $H_1(\p M_\sk;\mathbb R)$, with the following properties:
\newline
(1) For each slope  $\a\ne \a_0$, $\|\a\|_0=\deg(\tilde f_\a)\ne 0$.
\newline
(2) For the unique slope $\a_0$ associated to $X_0$, $\|\a_0\|_0=0$ and $\a_0$ is a boundary slope  of $M_\sk$.
\newline
(3) If $\a$ is a primitive class and is not a boundary class
 and $M_\sk(\a)$ has no non-cyclic representation, then
 $\D(\a,\a_0)=1$.
 \newline
 (4) Let $s_0=min\{\|\a\|_0;\;\mbox{$\a\ne \a_0$ is a slope}\}$.
 Then for any slope $\a$,
 $\|\a\|_0=s_0\D(\a,\a_0)$.
\end{thm}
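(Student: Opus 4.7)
The plan is to construct a direct analogue of the Culler--Shalen norm, replacing the norm by a semi-norm that degenerates precisely along the line spanned by $\a_0$. First I would set $\|\a\|_0 := \deg(\tilde f_\a)$ for every primitive $\a\in H_1(\p M_\sk;\z)$, with the convention that this value is $0$ exactly when $f_\a$ is constant on $X_0$, which by the definition of type~(b) means $\a = \pm\a_0$. To extend this assignment to a genuine semi-norm on $H_1(\p M_\sk;\mathbb{R})$, I would follow the strategy of Theorem~\ref{dual polygons} verbatim: let $Y_0 = \widehat{i}_*(X_0)$, $d_0 = \deg(\widehat{i}_*\co X_0\ra Y_0)$, lift $Y_0$ to $\L$ through $E$, extract the balanced factor $f_0(x,y)$ of $A_\sk$ contributed by $X_0$, and verify $\|\cdot\|_0 = 2d_0\|\cdot\|_{N_0}$. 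The new feature of the type~(b) setting is that the constancy of $f_{\a_0}$ on $X_0$ forces the lift of $Y_0$ in $\L$ to satisfy a monomial relation $x^P y^Q = c$ with $\a_0 = P\m + Q\l$, so the Newton polygon $N_0$ collapses to a segment parallel to $(P,Q)$; in particular $\|\cdot\|_{N_0}$ is already a semi-norm with kernel the line $\mathbb{R}\a_0$.

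With this in place, property~(1) and the equality $\|\a_0\|_0 = 0$ in~(2) are immediate. The claim that $\a_0$ is a genuine boundary slope of $M_\sk$ follows from the Culler--Shalen ideal-point construction applied to any ideal point $\x$ of $\tilde X_0$: since $f_{\a_0}$ extends to a regular (constant) function at $\x$, the essential surface produced by the associated action on a tree has bounded $\a_0$-trace and therefore carries boundary slope $\a_0$. Property~(4) is then a direct computation from the shape of $N_0$: a standard width-function calculation on a segment Newton polygon gives $\|\a\|_{N_0}$ proportional to $|aQ - bP| = \D(\a,\a_0)$ for $\a = a\m + b\l$, whence $\|\a\|_0 = s_0\,\D(\a,\a_0)$, with the proportionality constant equal to $s_0$ because $\D(\a,\a_0) = 1$ is attained by some primitive $\a$.

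For property~(3) I would run the classical CGLS analysis of zeros. Decompose $\deg(\tilde f_\a) = \sum_{v\in\tilde X_0} Z_v(\tilde f_\a)$ into ideal-point and regular-point contributions. Since $\a$ is not a boundary class, Culler--Shalen forbids any ideal point from contributing a zero of $\tilde f_\a$, for such a contribution would produce an essential surface in $M_\sk$ with boundary slope $\a$. At a regular point $v$ with $\tilde f_\a(v) = 0$, either $\r_v(\a) = \pm I$ or $\r_v(\a)$ is parabolic; the former together with irreducibility of $\r_v$ would descend to a non-abelian, hence non-cyclic, representation of $\pi_1(M_\sk(\a))$, contradicting the hypothesis, so $\r_v$ must in fact be reducible (the parabolic case being ruled out by a separate standard argument). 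Since the reducible locus in $X_0 \subset X^*(M_\sk)$ is zero-dimensional, the regular-point contribution is a priori bounded, and combining this bound with~(4) forces $\D(\a,\a_0) = 1$.

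The main technical obstacle I expect is the ideal-point analysis underlying both~(2) and~(3): one needs the delicate Culler--Shalen machinery to verify that an ideal point of a type~(b) curve produces an essential surface carrying exactly the predicted boundary slope $\a_0$, and symmetrically that no ideal point contributes a zero of $\tilde f_\a$ whenever $\a$ is not a boundary slope. The remainder of the proof is essentially formal manipulation of Newton polygons, degrees of rational functions on $\tilde X_0$, and the restriction map $\widehat{i}_*$.
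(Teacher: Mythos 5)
You should first note that the paper does not actually prove this theorem: it is stated as ``contained in \cite{BZ4}'', so your proposal can only be measured against the Boyer--Zhang source. For (1), (2) and (4) your outline reconstructs that argument essentially correctly: the seminorm is $\deg(\tilde f_\a)$, and its degeneracy exactly along $\mathbb{R}\a_0$ comes from the fact that the poles of $\tilde f_\a$ lie only at ideal points $x$, where the pole order is (twice) $|\Pi_x(\a)|$ for a linear functional $\Pi_x$ that vanishes on $\a_0$ because $\tilde f_{\a_0}$ is bounded there; since every $\Pi_x$ is then proportional to $\a\mapsto\D(\a,\a_0)$, one gets $\|\a\|_0=c\,\D(\a,\a_0)$ directly, without the Newton-polygon detour (note that Theorem~\ref{dual polygons} is stated in this paper only for norm curves, so you are extending it rather than quoting it). One precision for (2): the ideal point must be one at which some $\tilde f_\a$ has a pole --- such a point exists since $\deg(\tilde f_\a)>0$ for $\a\ne\a_0$ --- because at an ideal point where every trace function is bounded the tree action may yield a closed essential surface and identifies no boundary slope.

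The genuine gap is in (3). Your claim that an ideal point contributing a zero of $\tilde f_\a$ ``would produce an essential surface with boundary slope $\a$'' is false: boundary slopes are detected at ideal points by \emph{poles} (unboundedness) of trace functions, not by zeros, and $\tilde f_\a$ can vanish at an ideal point whose associated surface is closed or has a different boundary slope. Consequently the ideal-point zeros are not eliminated, and the closing step --- ``the regular-point contribution is a priori bounded \dots combining this bound with (4) forces $\D(\a,\a_0)=1$'' --- does not follow: a finiteness statement about reducible characters yields no inequality between $\deg(\tilde f_\a)$ and $s_0$. The correct argument is comparative rather than absolute: one shows $Z_v(\tilde f_\a)\le Z_v(\tilde f_\b)$ for every point $v\in\tilde X_0$ and every slope $\b\ne\a_0$ --- at regular points using the hypothesis that $M_\sk(\a)$ has no non-cyclic representation (as in \cite[Proposition 1.5.2]{CGLS} or \cite[Proposition 4.8]{BZ4}), and at ideal points using \cite[Proposition 4.12]{BZ4} together with the hypothesis that $\a$ is not a boundary class. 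Summing over $v$ gives $\|\a\|_0\le\|\b\|_0$ for every slope $\b\ne\a_0$, hence $\|\a\|_0=s_0$, and then (4) forces $\D(\a,\a_0)=1$.
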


Note that for each torus knot $T(p,q)$, every nontrivial component
in its character variety is a semi-norm curve with $pq$ as the
associated slope.

\begin{rem}
If $K$ is a small knot, i.e. if its exterior $M_\sk$ does not contain
any closed essential surface,
then every nontrivial component of $X(M_\sk)$ is either a norm curve or a semi-norm
curve.
\end{rem}

We now proceed to get some  properties on $A$-polynomials of satellite knots in $S^3$.
Recall that a knot  $K$  in $S^3$ is a satellite knot if there is a pair of
knots $C$  and $P$ in $S^3$, called a  companion knot
and a pattern knot respectively,  associated to $K$, such that
$C$ is nontrivial, $P$  is contained  in a trivial solid torus $V$ in $S^3$
but is not contained in a
$3$-ball of $V$ and is not isotopic to the core circle of $V$,
and there is a homeomorphism $f$ from $V$ to a regular neighborhood $N(C)$ of $C$ in $S^3$
which maps a  longitude of $V$ (which bounds a disk in $S^3$) to a longitude of $N(C)$
(which bounds a Seifert surface for $C$) and maps a meridian of $V$ to a meridian  of $N(C)$,
 and finally  $K=f(P)$.
We sometimes  write a satellite knot as $K=(P, C, V, f)$ to include the above defining
information ($K$ still depends on how $P$ is embedded in $V$).

\begin{lem}\label{P-factor}Let $K=(P,C, V,f)$ be a satellite knot in $S^3$.
Then $A_\sp(x,y)|A_\sk(x,y)$ in $\z[x,y]$.\end{lem}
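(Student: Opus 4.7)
The strategy is to construct a surjective homomorphism $\phi\co \pi_1(M_\sk)\to\pi_1(M_\sp)$ whose restriction to the boundary torus carries the standard meridian-longitude pair $(\mu_\sk,\lambda_\sk)$ to $(\mu_\sp,\lambda_\sp)$; the pullback $\widehat{\phi}$ on character varieties, which by Section~\ref{Apoly} is an embedding since $\phi$ is surjective, then transports each balanced-irreducible factor of $A_\sp$ into $A_\sk$ verbatim.

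Decomposing via Seifert--Van Kampen, set $W=V\setminus\mathrm{int}\,N(P)$, let $V'=S^3\setminus\mathrm{int}\,V$ be the complementary solid torus, and let $E_\sc$ be the exterior of $C$ in $S^3$. Then
$$M_\sp=W\cup_{\partial V}V',\qquad M_\sk=f(W)\cup_{\partial V}E_\sc,$$
where the second gluing uses $f\co\partial V\xrightarrow{\cong}\partial N(C)$. I define $\phi$ to be the identity on $\pi_1(W)$ (identified with $\pi_1(f(W))$ via $f_*$), and on $\pi_1(E_\sc)$ to be the abelianization $\pi_1(E_\sc)\to H_1(E_\sc)=\z\langle\mu_\sc\rangle$ composed with the identification $\z\langle\mu_\sc\rangle=\pi_1(V')=\z\langle m_V\rangle$. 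Well-definedness on the amalgam is immediate from $f_*(m_V)=\mu_\sc$ and $f_*(\ell_V)=\lambda_\sc$ (both sides of the amalgam kill $\ell_V$ and match on $m_V$); surjectivity follows because the image already contains $\pi_1(W)$.

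The boundary compatibility comes next. Since $\partial M_\sk=f(\partial N(P))$ lies inside $f(W)$, $\phi$ restricted to $\pi_1(\partial M_\sk)$ is literally the identity into $\pi_1(\partial N(P))\subset\pi_1(W)\subset\pi_1(M_\sp)$, so the meridians coincide tautologically. The essential calculation is that $\lambda_\sk$ and $\lambda_\sp$ are the \emph{same} curve on the identified boundary torus: a Seifert surface $\Sigma_\sp$ for $P$ in $S^3$ meets $\partial V$ homologically in $w\,\ell_V$, where $w$ is the winding number of $P$ in $V$, and one may replace the piece $\Sigma_\sp\cap V'$ by $w$ parallel copies of a Seifert surface for $C$ in $E_\sc$ --- legitimate precisely because the satellite convention $f_*(\ell_V)=\lambda_\sc$ matches the boundaries --- producing a Seifert surface for $K$ in $S^3$ whose $\partial N(K)$-boundary is $f(\lambda_\sp)$. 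Hence $\lambda_\sk=\lambda_\sp$ on the boundary torus, so $\phi(\lambda_\sk)=\lambda_\sp$. This longitude identification is the real substance of the argument; without the convention $f_*(\ell_V)=\lambda_\sc$ built into the definition of a satellite knot, a spurious substitution $y\mapsto y\,x^k$ would intrude into the conclusion, and this is the main obstacle to deal with.

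Finally, for each component $X_0'\in X^*(M_\sp)$, $\widehat{\phi}(X_0')$ is a closed irreducible curve in $X(M_\sk)$ consisting of irreducible characters (since $\phi$ is surjective, $\rho'\circ\phi$ has the same image in $SL_2(\c)$ as $\rho'$), hence lying in some nontrivial irreducible component $X_0$ of $X(M_\sk)$. The identity $\widehat{i}_*^K\circ\widehat{\phi}=\widehat{i}_*^P$, a direct consequence of $\phi(\mu_\sk)=\mu_\sp$ and $\phi(\lambda_\sk)=\lambda_\sp$, shows that $\widehat{i}_*^K(X_0)$ contains the $1$-dimensional irreducible set $\widehat{i}_*^P(X_0')$, forcing $X_0\in X^*(M_\sk)$; since the eigenvalue coordinates on $\Lambda$ transform trivially under the boundary identification, the plane curves in $\c^2$ produced from either side coincide. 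Thus the balanced-irreducible factor $f_0'(x,y)$ of $A_\sp$ contributed by $X_0'$ is also the factor of $A_\sk$ contributed by $X_0$. Running over all components of $X^*(M_\sp)$, and using that distinct balanced-irreducible factors of the squarefree polynomial $A_\sp$ are pairwise coprime, their product $A_\sp$ divides $A_\sk$ over $\c[x,y]$, and hence over $\z[x,y]$ by Gauss's lemma.
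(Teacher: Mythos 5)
Your proposal is correct and takes essentially the same approach as the paper: both arguments rest on a surjection $\pi_1(M_\sk)\twoheadrightarrow\pi_1(M_\sp)$ that is the identity on the pattern piece, collapses $\pi_1(M_\sc)$ to the cyclic group generated by its meridian, and carries $(\m_\sk,\l_\sk)$ to $(\m_\sp,\l_\sp)$, so that $X(M_\sp)$ embeds in $X(M_\sk)$ compatibly with restriction to the boundary and the resulting plane curves match. The only cosmetic difference is that the paper realizes this surjection as $h_*$ for an explicit degree-one map $(M_\sk,\p M_\sk)\ra (M_\sp,\p M_\sp)$ collapsing $M_\sc$ onto the filling solid torus $V'$, whereas you construct the same homomorphism directly via Van Kampen and abelianization on the companion side, and you spell out the longitude-matching that the paper leaves implicit.
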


\pf The lemma is obviously true when $P$ is the unknot in $S^3$.
So we may assume that $P$ is a nontrivial knot in $S^3$.
Let $M_\sk$, $M_\sc$, $M_\sp$ be the exteriors of $K$, $C$ and $P$ in $S^3$ respectively.
  There is a degree one  map $h: (M_\sk, \p M_\sk)\ra (M_\sp, \p M_\sp)$ such that  $h^{-1}(\p M_\sp)=\p M_\sk$
  and $h|:\p M_\sk\ra \p M_\sp$ is a homeomorphism.
The  map $h$ is given by a  standard construction as follows.
From the definition of the pattern knot
given above, we see that if  $W$ is the exterior of $P$ in the trivial solid torus  $V$, then $M_\sp$ is obtained by Dehn filling $W$ along $\p V$ with a solid torus $V'$ such that the  meridian  slope of $V'$
is identified with the longitude slope  of $V$.
Also if we let $Y=f(W)$, then $M_\sk=M_\sc\cup Y$. Now the degree one map $h: M_\sk\ra M_\sp$ is defined to be: on $Y$ it is the homeomorphism $f^{-1}:Y\ra W$ and on $M_\sc$
it maps
 a regular neighborhood
  of a Seifert surface  in $M_\sc$  to a regular neighborhood of
  a meridian disk of $V'$ in $V'$ and maps the rest of $M_\sc$ onto
  the rest of $V'$ (which is a $3$-ball).

The degree one map $h$ induces a surjective homomorphism
$h_*$ from $\pi_1(M_\sk)$ to $\pi_1(M_\sp)$ such that
$$h_*|: \pi_1(\p M_\sk)\ra \pi_1(\p M_\sp)$$
 is an isomorphism, mapping the meridian to the meridian
 and the longitude to the longitude.
 In turn $h_*$ induces an embedding $\widehat{h_*}$ of $X(M_\sp)$ into $X(M_\sk)$
 in such a way that the restriction of $\widehat{h_*}(X(M_\sp))$ on $X(\p M_\sk)$
 with respect to the standard meridian-longitude basis $\{\m_\sk, \l_\sk)$ of $\p M_\sk$
 is the same as the restriction of $X(M_\sp)$ on $X(\p M_\sp)$ with respect to
 the standard meridian-longitude basis $\{\m_\sp, \l_\sp\}$ of $\p M_\sp$, i.e.
 for each $\chi_\r\in X(M_\sp)$, we have  $\chi_{\r}(\m_\sp)=\widehat{h_*}(\chi_\r)(\m_\sk)$,
 $\chi_{\r}(\l_\sp)=\widehat{h_*}(\chi_\r)(\l_\sk)$ and
 $\chi_{\r}(\m_\sp\l_\sp)=\widehat{h_*}(\chi_\r)(\m_\sk\l_\sk)$.
  The conclusion of the lemma now follows easily from the constructional definition
  of the $A$-polynomial.
\qed

For polynomials $f(x,\overline y)\in \c[x,\overline y]$ and $g(y, \overline y)\in\c[y,\overline y]$
both with nonzero degree in $\overline y$,
let $$Res_{\overline y}(f(x,\overline y), g(y, \overline y))$$
denote the resultant of $f(x,\overline y)$ and $g(y, \overline y)$
eliminating the variable $\overline y$.
In general  $Res_{\overline y}(f(x,\overline y), g(y, \overline y))$
may have repeated factors even when both $f(x,\overline y)$ and $g(y, \overline y)$
are irreducible over $\c$.
For a polynomial $f(x,y)\in \c[x,y]$, let $$Red[f(x,y)]$$ denote the polynomial obtained from
$f(x,y)$ by deleting all its repeated factors.

\begin{prop}\label{non-zero winding}Let $K=(P,C,V,f)$ be a satellite knot
 such that the winding number $w$  of $P$ in the solid torus $V$ is
non-zero.
Then every balanced-irreducible factor $f_\sc(\overline x,\overline y)$  of the $A$-polynomial $A_\sc(\overline x,\overline y)$ of $C$ extends to a balanced factor $f_\sk(x,y)$  of the $A$-polynomial  $A_\sk(x,y)$ of $K$.
More precisely,
\newline
(1) if the $\overline y$-degree of $f_\sc(\overline x,\overline y)$ is non-zero, then  $f_\sk(x,y)=Red[Res_{\overline y}(f_\sc(x^w,\overline y),\overline y^w-y)]$.
 In particular if $f_\sc(\overline x,\overline y)=\overline y+\d\overline x^n$ or $\overline y\;\overline x^n+\d$ for some nonnegative  integer $n$ and $\d\in\{1,-1\}$
 (such factor is irreducible and balanced), then $f_\sk(x,y)=y-(-\d)^wx^{n w^2}$
or $yx^{nw^2}-(-\d)^w$ respectively;
\newline
(2) if the $\overline y$-degree of $f_\sc(\overline x,\overline y)$ is zero, i.e.
$f_\sc(\overline x,\overline y)=f_\sc(\overline x)$ is a function of $\overline x$ only,
then  $f_\sk=f_\sc(x^w)$.
\end{prop}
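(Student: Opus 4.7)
I will produce the factor $f_\sk$ of $A_\sk$ by extending the representations underlying $f_\sc$ across the essential torus in the satellite decomposition $M_\sk=f(W)\cup_{\p V}M_\sc$ to representations of $\pi_1(M_\sk)=\pi_1(W)*_{\pi_1(\p V)}\pi_1(M_\sc)$, then reading off the eigenvalue parameterization. Let $X_0\subset X^*(M_\sc)$ be the component contributing $f_\sc$ and let $\r_\sc$ be an irreducible representation with $\chi_{\r_\sc}\in X_0$, conjugated so that $\r_\sc(\m_\sc)=\mbox{diag}(\overline x,\overline x^{-1})$, $\r_\sc(\l_\sc)=\mbox{diag}(\overline y,\overline y^{-1})$ and $f_\sc(\overline x,\overline y)=0$. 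Since $w\ne 0$, $H_1(W)=\z\m_\sp\oplus\z\l_V$ with $\m_V=w\m_\sp$ and $\l_V=\l_V$, so for any $\a\in\c^*$ with $\a^w=\overline x$ the abelian representation $\r_W$ of $\pi_1(W)$ that factors through $H_1(W)$ with $\r_W(\m_\sp)=\mbox{diag}(\a,\a^{-1})$ and $\r_W(\l_V)=\mbox{diag}(\overline y,\overline y^{-1})$ agrees with $\r_\sc$ on $\pi_1(\p V)$ under the identification of tori via $f$. Amalgamating gives $\widehat\r_\sk\co\pi_1(M_\sk)\to SL_2(\c)$, which is irreducible because $\r_\sc$ is.

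\textbf{Framing identification.} The crucial topological step is computing $\widehat\r_\sk(\l_\sk)$. Under $f$, $\p M_\sk$ is identified with $\p N(P)$ so that $\m_\sk$ corresponds to $\m_\sp$ (giving $x=\a$, hence $\overline x=x^w$), while $\l_\sk$ corresponds to a framing curve $\ell_P\subset\p N(P)$ determined by the satellite structure. To identify $\ell_P$ in $H_1(W)$, take a Seifert surface $F$ for $K$ in $S^3$ transverse to $\p V$. The intersection class $[F\cap\p N(C)]\in H_1(\p N(C))$ has vanishing $\m_\sc$-coefficient (since $F\cap M_\sc$ bounds it in $M_\sc$ and $\m_\sc$ is nontrivial in $H_1(M_\sc)$) and $\l_\sc$-coefficient equal to $-w$ (since $[F\cap\p V]+[K]=0$ in $H_1(V)$ and $[K]=w\l_V$). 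Pulling $F\cap N(C)$ back under $f^{-1}$ gives a surface in $W$ with boundary $\ell_P-w\l_V$, so $\ell_P=w\l_V$ in $H_1(W)$. Therefore $\widehat\r_\sk(\l_\sk)=\r_W(\l_V)^w=\mbox{diag}(\overline y^w,\overline y^{-w})$, and the eigenvalue pair $(x,y)$ for $\widehat\r_\sk$ satisfies $\overline x=x^w$ and $\overline y^w=y$.

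\textbf{Extraction and verification.} As $\chi_{\r_\sc}$ varies on $X_0$ and $\a$ ranges over $w$-th roots of $\overline x$, the pairs $(x,y)$ sweep a $1$-dimensional subset of $\c^*\times\c^*$ whose Zariski closure in $\c^2$ is cut out by eliminating $\overline y$ from the system $\{f_\sc(x^w,\overline y)=0,\;\overline y^w-y=0\}$, producing the factor $f_\sk(x,y)=Red[Res_{\overline y}(f_\sc(x^w,\overline y),\overline y^w-y)]$ of $A_\sk(x,y)$ when $f_\sc$ has positive $\overline y$-degree, and $f_\sk(x,y)=f_\sc(x^w)$ when $f_\sc$ is independent of $\overline y$ (the elimination is trivial in this case). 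Balance is inherited from $f_\sc$: if $(\overline x,\overline y)$ lies on $f_\sc=0$ then so does $(\overline x^{-1},\overline y^{-1})$, and taking $\a^{-1}$ as the corresponding $w$-th root sends this to $(x^{-1},y^{-1})$ under the parameterization. For the linear shapes in (1), applying $Res_{\overline y}(a\overline y+b,\overline y^w-y)=(-b)^w-a^w y$ with $(a,b)=(1,\d x^{nw})$ and $(x^{nw},\d)$ gives $y-(-\d)^wx^{nw^2}$ and $yx^{nw^2}-(-\d)^w$ respectively. The only genuinely topological ingredient is the framing identity $\ell_P=w\l_V$; the remainder is one application of van Kampen's theorem together with algebraic manipulation of resultants.
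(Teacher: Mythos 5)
Your proof is correct and takes essentially the same route as the paper: decompose $M_\sk$ as $M_\sc$ glued to the pattern exterior, extend representations from the component of $X^*(M_\sc)$ underlying $f_\sc$ by abelian representations of the pattern exterior, derive the variable relations $\overline x=x^w$, $y=\overline y^w$ from $[\m_\sc]=w[\m_\sk]$ and $[\l_\sk]=w[\l_\sc]$, and read off $f_\sk$ as the (reduced) resultant. The only difference is cosmetic: you supply the Seifert-surface justification of the framing relation $[\l_\sk]=w[\l_\sc]$ and the explicit resultant evaluation for the linear factors, both of which the paper leaves implicit.
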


\pf Let $M_\sk$, $M_\sc$, $Y$, $\m_\sc,\l_\sc$,
  $\m_\sk, \l_\sk$ be defined as in the proof of Lemma~\ref{P-factor}.
  We have  $M_\sk=M_\sc\cup Y$.
 Note that $H_1(Y;\z)=\z[\m_{\sk}]\oplus\z[\l_{\sc}]$, and
 $[\l_\sk]=w[\l_\sc]$, $[\m_\sc]=w[\m_\sk]$.
 Given  a balanced-irreducible factor $f_\sc(\overline x,\overline y)$ of $A_\sc(\overline x,\overline y)$, let $X_0$ be a component of $X^*(M_\sc)$ which gives rise
 the factor $f_\sc(\overline x,\overline y)$.
 For each element $\chi_\r\in X_0$, the restriction of $\r$ on $\pi_1(\p M_\sc)$ can be extended to an abelian representation of $\pi_1(Y)$, and thus $\r$ can be extended to a representation of $\pi_1(M_\sk)$,
  which we still denote by $\r$, such that
  $$\r(\m_\sc)=\r(\m_\sk^w),\;\r(\l_\sk)=\r(\l_\sc^w).$$
 It follows that
 $X_0$ extends to a component or components of $X^*(M_\sk)$ whose restriction on $\p M_\sk$ is or are  one dimensional and thus all together gives rise a balanced factor $f_\sk(x,y)$  of $A_\sk(x,y)$.
 Moreover   the variables $(x, y)$ of $f_\sk(x,y)$ and the
variables
 $\overline x$ and $\overline y$ of $f_\sc(\overline x,\overline y)$ are
 related by
 \begin{equation}\label{x and x bar}
 \overline x=x^w,\;y=\overline y^w.
 \end{equation}
Therefore when the $\overline y$-degree of $f_\sc(\overline x,\overline y)$ is positive,
 $f_\sk(x,y)$ can be obtained by
taking the resultant of $f_\sc(x^w,\overline y)$ and
 $\overline y^w-y$, eliminating the variable $\overline y$, and then
 deleting possible repeated factors.
  In particular if  $f_\sc(\overline x,\overline y)=\overline y+\d\overline x^n$ or $\overline y\;\overline x^n+\d$ for some nonnegative  integer $n$ and $\d\in\{-1,1\}$, then
  the resultant of $\overline y+\d x^{wn}$ or $\overline y  x^{wn}+\d$ with $\overline y^w-y$, eliminating the variable $\overline y$, is $y-(-\d)^wx^{n w^2}$
or $yx^{nw^2}-(-\d)^w$  respectively (which is irreducible over $\c$ and is banlanced). Also if the degree of $f_\sc$ in $\overline y$ is zero,
then obviously $f_\sk=f_\sc(x^w)$.
\qed

\begin{rem}{\rm
In Proposition~\ref{non-zero winding}, the $y$-degree of $f_\sk(x,y)$ is at most
equal to the $\overline y$-degree of $f_\sc(\overline x, \overline y)$ (and generically
they are equal).
This follows directly from the definition of the resultant (cf. \cite[IV,\S8]{Lang}).}
\end{rem}

Next we are going to consider
cabled knots.
Let $(p,q)$ be a pair of relatively prime  integers with $|q|\geq 2$,
 and $K$ be the $(p,q)$-cabled knot  over a nontrivial knot $C$.
That is, $K$ is a satellite knot with $C$ as a companion knot
and with $T(p,q)$ as a pattern knot which lies in the defining
solid torus $V$ as a standard $(p,q)$-cable with  winding number $|q|$.
As the $(-p,-q)$-cable over a knot  is equal to the $(p,q)$-cable over
the same knot, we may always assume $q\geq 2$.
The following theorem gives a cabling formula for the $A$-polynomial
of a cabled knot $K$ over a nontrivial knot $C$, in terms of the $A$-polynomial
$A_\sc(\overline x, \overline y)$ of $C$.

\begin{thm}\label{cabling formula}
Let $K$ be the $(p,q)$-cabled knot over a nontrivial knot $C$, with $q\geq 2$.
 Then $$A_\sk(x,y)=Red[F_{(p,q)}(x,y)Res_{\overline y}(A_\sc(x^q,\overline y), \overline{y}^q-y)]$$
if  the $\overline y$-degree of $A_\sc(\overline x, \overline y)$ is nonzero
and $$A_\sk(x,y)=F_{(p,q)}(x,y)A_\sc(x^q)$$
 if  the $\overline y$-degree of $A_\sc(\overline x, \overline y)$ is zero.  \end{thm}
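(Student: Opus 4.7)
The plan is to analyze the components of $X^*(M_\sk)$ via the decomposition $M_\sk=M_\sc\cup_{T^2} Y$, where $Y$ is the cable space of $T(p,q)$ in the defining solid torus $V$. For the divisibility in one direction, I would establish two source factors of $A_\sk(x,y)$. First, since $K$ is a satellite with pattern $T(p,q)$, Lemma~\ref{P-factor} gives that $A_{T(p,q)}(x,y)=F_{(p,q)}(x,y)$ divides $A_\sk(x,y)$. Second, applying Proposition~\ref{non-zero winding} with winding number $w=q\geq 2$ to each balanced-irreducible factor of $A_\sc(\overline x,\overline y)$ and taking their product, $Red[Res_{\overline y}(A_\sc(x^q,\overline y),\overline y^q-y)]$ divides $A_\sk$ when the $\overline y$-degree of $A_\sc$ is positive, while $A_\sc(x^q)$ divides $A_\sk$ when it is zero.

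For the reverse divisibility, I would classify components of $X^*(M_\sk)$ by restricting representations to $\pi_1(Y)$. The cable space $Y$ is Seifert fibered with central regular-fiber class $h\in\pi_1(Y)$; on the companion boundary we have $h=\m_\sc^p\l_\sc^q$, and on $\p M_\sk$ the class $h$ equals the cable longitude $\l_\sk+pq\,\m_\sk$ (using $[\m_\sc]=q[\m_\sk]$ and $[\l_\sk]=q[\l_\sc]$ in $H_1(Y)$). Let $X_0\subset X^*(M_\sk)$ be a component and $\chi_\r$ a generic character on $X_0$. \emph{Cable case}: if $\r|_{\pi_1(Y)}$ is non-abelian, then since the centralizer in $SL_2(\c)$ of any non-scalar element is abelian, centrality of $h$ forces $\r(h)=\pm I$, hence $\r(\l_\sk+pq\m_\sk)=\pm I$, giving the eigenvalue equation $yx^{pq}=\pm 1$. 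The zero locus of this equation in $\c^2$ is precisely the variety of $F_{(p,q)}(x,y)$, so the balanced-irreducible factor contributed by $X_0$ divides $F_{(p,q)}$. \emph{Companion case}: otherwise $\r|_{\pi_1(Y)}$ is abelian and factors through $H_1(Y)$; the boundary eigenvalues then satisfy $\overline x=x^q$ and $y=\overline y^q$, and irreducibility of $\r$ on $\pi_1(M_\sk)$ forces $\r|_{\pi_1(M_\sc)}$ non-abelian, so $\chi_\r$ sits over a component of $X^*(M_\sc)$ with balanced-irreducible factor $f_\sc(\overline x,\overline y)$. Eliminating $\overline y$ from $f_\sc(x^q,\overline y)=0$ and $\overline y^q=y$ via the resultant (exactly as in the proof of Proposition~\ref{non-zero winding}) shows that the factor contributed by $X_0$ divides $Res_{\overline y}(f_\sc(x^q,\overline y),\overline y^q-y)$.

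Combining the two cases, every balanced-irreducible factor of $A_\sk(x,y)$ divides $F_{(p,q)}(x,y)$ or $Res_{\overline y}(A_\sc(x^q,\overline y),\overline y^q-y)$; since $A_\sk$ has no repeated factors, the reverse divisibility follows and the claimed formula holds. The principal obstacle is the cable case: one must verify that the components of $X^*(M_\sk)$ produced by $\r(h)=\pm I$ collectively recover the \emph{full} factor $F_{(p,q)}$ (not merely some divisor), handling the asymmetry that for $q=2$ only one choice of sign yields irreducible characters, matching the single-factor form of $F_{(p,2)}$, while for $q>2$ both signs contribute, matching the two balanced-irreducible factors of $F_{(p,q)}$ noted in Lemma~\ref{lem:IrrFac}. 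A secondary subtlety, which accounts for the $Red$ in the final formula, is absorbing any accidental coincidence between balanced-irreducible factors arising from the two cases.
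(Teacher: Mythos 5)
Your reverse-divisibility argument (classifying components of $X^*(M_\sk)$ by whether the restriction to $\pi_1(Y)$ is abelian, and using centrality of the Seifert fiber $h$ to force $\r(h)=\pm I$ in the non-abelian case) is essentially the paper's Claim~\ref{no other factor}, and that half of your proposal is sound. The genuine gap is in the forward direction: you establish $F_{(p,q)}(x,y)\mid A_\sk(x,y)$ by applying Lemma~\ref{P-factor} to the pattern $T(p,q)$, but this only works when $|p|>1$. The theorem places no such restriction (and the iterated-torus-knot corollary explicitly allows $|p_i|=1$); when $|p|=1$ the pattern $T(\pm1,q)$ is the unknot, its $A$-polynomial is $1$ by the paper's convention, and Lemma~\ref{P-factor} yields nothing --- yet $F_{(\pm1,q)}(x,y)$ is a nontrivial polynomial that must still appear as a factor. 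You flag the need to ``verify that the components \dots collectively recover the full factor $F_{(p,q)}$'' but offer no mechanism for producing those components, and the tools you invoke cannot produce them.

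This is exactly what the paper's Claim~\ref{F is a factor of A_K} is for. When $|p|=1$ one writes down the presentation $\pi_1(Y)=\langle\a,\b\mid\a^q\b=\b\a^q\rangle$ with $\m_\sk=\a\b$, invokes the Kronheimer--Mrowka theorem \cite{KM2} to obtain an irreducible representation $\r_\sc$ of the homology sphere $M_\sc(p/q)$ with $\r_\sc(\g_\sc)=I$ and $\r_\sc(\l_\sc)\ne\pm I$, and then extends it to a one-parameter family $\r_\sa$ on $\pi_1(M_\sk)$ by setting $\r_\sa(\a)=ABA^{-1}$ for a fixed order-$q$ matrix $B$; letting $A$ vary produces a curve in $X^*(M_\sk)$ realizing the factor $\mp1+x^{pq}y$ when $q>2$. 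To obtain the other sign (and the only sign present when $q=2$), one twists $\r_\sc$ by the homomorphism $\e\co\pi_1(M_\sc)\to\{\pm I\}$ so that $\r_\sc^\e(\g_\sc)=-I$ and repeats the construction with an order-$2q$ matrix. Without this (or some equivalent existence argument for irreducible characters with $\r(\g_\sk)=\pm I$ and non-constant $f_\m$), your proof establishes only that every balanced-irreducible factor of $A_\sk$ divides $F_{(p,q)}$ or $Res_{\overline y}(A_\sc(x^q,\overline y),\overline y^q-y)$, i.e.\ one divisibility, not the asserted equality. A secondary imprecision: in your cable case the locus $x^{pq}y=\pm1$ strictly contains $V(F_{(p,2)})$ when $q=2$, so you also need the paper's argument that $\e=+1$ forces $q>2$ (via $N_\sk(pq)\cong L(q,p)\#(D^2\times S^1)$) to exclude the spurious component $x^{2p}y=1$.
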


\pf
For a polynomial $f(\overline x, \overline y)\in \c[\overline x, \overline y]$,
define $$Ext^q[f(\overline x, \overline y)]
=\left\{\begin{array}{ll}
Red[Res_{\overline y}(f(x^q,\overline y),  \overline{y}^q-y)],
&\mbox{if the degree of $f(\overline x, \overline y)$ in $\overline y$
is nonzero},\\
f(x^q), &\mbox{if the degree of $f(\overline x, \overline y)$ in $\overline y$
is zero}
\end{array}\right.$$
Then Proposition~\ref{non-zero winding} was saying that
$$f_\sk(x,y)=Ext^w[f_\sc(\overline x, \overline y)]$$
and Theorem~\ref{cabling formula} is saying that
$$A_\sk(x,y)=Red[F_{(p,q)}(x,y) Ext^q[A_\sc(\overline x, \overline y)]].$$
Note that
$Ext^q[A_\sc(\overline x,\overline y)]=Red[\prod Ext^q[f_\sc(\overline x,\overline y)]]$
where the product runs over all balanced-irreducible factors $f_\sc(\overline x, \overline y)$
of $A_\sc(\overline x, \overline y)$ and thus by Proposition~\ref{non-zero winding}, $Ext^q[A_\sc(\overline x, \overline y)]$ is a balanced factor
of $A_\sk(x,y)$.
 So we only need to show

\begin{claim}\label{F is a factor of A_K}
 $F_{(p,q)}(x,y)$ is  a balanced factor of
$A_\sk(x,y)$. (Note that each irreducible factor of $F_{(p,q)}(x,y)$
is balanced).
\end{claim}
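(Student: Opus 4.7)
The plan is to exhibit a $1$-parameter family of irreducible characters in $X^*(M_\sk)$ whose boundary eigenvalue image traces out $\{F_{(p,q)}(x,y)=0\}$, forcing $F_{(p,q)}\mid A_\sk$. The construction should be uniform in the companion $C$, using the Seifert fibered structure of the cable space $Y=V\setminus N(T(p,q))$: $Y$ fibers over an annulus with one cone point of multiplicity $q$, and its central regular fiber $\phi$ satisfies $[\phi]=p\m_V+q\l_V$ on $T=\p V$ and $[\phi]=\l_{K,V}$ on $\p N(K)$, where $\l_{K,V}$ denotes the cable-framing longitude. Since the pushoff $\l_{K,V}$ has self-linking $pq$ with $K$, one has $\l_\sk=\l_{K,V}\,\m_\sk^{-pq}$ in $\pi_1(M_\sk)$.

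For each $\e\in\{+1,-1\}$ admitting non-abelian irreducible representations $\r_Y\co\pi_1(Y)\to\sl$ with $\r_Y(\phi)=\e I$ and $\r_Y(\l_V)=I$, I extend $\r_Y$ to $\r\co\pi_1(M_\sk)=\pi_1(M_C)*_{\pi_1(T)}\pi_1(Y)\to\sl$ via the unique abelian representation $\r_C$ of $\pi_1(M_C)$ with $\r_C(\m_C)=\r_Y(\m_V)$ and $\r_C(\l_C)=I$. This is compatible on $\pi_1(T)$ because $H_1(M_C)=\z$ is generated by $\m_C$ with $\l_C$ null-homologous, and irreducibility of $\r_Y$ transfers to $\r$, placing $\chi_\r$ in $X^*(M_\sk)$. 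The conditions $\r_Y(\phi)=\e I$, $\r_Y(\l_V)=I$, and $\phi=p\m_V+q\l_V$ together force $\r_Y(\m_V)^p=\e I$, pinning $\r_Y|_{\pi_1(T)}$ to a finite set of choices while the $1$-parameter freedom lies in the varying $\r_Y(\m_\sk)$ away from $T$.

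Diagonalizing $\r(\m_\sk)=\mathrm{diag}(x,x^{-1})$, the relation $\l_\sk=\l_{K,V}\,\m_\sk^{-pq}$ together with $\r(\l_{K,V})=\e I$ yields $\r(\l_\sk)=\mathrm{diag}(\e x^{-pq},\e x^{pq})$, so the boundary eigenvalue curve is $\{y=\e x^{-pq}\}$. For $q>2$ both signs $\e=\pm 1$ produce non-abelian $\r_Y$ via non-central order-$q$ elements in $\sl$, and the union of the two curves has defining polynomial $x^{2pq}y^2-1$, which equals $F_{(p,q)}$ up to sign for $p>0$ (with the normalization of (\ref{polynomial of (p,q)}) handling $p<0$). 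For $q=2$ only $\e=-1$ contributes, since the multiplicity-$2$ cone point in the base orbifold forces any order-$2$ element of $\sl$ to be $\pm I$, hence central, making representations with $\r_Y(\phi)=I$ abelian; the single curve $y=-x^{-2p}$ has defining polynomial $1+x^{2p}y=F_{(p,2)}$ for $p>0$. Balance of each irreducible factor of $F_{(p,q)}$ is immediate from Lemma~\ref{lem:IrrFac} by direct substitution $(x,y)\mapsto(x^{-1},y^{-1})$.

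The main obstacle is establishing the existence and irreducibility of the representations $\r_Y$ with the prescribed boundary values using the Seifert fibered presentation of $\pi_1(Y)$, and correctly handling the $q=2$ case where only one sign of $\e$ yields an irreducible extension contributing to $X^*(M_\sk)$, which accounts for $F_{(p,2)}$ having fewer monomials than $F_{(p,q)}$ for $q>2$.
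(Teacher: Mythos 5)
Your construction collapses in exactly the case where Claim~\ref{F is a factor of A_K} has any content. When $|p|>1$ the pattern knot $T(p,q)$ is a nontrivial torus knot, so $F_{(p,q)}=A_{T(p,q)}$ divides $A_\sk$ immediately from Lemma~\ref{P-factor} and (\ref{Apoly of T(p,q)}); the entire point of the claim is the case $|p|=1$. There, your two boundary conditions $\r_Y(\phi)=\e I$ and $\r_Y(\l_V)=I$ force $\r_Y(\m_V)^{p}=\e I$ with $p=\pm1$, hence $\r_Y(\m_V)=\pm I$, so all of $\pi_1(\p V)$ is sent into the center. But the cable space decomposes along the cabling annulus as $\pi_1(Y)=\langle c\rangle *_{\langle c^q=\m_V^p\l_V^q\rangle}\langle \m_V,\l_V\rangle$, i.e.\ $\pi_1(Y)$ is generated by $\pi_1(\p V)$ together with the single element $c$ (the core of the inner solid torus). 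Once $\pi_1(\p V)$ maps to $\{\pm I\}$, the image of $\r_Y$ is generated by $\r_Y(c)$ and $\pm I$, hence abelian. So for $|p|=1$ there are \emph{no} irreducible $\r_Y$ with your prescribed boundary values, for either sign of $\e$; your family is empty and nothing forces $F_{(p,q)}\mid A_\sk$. The ``main obstacle'' you flag at the end is not a technical existence question to be checked later --- it is unsolvable as posed.

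The culprit is your insistence on gluing to an \emph{abelian} representation of $\pi_1(M_\sc)$, which is what imposes $\r(\l_\sc)=I$; that hypothesis is exactly what must be abandoned. The paper's proof instead starts from an irreducible representation $\r_\sc$ of the homology sphere $M_\sc(p/q)$ (which exists by Kronheimer--Mrowka \cite{KM2}), so that $\r_\sc(\g_\sc)=I$ while $\r_\sc(\l_\sc)\ne\pm I$, and extends it over $Y$ by setting $\r(\a)=ABA^{-1}$ with $B$ a fixed matrix of order $q$ (respectively of order $2q$, after twisting $\r_\sc$ by the central character so that $\r_\sc^\e(\g_\sc)=-I$, which handles the $\e=-1$ sheet and in particular all of $q=2$). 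The nontriviality of $\r_\sc(\l_\sc)$ is precisely what makes $\mathrm{tr}\,\r(\m_\sk)=\mathrm{tr}\bigl(ABA^{-1}\r_\sc(\b)\bigr)$ vary with $A$ and produce a one-dimensional boundary image. Your computation of the resulting plane curves $y=\e x^{-pq}$, the identification of their union with $F_{(p,q)}$, and the parity analysis distinguishing $q=2$ from $q>2$ are all correct and agree with the paper; they just sit on top of a representation family that, in the essential case, does not exist.
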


\begin{claim}\label{no other factor}
 Besides $F_{(p,q)}(x,y)$ and
$Ext^q[A_\sc(\overline x, \overline y)]$,
$A_\sk(x,y)$ has no other balanced factors.
\end{claim}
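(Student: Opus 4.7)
The plan is to show that every balanced-irreducible factor $f_0(x,y)$ of $A_\sk(x,y)$, arising from some irreducible component $X_0\subset X^*(M_\sk)$, divides the product $F_{(p,q)}(x,y)\cdot Ext^q[A_\sc(\overline x,\overline y)]$. Pick a generic character $\chi_\r\in X_0$ with $\r$ irreducible. Using the splitting $M_\sk=M_\sc\cup Y$ along the torus $T=\p V$, and hence $\pi_1(M_\sk)=\pi_1(M_\sc)*_{\pi_1(T)}\pi_1(Y)$, I analyze $\r|_{\pi_1(Y)}$. The structural input is that the cable space $Y$ is Seifert fibered with regular fiber $h$ central in $\pi_1(Y)$, with $[h]=p[\l]+q[\m]$ on $\p V$ and $[h]=[\l_\sk]+pq[\m_\sk]$ on $\p N(K)$ (the cabling slope).

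In Case A, where $\r|_Y$ is irreducible, Schur's lemma together with centrality of $h$ forces $\r(h)=\pm I$, which on eigenvalues over $\p M_\sk$ translates to $x^{pq}y=\pm 1$. So the image of $X_0$ in the $(x,y)$-plane lies in the zero set of $(x^{pq}y-1)(x^{pq}y+1)=x^{2pq}y^2-1$. For $q>2$ this is exactly $-F_{(p,q)}$ (cf.\ Lemma~\ref{lem:IrrFac}); for $q=2$ one of the two linear factors is $F_{(p,2)}=1+x^{2p}y$, and a short inspection of $\pi_1(Y)/\langle h\rangle\cong\z*\z/q\z$ shows that only the branch matching this sign carries a $1$-parameter family of characters. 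In either subcase $f_0\mid F_{(p,q)}$.

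In Case B, where $\r|_Y$ is reducible, I first argue $\r|_{M_\sc}$ is necessarily irreducible. If both restrictions were reducible they would admit upper triangularizations sharing a common eigenline along $\pi_1(T)$, using that $\r|_T\ne\pm I$ on a Zariski-open subset of the $1$-dimensional $X_0$; this eigenline would be fixed by all of $\pi_1(M_\sk)$, contradicting irreducibility of $\r$. Hence $\chi_{\r|_{M_\sc}}$ varies in a nontrivial component $Y_0\subset X^*(M_\sc)$ (the $1$-dimensional image of $X_0$ in $X(\p M_\sk)$ forces a $1$-dimensional image of $Y_0$ in $X(\p M_\sc)$), which contributes a balanced-irreducible factor $f_\sc$ to $A_\sc$. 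Because the eigenvalues of the reducible $\r|_Y$ factor through the abelianization $H_1(Y)$, the homological identities $[\m]=q[\m_\sk]$ and $[\l_\sk]=q[\l_\sc]$ recorded in the proof of Proposition~\ref{non-zero winding} yield the exact eigenvalue relations $\overline x=x^q$ and $y=\overline y^q$. Eliminating $\overline x,\overline y$ by resultant gives $f_0\mid Red[Res_{\overline y}(f_\sc(x^q,\overline y),\overline y^q-y)]=Ext^q[f_\sc]$, which divides $Ext^q[A_\sc]$.

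Combining the two cases, every balanced-irreducible factor of $A_\sk$ divides $F_{(p,q)}\cdot Ext^q[A_\sc]$, which is the content of the claim. The main obstacle is the argument in Case B: one must rigorously exclude the degenerate locus $\{\chi_\r\in X_0:\r|_T=\pm I\}$ to guarantee irreducibility of $\r|_{M_\sc}$ generically, and verify that the eigenvalue identities hold on the nose (not merely up to a root-of-unity ambiguity), which uses torsion-freeness of $H_1(Y)$ in the cable space. A secondary subtlety is the $q=2$ branch of Case A, where the asymmetry in the definition of $F_{(p,q)}$ between $q=2$ and $q>2$ must be reconciled with the two possible central values for $\r(h)$.
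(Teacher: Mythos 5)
Your overall architecture is the same as the paper's: split according to whether the restriction of a representation $\rho$ (with $\chi_\rho$ generic in the component $X_0$ producing the factor) to the cable space $Y$ is irreducible or reducible; in the irreducible case use centrality of the Seifert fiber plus Schur's lemma to land on $x^{pq}y=\pm 1$ and hence on $F_{(p,q)}$ (including the exclusion of the $\rho(h)=I$ branch when $q=2$ via $\pi_1(Y)/\langle\langle h\rangle\rangle\cong \mathbb{Z}/q * \mathbb{Z}$, which is the paper's observation that $N_K(pq)=L(q,p)\#(D^2\times S^1)$); in the reducible case use that the character factors through $H_1(Y)\cong\mathbb{Z}^2$ to get the eigenvalue relations $\overline{x}=x^q$, $y=\overline{y}^{\,q}$ and hence the resultant factor. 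That part is sound and matches the paper.

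The genuine gap is in your Case B, in the step ``if both $\rho|_{\pi_1(Y)}$ and $\rho|_{\pi_1(M_C)}$ were reducible they would share a common eigenline along $\pi_1(T)$, which would then be invariant under all of $\pi_1(M_K)$.'' This implication is false for amalgamated products: when $\rho|_{\pi_1(T)}$ is diagonalizable and non-central it has exactly \emph{two} invariant lines, and the two sides of the amalgam may each preserve a \emph{different} one of them, producing an irreducible $\rho$ on $\pi_1(M_K)$ whose restrictions to both pieces are reducible (this is precisely the mechanism that produces irreducible metabelian representations of knot groups). So your argument does not establish that $\rho|_{\pi_1(M_C)}$ is generically irreducible, and without that you have not excluded the possibility that the closure $X_0^{C}$ of the restricted characters is the trivial component of $X(M_C)$ --- in which case it contributes nothing to $A_C$ and the divisibility $f_0\mid Ext^q[A_C]$ does not follow. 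The paper avoids this entirely: it never claims $\rho|_{\pi_1(M_C)}$ is irreducible, but argues at the level of characters that the restriction of $X_0$ to $\partial M_C$ is still one-dimensional, so $X_0^{C}$ is a component with one-dimensional boundary image, and then rules out the trivial component of $X(M_C)$ by observing that $X_0$ would otherwise be the trivial component of $X(M_K)$, contradicting $X_0\subset X^*(M_K)$. To repair your proof you should replace the eigenline argument with an argument of this kind (or otherwise directly exclude $X_0^{C}$ from being the trivial component).
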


To prove the above two claims, let $M_\sk$, $M_\sc$, $Y$, $\m_\sc,\l_\sc$,
  $\m_\sk, \l_\sk$ be defined as in Lemma~\ref{P-factor} with respect to
   $K=(P, C, f, V)$ where $P=T(p,q)$ is embedded in $V$ as
  a standard  $(p,q)$-cable.
We have $M_\sk=M_\sc\cup Y$ and  $\p Y=\p M_\sc\cup \p M_\sk$.
For a convenience to the present argument, we give a direct description of $Y$
as follows. We may consider  $N=N(C)$ as $D\times C$, where $D$
is a disk of radius $2$, such that $\{x\}\times C$ has slope zero for
each point $x\in \p D$. Let
$D_*$ be the concentric sub-disk in $D$  with radius $1$.
Then $N_*=D_*\times C$ is a solid torus in $N(C)$ sharing the same core circle $C$.
We may assume that the knot $K$ is embedded in the boundary of $N_*$ as a standard $(p,q)$-curve, where $\p N_*$ has the meridian-longitude
coordinates consistent with that of $\p M_\sc=\p N$ (i.e. for a point $x\in \p D_*$, $\{x\}\times C$
is a longitude of $\p N_*$).
Then $Y$ is the exterior of $K$ in $N$.

Note that $Y$ is a Seifert fibred space whose base orbifold is an annulus with a single cone point of order $q$,
 a Seifert fiber of $Y$ in $\p M_\sc$ has slope $p/q$
 and  a Seifert fiber of $Y$ in $\p M_\sk$ has slope $pq$.
Let $\g_\sc$ be a Seifert fibre of $Y$ lying in $\p M\sc$
and $\g_\sk$ be a Seifert fibre of $Y$ lying in $\p M\sk$.
Up to conjugation, we may consider $\g_\sc$ and $\g_\sk$ as elements of $\pi_1(Y)$. Also note that $\g_\sc$ is conjugate to
 $\g_\sk$ in $\pi_1(Y)$.
It is well  known that each of $\g_\sc$ and $\g_\sk$ lies in the center of $\pi_1(Y)$ which is independent of  conjugation.
It follows that if $\r\in R(Y)$ is an  irreducible representation
then $\r(\g_\sc)=\r(\g_\sk)=\e I$,  for some fixed
$\e\in\{1,-1\}$, where  $I$ is the identity matrix. Hence if $X_0$ is an
irreducible subvariety of $X(Y)$ which contains
the character of an irreducible representation,
then for every $\chi_\r\in X_0$, we have $\r(\g_\sc)=\r(\g_\sk)=\e I$, which is
due to the fact that the characters of irreducible representations are
dense in $X_0$.

Now we are ready to prove Claim~\ref{F is a factor of A_K}.
If $|p|>1$, then by Lemma~\ref{P-factor} and the formula (\ref{Apoly of T(p,q)}), $F_{(p,q)}(x,y)$ is a
factor of $A_\sk(x,y)$. So   we may assume that $|p|=1$.
Under this assumption, one can see that the fundamental group of $Y$ has the following
presentation:
\begin{equation}\label{presentation of for fund gp of Y}\pi_1(Y)=<\a, \b\; |\; \a^q\b=\b \a^q>
\end{equation}
such that  $$\m_\sk=\a\b$$
where $\a$ is a based simple loop  free homotopic in $Y$ to the center circle of $N$
and $\b$ is a based simple loop free homotopic  in $Y$ to $\l_\sc$  in
$\p M_\sc$.
To see these assertions, note that $Y$ contains the essential annulus $A_2=\p N_*\cap Y$
with $\p A_2\subset \p M_\sk$ of the slope $pq$ (the cabling annulus, consisting of
Seifert fibers of $Y$) and $A_2$ decompose $Y$ into two pieces $U_1$ and  $U_2$,
such that $U_1$  is a
solid torus (which is $N_*\cap Y$) and $U_2$  is topologically $\p M_\sc$ times an
interval. The above presentation for $\pi_1(Y)$ is obtained by
applying the Van Kampen  theorem associated to the splitting of $Y=U_1\cup_{A_2} U_2$
along $A_2$.
We should note that as $|p|=1$, a longitude in $\p N_*$
intersects $K$ geometrically exactly once. It is this curve pushed into $U_1$
which yields the element $\a$ and pushed into $U_2$
which yields $\b$.
Also because $|p|=1$, $\{\g_\sc, \b\}$ form a basis for $\pi_1(U_2)$.
Therefore by Van Kampen, $\pi_1(Y)$
is generated by $\a, \b, \g_\sc$ with relations $\a^q=\g_\sc$ and $\g_\sc\b=\b\g_\sc$, which yields presentation (\ref{presentation of for fund gp of Y}) after canceling
the element $\g_\sc$.  With a suitable choice of orientation for $\b$
i.e. replacing $\b$ by its inverse if necessary, we also have $\m_\sk=\a\b$.
See Figure~\ref{cross-section} for an illustration when $q=3$.

\begin{figure}[!ht]
\centerline{\includegraphics{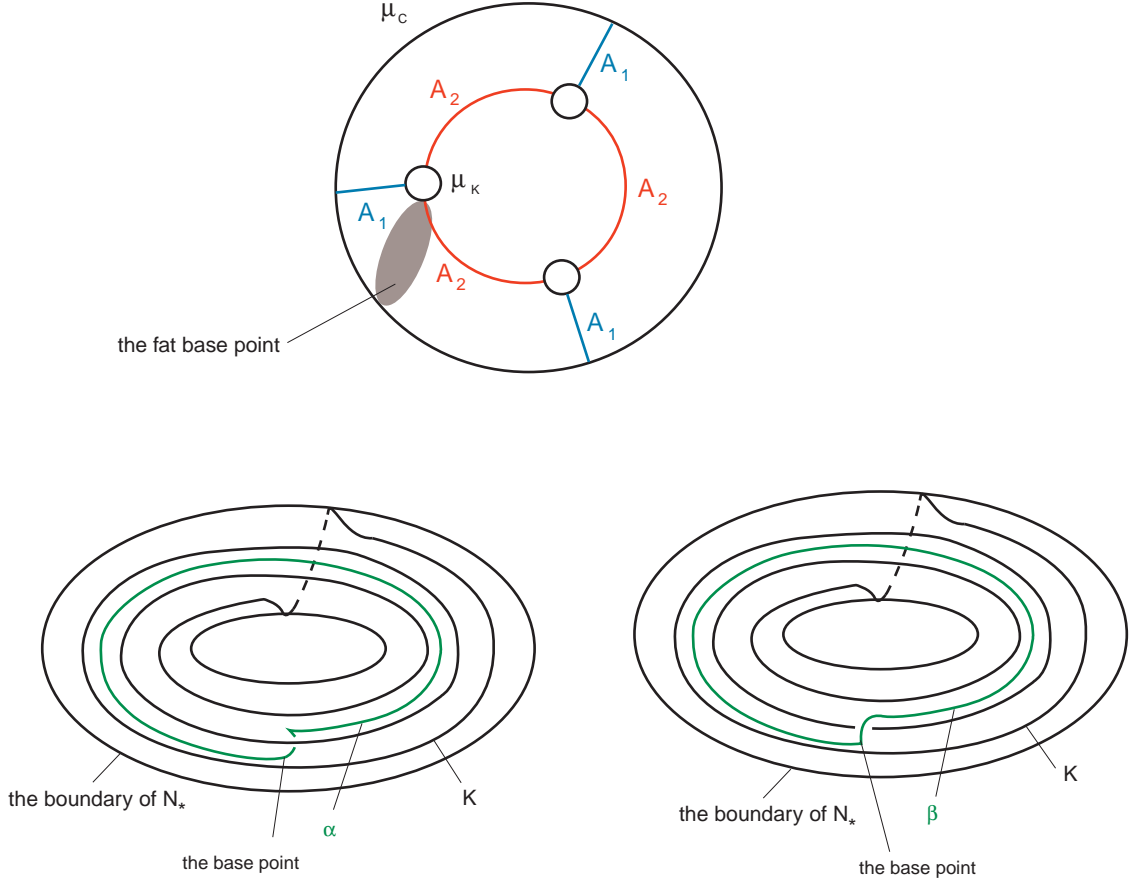}} \caption{Illustration of the
 cross section $D\cap Y$ of $Y$  and the elements $\a$ and  $\b$ of
$\pi_1(Y)$ when $q=3$.}\label{cross-section}
\end{figure}

 By \cite[Lemma 7.2]{Go}, $N_\sk(pq)$, which denotes the manifold obtained by Dehn surgery on $K$ in the solid torus $N$ with the slope $pq$, is homeomorphic to $L(q,p)\# (D^2\times S^1)$,
 and $M_\sk(pq)$, which denotes the manifold obtained by  Dehn filling of $M_\sk$ with the slope $pq$,
  is homeomorphic to $L(q,p)\# M_\sc(p/q)$ (where the meaning of the notation  $M_\sc(p/q)$ should be obvious).
As $|p|=1$, $M_\sc(p/q)$ is a homology sphere.
By \cite{KM}, $R(M_\sc(p/q))\subset R(M_\sc)$ contains at least one
irreducible representation $\r_\sc$.
Note that the restriction of $\r_\sc$ on  $\pi_1(\p M_\sc)$
is not contained in $\{I, -I\}$.
That is, we have:  $\r_\sc(\g_\sc)=I$ and $\r_\sc(\b)\ne \pm I$
(we may consider $\b$ as the longitude  $\l_\sc$  of $\pi_1(\p M_\sc)$).
If $q>2$, we can extend $\r_\sc$ to a curve of representations
of $\pi_1(M_\sk)$ with one dimensional  characters. In fact
for every $A\in SL_2(\c)$, we may define $\r_{\sa}\in R(M_\sk)$ as follows.
On $\pi_1(M_\sc)$, let $\r_\sa=\r_\sc$,
so in particular $\r_\sa(\b)=\r_\sc(\b)$,  define $\r_\sa(\a)=ABA^{-1}$
where $B$ is a fixed order $q$ matrix in $SL_2(\c)$.
It's routine to check that $\r_\sa$ is well defined,
 the trace of $\r_\sa(\m_\sk)=ABA^{-1}\r_\sc(\b)$
varies as $A$ runs over $SL_2(\c)$,
and $\r_\sa(\g_\sk)=I$.
Hence we get a  curve in $X(M_\sk)$ whose restriction on $\p M_\sk$
is one-dimensional, and moreover this curve generates
 $-1+yx^q$ as a factor of $A_\sk(x,y)$ if $p=1$
or generates  $-x^{q}+y$ as a factor of $A_\sk(x,y)$ if $p=-1$.

We now show that $1+yx^q$ is a factor of $A_\sk(x,y)$ if $p=1$
or $x^{q}+y$ is a factor of $A_\sk(x,y)$ if $p=-1$, for all $q\geq 2$.
For the representation $\r_\sc$ given in the
last paragraph, defined $\r_\sc^\e\in R(M_\sc)$  by
$$\r_\sc^\e(\g)=\e(\g)\r_\sc(\g),\; \mbox{for $\g\in \pi_1(M_\sc)$}$$
where $\e$ is the onto  homomorphism
$\e:\pi_1(M_\sc)\ra \{I,-I\}$.
As $\m_\sc$ is a generator of $H_1(M_\sc; \z)$,
$\r_\sc^\e(\m_\sc)=-\r_\sc(\m_\sc)$. Similarly as $\l_\sc$ is trivial in
$H_1(M_\sc;\z)$, $\r_\sc^\e(\l_\sc)=\r_\sc(\l_\sc)$.
Hence $\r_\sc^\e(\g_\sc)=\r_\sc^\e(\l_\sc^q\m_\sc^p)=-
\r_\sc(\g_\sc)=-I$.
We can now extend $\r_\sc^\e$ to $\r^\e_\sa$ over $M_\sk$ similarly as
for $\r_\sc$ to $\r_\sa$, only this time we choose $B$ as a fixed order $2q$ matrix in $SL_2(\c)$, so that
the trace of $\r^\e_\sa(\m_\sk)=ABA^{-1}\r_\sc(\b)$
varies as $A$ runs over $SL_2(\c)$,
and $\r^\e_\sa(\g_\sk)=-I$. The existence of the
factor of $A_\sk(x,y)$ that we set to prove now follows.
This completes the proof of Claim~\ref{F is a factor of A_K}.

We lastly  prove Claim~\ref{no other factor}. Given a balanced-irreducible factor $f_0(x,y)$ of $A_\sk(x,y)$, let $X_0$ be an irreducible component of $X^{*}(M_\sk)$ over $\c$ which produces $f_0(x,y)$.
Let $X_0^\sy$ and $X_0^\sc$ be the Zariski closure of the restriction of $X_0$ on $Y$ and $M_\sc$ respectively.
Note that each of $X_0^\sy$ and $X_0^\sc$ is irreducible.
Also  $X_0^\sy$ is at least one dimensional since its restriction  on $\p M_\sk$ is
one dimensional.
If $X_0^\sy$ does not contain
irreducible characters, then the restriction of $X_0^\sy$ on
$\p M_\sc$ is also one dimensional. So the restriction of $X_0^\sc$ on $\p M_\sc$ is one dimensional and $X_0$ is an abelian extension of $X_0^\sc$ (since the character of a reducible representation is also the character of an abelian representation).
It  follows that $X_0^\sc$ cannot be the trivial component in $X(M_\sc)$
for otherwise  $X_0$ woulde be the trivial component of $X(M_\sk)$.
Hence $X_0^\sc\in X^*(M_\sc)$ and $X_0$ is its abelian extension, which means that  $f_0(x,y)$ is a factor of $Ext^q[A_\sc(\overline x,\overline y)]$.

Hence we may assume that $X^\sy_0$ contains irreducible characters. Then, as noted before,
we have
$\r(\g_\sk)=\e I$ for each $\r$ with $\chi_\r\in X_0^\sy$.
If $\e=-1$, then $1+x^{pq}y$ or $x^{pq}+y$ is the factor contributed by $X_0$ to
 $A_\sk(x,y)$ corresponding to $p$ is positive or negative respectively.
 That is, $f_0(x,y)$ is a factor of $F_{(p,q)}(x,y)$.
 Hence we may assume that $\e=1$. It follows that   $X_0^\sy$ is a positive dimensional component of
 $N_\sk(pq)$.
 Therefore $q>2$ (because $N_\sk(pq)=L(q,p)\#(D^2\times S^1)$) and $X_0$ contributes the factor $-1+x^{pq}y$ or $-x^{pq}+y$  to
 $A_\sk(x,y)$ corresponding to $p$ is positive or negative respectively.
 That is, $f_0$ is a factor of $F_{(p,q)}(x,y)$.  This proves Claim~\ref{no other factor}
 and also completes the proof of the theorem.

Perhaps we should  note that in the above proof consistent choice of  base points can be made  for   all the relevant manifolds
 such as $M_\sk$, $M_\sc$, $Y$, $A_2$, $U_1$, $U_2$, $\p M_\sc$, $\p M_\sk$,
 $N_\sk(pq)$, $M\sc(p/q)$, $M_\sk(pq)$ so that
their  fundamental groups are all well defined as relevant subgroups or quotient groups.
 In fact we can choose a simply connected region in $D\cap Y$ (as shown in Figure~\ref{cross-section}) so that its intersection with each relevant manifold listed above
 is a simply connected region which is served as the `fat base point' of that manifold.\qed

\begin{exam}
{\rm Let $C$ be the figure $8$ knot. Its $A$-polynomial is
 $A_\sc(x,y)=x^{4} +(-1+  x^2 + 2 x^4 + x^{6}  - x^{8}) y + x^{4} y^2$ (\cite[Appendix]{CCGLS}).
  If  $K$ is the $(p,2)$-cable over $C$, $p>0$, then
$$\begin{array}{ll}
A_\sk(x,y)&=Red[F_{(p,2)}(x,y)Res_{\overline y}(A_\sc(x^2,\overline y), \overline{y}^2-y)]\\
&=(1+x^{2p}y)[x^{16} +(-1+  2 x^4 + 3 x^8 - 2 x^{12}  - 6 x^{16} - 2 x^{20} +
 3 x^{24}  + 2 x^{28} - x^{32}) y + x^{16} y^2].\end{array}$$
 If  $K$ is the $(p,3)$-cable over $C$, $p>0$, then
$$\begin{array}{ll}A_\sk(x,y)&=Red[F_{(p,3)}(x,y)Res_{\overline y}(A_\sc(x^3,\overline y), \overline{y}^3-y)]\\
&=(-1+x^{6p}y^2)[x^{36} +(-1+3 x^6 + 3 x^{12}- 8 x^{18} - 12 x^{24} + 6 x^{30}+
 20 x^{36} + 6 x^{42} - 12 x^{48} \\&\;\;\; - 8 x^{54}+ 3 x^{60}+ 3 x^{66}  - x^{72}) y+ x^{36} y^2].\end{array}$$}
\end{exam}

 Finally we would like to give an explicit formula
 for  the  $A$-polynomials of iterated torus knots.
Let $$K=[(p_1, q_2), (p_{2}, q_{2}),...,(p_n, q_n)]$$
 be an $n$-th iterated torus knot, i.e.
 $T(p_n, q_n)$ is a nontrivial torus knot and when $n>1$,
for each $i$, $n> i\geq 1$,
$[(p_i, q_i), (p_{i+1}, q_{i+1}),...,(p_n, q_n)]$
is  a satellite knot with $[(p_{i+1}, q_{i+1}),...,(p_n,q_n)]$ as a companion knot
and with $T(p_i,q_i)$, $q_i>1$,  as a pattern knot lying in the trivial solid torus $V$
as a $(p_i, q_i)$-cable with winding number $q_i$, where  $|p_i|$ may be less than $q_i$
and $|p_i|=1$ is also allowed.

\begin{cor}\label{A-poly of iterated torus}Let $K=[(p_1, q_1), (p_{2}, q_{2}),...,(p_n,q_n)]$ be an iterated torus knot.
If for each $1\leq i<n$, $q_i$ is odd, then $$A_{K}(x,y)=F_{(p_1,q_1)}(x,y)F_{(p_{2},q_{2})}(x^{q_1^2}, y)F_{(p_{3},q_{3})}(x^{q_1^2q_2^2}, y)\cdots F_{(p_n,q_n)}(x^{q_1^2q_2^2\cdots q_{n-1}^2}, y),$$
and if for some $1\leq i<n$, $q_i$ is even, we let $m$ be the smallest such integer,
then
$$\begin{array}
{ll}A_{K}(x,y)=&F_{(p_1,q_1)}(x,y)F_{(p_{2},q_{2})}(x^{q_1^2}, y)F_{(p_{3},q_{3})}(x^{q_1^2q_2^2}, y)\cdots
F_{(p_m,q_m)}(x^{q_1^2q_2^2\cdots q_{m-1}^2}, y)\\&
G_{(p_{m+1}, q_{m+1})}(x^{q_1^2q_2^2\cdots q_{m}^2}, y)
G_{(p_{m+2}, q_{m+2})}(x^{q_1^2q_2^2\cdots q_{m+1}^2}, y)
\cdots G_{(p_{n}, q_{n})}(x^{q_1^2q_2^2\cdots q_{n-1}^2}, y)
\end{array}
  $$\end{cor}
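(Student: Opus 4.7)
The strategy is induction on the depth $n$, peeling off one cabling layer at each step via Theorem~\ref{cabling formula}. The base case $n=1$ is immediate from (\ref{Apoly of T(p,q)}), which gives $A_K(x,y)=F_{(p_1,q_1)}(x,y)$ (the ``even'' branch of the formula is vacuous when $n=1$).

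For the inductive step, set $C=[(p_2,q_2),\ldots,(p_n,q_n)]$, so that $K$ is the $(p_1,q_1)$-cable over $C$. The inductive hypothesis presents $A_C(\overline x,\overline y)$ as a product of factors of the form $F_{(p_i,q_i)}(\overline x^{\beta_i},\overline y)$ and/or $G_{(p_i,q_i)}(\overline x^{\beta_i},\overline y)$ with $\beta_i = q_2^2q_3^2\cdots q_{i-1}^2$; in particular $A_C$ has positive $\overline y$-degree, so the first case of Theorem~\ref{cabling formula} yields
$$A_K(x,y)=Red\bigl[F_{(p_1,q_1)}(x,y)\cdot Ext^{q_1}[A_C(\overline x,\overline y)]\bigr].$$
Since $Ext^{q_1}$ distributes over products of balanced-irreducible factors modulo $Red$, I would reduce the calculation to evaluating $Ext^{q_1}$ on each individual factor.

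The heart of the proof is the following transformation rule, obtained by computing $Res_{\overline y}(a\overline y+b,\overline y^{q_1}-y)=(-b)^{q_1}-a^{q_1}y$ on the linear pieces and first factoring the quadratic pieces via Lemma~\ref{lem:IrrFac}: up to sign,
\begin{align*}
Ext^{q_1}\bigl[F_{(p_i,q_i)}(\overline x^{\beta_i},\overline y)\bigr] &= F_{(p_i,q_i)}\bigl(x^{q_1^2\beta_i},y\bigr) \text{ if } q_1 \text{ is odd,}\\
Ext^{q_1}\bigl[F_{(p_i,q_i)}(\overline x^{\beta_i},\overline y)\bigr] &= G_{(p_i,q_i)}\bigl(x^{q_1^2\beta_i},y\bigr) \text{ if } q_1 \text{ is even,}\\
Ext^{q_1}\bigl[G_{(p_i,q_i)}(\overline x^{\beta_i},\overline y)\bigr] &= G_{(p_i,q_i)}\bigl(x^{q_1^2\beta_i},y\bigr) \text{ for either parity.}
\end{align*}
Each inductive $F$-factor either remains $F$ or flips to $G$ according to the parity of $q_1$, while $G$-factors are absorbing; simultaneously the exponent $\beta_i$ is promoted to $q_1^2\beta_i = q_1^2q_2^2\cdots q_{i-1}^2$, matching the statement. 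Combined with the outside factor $F_{(p_1,q_1)}(x,y)$ from the cabling formula, this reproduces both cases of the corollary: the smallest index $m$ with $q_m$ even in $K$ is inherited from $C$ when $q_1$ is odd and equals $1$ when $q_1$ is even.

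The main obstacle to verify is that no repeated factors arise, so that the outer $Red$ collapses nothing; this reduces to pairwise coprimality of the polynomials in the final product. I would settle this by observing that the $x$-exponents $q_1^2\beta_i$ are strictly increasing in $i$ (since every $q_j\geq 2$), which forces the distinct $F_{(p_i,q_i)}$ and $G_{(p_i,q_i)}$ images, as well as the leading $F_{(p_1,q_1)}$, to be pairwise coprime in $\c[x,y]$ -- a direct inspection given their explicit two-term shape.
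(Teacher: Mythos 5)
Your proposal is correct and takes essentially the same route as the paper: induction on $n$, peeling off the outermost cable via Theorem~\ref{cabling formula}, with the base case given by (\ref{Apoly of T(p,q)}). The paper explicitly omits the ``routine details'' that you supply --- the resultant computation on each two-term factor, the $F$-to-$G$ flip governed by the parity of $q_1$, and the absence of repeated factors --- and these check out against Proposition~\ref{non-zero winding}(1) and Lemma~\ref{lem:IrrFac} (for the coprimality of the final factors one should compare the actual monomial exponents $|p_i|q_i q_1^2\cdots q_{i-1}^2$ rather than just the substitution exponents, using $\gcd(p_i,q_i)=1$, but this is a minor point).
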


\begin{rem}
{\rm (1) In the $A$-polynomial given in the corollary,
each $$F_{(p_i, q_i)}(x^{q_1^2q_2^2\cdots q_{i-1}^2},y)\;\;
\mbox{or}\;\;G_{(p_i, q_i)}(x^{q_1^2q_2^2\cdots q_{i-1}^2},y)$$
is a nontrivial polynomial  even when $|p_i|=1$.

(2) The polynomial expression for $A_K(x, y)$ given in the corollary
has no repeated factors.

(3) The boundary slopes detected by $A_K(x,y)$ are precisely
the following $n$ integer slopes: $$p_1q_1,\; p_{2}q_{2}q_1^2,\; p_{3}q_{3}q_{1}^2q_2^2,\;...,\;
p_nq_nq_1^2q_2^2\cdots q_{n-1}^2.$$}\end{rem}

\pf The proof goes by induction on $n$ applying Theorem~\ref{cabling formula}.
When $n=1$, the proposition holds obviously.
Suppose for $n-1\geq 1$ the proposition holds.
Note that $K$ has  $C=[(p_{2},  q_{2}),...,(p_n,q_n)]$ as a companion knot and
$P=T(p_1, q_1)$  as the corresponding  pattern knot (which maybe a trivial knot in $S^3$, which occurs exactly when $|p_1|=1$).
By induction, the $A$-polynomial $A_\sc(x,y)$ of $C$
 is of the  corresponding  form as described by the corollary.
Now applying Theorem~\ref{cabling formula} one more time  to the pair $(C, P)$
 we see that the corollary holds.
We omit the routine details.
\qed

For instance the A-polynomial of the  $(r,s)$-cable over the $(p,q)$-torus
knot is
$$A(x,y)=\left\{\begin{array}{ll}F_{(r,s)}(x,y)F_{(p,q)}(x^{s^2},y), &
\mbox{if $s$ is odd},\\
F_{(r,s)}(x,y)G_{(p,q)}(x^{s^2},y), &
\mbox{if $s$ is even}.\end{array}\right.$$


\section{Proof of Theorem ~\ref{main1}}\label{proof1}

Suppose that $K$ is a knot in $S^3$ with the same knot Floer homology and
the same  A-polynomial as a given torus knot $T(p,q)$.
Our goal is to show that $K=T(p,q)$.

By (\ref{Apoly of T(p,q)}) and Lemma~\ref{non-hyperbolic}, $K$ is not a hyperbolic knot.
So  $K$  is either a torus knot or a satellite knot.

\begin{lem}\label{torus knot case}Suppose that $T(r,s)$ is a torus knot whose $A$-polynomial divides that of
$T(p,q)$ and whose Alexander polynomial divides that of $T(p,q)$, then
$T(r,s)=T(p,q)$.
\end{lem}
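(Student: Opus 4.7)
My plan is to combine the $A$-polynomial divisibility, which cuts $(r,s)$ down to a short list of candidates, with the Alexander polynomial divisibility, which then pins down $(r,s)=(p,q)$. The $A$-polynomial step rests on the explicit form (\ref{Apoly of T(p,q)}) and the irreducibility classification in Lemma~\ref{lem:IrrFac}: $F_{(p,q)}$ is irreducible when $q=2$, and when $q>2$ it is the product of exactly two irreducible factors, each of $y$-degree $1$. Comparing $y$-degrees and leading monomials in the relation $F_{(r,s)}\mid F_{(p,q)}$ shows that only two cases can arise. In case~(a), $F_{(r,s)}=\pm F_{(p,q)}$; this forces $(r,s)=(p,q)$ directly when $q=2$, and when $q>2$ forces $s>2$, $rs=pq$, and $\mathrm{sign}(r)=\mathrm{sign}(p)$. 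In case~(b), $q>2$ and $s=2$, with $F_{(r,2)}$ equal to one of the two linear-in-$y$ irreducible factors of $F_{(p,q)}$, forcing $2r=pq$ (with matching signs). After this reduction I may assume $p,q,r,s$ are all positive.

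For the Alexander polynomial side I use the cyclotomic factorization
\[
\Delta_{T(a,b)}(t) \;=\; \prod_{\substack{d\mid a\\ d>1}}\ \prod_{\substack{e\mid b\\ e>1}} \Phi_{de}(t),
\]
valid for coprime integers $a,b>1$, which follows from $\Delta_{T(a,b)}(t)=(t-1)(t^{ab}-1)/((t^a-1)(t^b-1))$ and $\gcd(a,b)=1$. Because $(d,e)\mapsto de$ is a bijection from $\{(d,e):d\mid a,\,e\mid b\}$ onto the set of divisors of $ab$, the divisibility $\Delta_{T(r,s)}\mid\Delta_{T(p,q)}$ translates into the set inclusion $S_{r,s}\subseteq S_{p,q}$, where $S_{a,b}:=\{n\mid ab:\gcd(n,a)>1,\ \gcd(n,b)>1\}$.

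To rule out case~(b), the hypotheses $q>2$ and $\gcd(p,q)=1$ force exactly one of $p,q$ to be even. Take for concreteness $p=2p'$ with $p'$ odd and $q$ odd; then $p'>1$ (since $p>q\geq 3$) and $r=p'q$. For any nontrivial divisor $d$ of $p'$, the factor $\Phi_{2d}$ appears in $\Delta_{T(r,2)}=\prod_{d\mid r,\,d>1}\Phi_{2d}$, and its appearance in $\Delta_{T(p,q)}$ would require $2d=d'e'$ with $d'\mid p$, $e'\mid q$, both $>1$. Then $e'$ is an odd divisor of $q$ dividing $2d$, hence dividing $d$; but $d\mid p'$ and $\gcd(p',q)=1$ give $\gcd(d,q)=1$, so $e'=1$, a contradiction. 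The case $q$ even (so $q\geq 4$) is symmetric, so case~(b) cannot occur.

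In the remainder of case~(a) I have $rs=pq$ with both pairs coprime, $p>q\geq 3$, and $r>s\geq 3$. Applying the contrapositive of $S_{r,s}\subseteq S_{p,q}$ at $n=p$ (which lies outside $S_{p,q}$ because $\gcd(p,q)=1$) gives $\gcd(p,r)=1$ or $\gcd(p,s)=1$; combined with $p\mid rs$, one of $p\mid r$ or $p\mid s$ holds. The option $p\mid s$ would give $s=pk'$ and $r=q/k'$ with $k'\mid q$, and then $r>s$ would require $q>pk'^{2}\geq p$, contradicting $p>q$. Hence $p\mid r$; writing $r=pk$ with $k\mid q$ gives $s=q/k$. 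The same contrapositive at $n=q$ yields $q\mid r$ or $q\mid s$; the former forces $k=q$ and $s=1$, violating $s\geq 3$, so $q\mid s=q/k$, which forces $k=1$ and $(r,s)=(p,q)$. The most delicate step I foresee is ruling out case~(b), where the parity split and cyclotomic bookkeeping must be handled with care; once that is dispatched, the finishing argument for case~(a) is a short divisor chase.
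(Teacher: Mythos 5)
Your proof is correct and follows essentially the same route as the paper's: the $A$-polynomial divisibility is used to extract $rs=pq$ (your case~(b) is in fact subsumed in this, since there $rs=2r=pq$, so it need not be treated separately), and the Alexander-polynomial divisibility, read through roots of unity, then forces $(r,s)=(p,q)$. The paper handles the second step more tersely, rewriting $\Delta_{T(r,s)}\mid\Delta_{T(p,q)}$ as $(t^p-1)(t^q-1)\mid(t^r-1)(t^s-1)$ and splitting into the cases $q>s$ and $|p|>|r|$, but the underlying cyclotomic bookkeeping is the same as your $S_{r,s}\subseteq S_{p,q}$ argument.
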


\pf Since $A_{T(r,s)}(x,y)|A_{T(p,q)}(x,y)$,
 from the formula (\ref{Apoly of T(p,q)}), we have  $rs=pq$.
From the condition $$\D_{T(r,s)}(t)=\frac{(t^{rs}-1)(t-1)}{(t^r-1)(t^s-1)}\left|
\D_{T(p,q)}(t)=\frac{(t^{pq}-1)(t-1)}{(t^p-1)(t^q-1)}\right.,$$
we have \begin{equation}\label{pqrs}
(t^p-1)(t^q-1)|(t^r-1)(t^s-1).
\end{equation}
 Now if  $T(r,s)\ne T(p,q)$, then either $q>s$ or $|p|>|r|$.
If $q>s$, then from (\ref{pqrs}) we must have $q|r$.
By our convention for parameterizing torus knots,
$|p|>q\geq 2$. Thus we must also have $p|r$.
But $p$ and $q$ are relatively prime,  we have $pq|r$, which contradicts
the early conclusion $rs=pq$.
If $|p|>|r|$, again by our convention
$|r|>s\geq 2$, we see that $(t^p-1)$ does not divide
$(t^r-1)(t^s-1)$, and so  (\ref{pqrs}) cannot hold.
This contradiction completes the proof of the lemma.
\qed

By Lemma~\ref{torus knot case}, we see that if $K$ is a torus knot, then
$K=T(p,q)$.

We are going to show that it is impossible
for $K$ to be a satellite knot, which will consist of the rest of the proof of
Theorem~\ref{main1}.
Suppose that $K$ is a satellite knot. We need to derive    a contradiction from
this assumption.
Let  $C$ and $P$ be a pair of associated companion knot and pattern knot
  to $K$, and let
 $w$ be the winding number of $P$ in its defining solid torus $V$ (recall
  the definition of a satellite knot given in Section~\ref{Apoly}).
As $T(p,q)$ is a fibred knot,
$K$ is also fibred.
According to  \cite[Corollary~4.15 and Proposition~8.23]{BurdeZieschang},
each of $C$ and $P$ is a fibred knot in $S^3$,  $w\geq 1$,
and the Alexander polynomials of these knots satisfy the equality
\begin{equation}\label{Alexander poly}
\D_\sk(t)=\D_\sc(t^w)\D_\sp(t).
\end{equation}

We may choose  $C$ such that $C$ is itself not a satellite knot, and thus is
either a hyperbolic knot or a torus knot.

\begin{lem}
The companion knot  $C$ cannot be a hyperbolic knot.
\end{lem}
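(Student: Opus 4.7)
The plan is to derive a contradiction by comparing the Newton polygons of balanced-irreducible factors of $A_\sc$ and $A_\sk$. Suppose for contradiction that $C$ is hyperbolic. Then $X(M_\sc)$ contains a norm curve component $X_0$ (for instance the one containing the character of a discrete faithful $SL_2(\c)$-representation of $\pi_1(M_\sc)$). By Theorem~\ref{dual polygons}, the Newton polygon $N_\sc$ of the balanced-irreducible factor $f_\sc(\overline x,\overline y)$ of $A_\sc$ contributed by $X_0$ is dual, up to a positive integer multiple, to the Culler-Shalen norm disk $B_0$ associated to $X_0$. Since $B_0$ is the unit ball of a genuine norm on the two-dimensional space $H_1(\p M_\sc;\mathbb{R})$, it is a centrally symmetric convex polygon with non-empty interior; hence it has at least four vertices, and these lie in at least two distinct directions through the origin. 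Dually, $N_\sc$ has edges of at least two distinct slopes.

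Next I invoke Proposition~\ref{non-zero winding}(1), applicable because on a norm curve every slope-function $f_\a$ is non-constant, so in particular $f_\sc$ has positive $\overline y$-degree, to extend $f_\sc$ to a balanced factor $f_\sk(x,y)$ of $A_\sk(x,y)$. By hypothesis $A_\sk=F_{(p,q)}$, which by (\ref{polynomial of (p,q)}) is a binomial in $\z[x,y]$ with single-edge Newton polygon. Since the Newton polygon of a product is the Minkowski sum of those of its factors, any divisor of this binomial, in particular $f_\sk$, has Newton polygon that is a line segment parallel to that of $F_{(p,q)}$, exhibiting only a single edge slope. The desired contradiction will therefore follow once the Newton-polygon transformation under the extension formula $f_\sk=Red[Res_{\overline y}(f_\sc(x^w,\overline y),\overline y^w-y)]$ is shown to preserve the number of distinct edge slopes.

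I expect this last bookkeeping to be the main obstacle. Expanding the product $\prod_{\zeta^w=1} f_\sc(x^w,\zeta y^{1/w})$ and tracking which monomials survive the $w$-fold Galois averaging, one finds that the support of $f_\sk$ is the image of $\mathrm{supp}(f_\sc)$ under the linear bijection $(i,j)\mapsto(w^2 i,j)$, up to a uniform integer rescaling coming from gcd effects with $w$; in particular $N_\sk$ and $N_\sc$ are affinely equivalent and hence share the same number of distinct edge slopes. A priori, cancellations in the expansion could conceivably shrink the support, but the extremal monomials corresponding to the vertices of $N_\sc$ cannot cancel in the product, so the slope count is preserved. This closes the contradiction and shows that $C$ cannot be hyperbolic.
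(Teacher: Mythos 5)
Your proposal is correct and follows essentially the same route as the paper: take the balanced-irreducible factor of $A_\sc$ contributed by a norm curve, note its Newton polygon has at least two distinct edge slopes (via duality with the Culler--Shalen norm ball), extend it via Proposition~\ref{non-zero winding} to a factor of $A_\sk=F_{(p,q)}$, and contradict the fact that a binomial's divisors have single-edge Newton polygons. The paper states the preservation of edge-slope count under the substitution $\overline x=x^w$, $y=\overline y^w$ without proof; your resultant/Minkowski-sum bookkeeping, including the observation that vertex monomials cannot cancel, correctly fills in that step.
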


\pf Suppose that $C$ is hyperbolic. Then $A_\sc(\overline x,\overline y)$ contains
 a balanced-irreducible factor $f_\sc(\overline x,\overline y)$ whose Newton polygon
 detects at least two distinct boundary slopes of $C$.
 As $w\geq 1$, by Proposition~\ref{non-zero winding}, $f_\sc(\overline x,\overline y)$
 extends to a balanced factor $f_\sk(x,y)$ of $A_\sk(x,y)=A_{T(p,q)}(x,y)$.
 Moreover from the relation (\ref{x and x bar}) we see that
 the Newton polygon of $f_\sk(x,y)$ detects at least two distinct boundary slopes of $K$.
 But clearly the Newton polygon of $A_\sk(x,y)=A_{T(p,q)}(x,y)$ only detects one
 boundary slope. We arrive at a contradiction.    \qed

So $C=T(r,s)$ is a torus knot.

\begin{lem}\label{nontrivial pattern}
The pattern knot $P$ of $K$ cannot be the unknot.
\end{lem}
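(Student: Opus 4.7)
The plan is to suppose, for contradiction, that $P$ is the unknot, and derive a contradiction from the Alexander polynomial identity combined with the classical structure of winding-one satellite patterns. If $P$ is the unknot, then $\Delta_\sp(t) = 1$, so the satellite formula (\ref{Alexander poly}) reduces to $\Delta_\sk(t) = \Delta_\sc(t^w)$, making $\Delta_\sk$ a polynomial in the single variable $t^w$. First I would force $w = 1$: passing to the mirror image if necessary (which preserves $\Delta$), one may assume $p \geq 2$; then $p, q \geq 2$ gives $t^p-1 \equiv t^q-1 \equiv t^{pq}-1 \equiv -1 \pmod{t^2}$, so
\[
\Delta_{T(p,q)}(t) = \frac{(t^{pq}-1)(t-1)}{(t^p-1)(t^q-1)} \equiv \frac{(-1)(t-1)}{(-1)(-1)} \equiv 1 - t \pmod{t^2}.
\]
Hence the coefficient of $t$ in $\Delta_\sk(t)$ equals $-1 \neq 0$; but $\Delta_\sc(t^w)$ contains only monomials of the form $t^{kw}$, whose $t$-coefficient vanishes whenever $w \geq 2$. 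This forces $w = 1$.

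With $w = 1$, $\Delta_\sc(t) = \Delta_{T(p,q)}(t)$. Since the Alexander polynomial of a nontrivial torus knot determines its knot type (the unordered pair $\{|r|, s\}$ can be recovered from the cyclotomic factorization of $\Delta_{T(r,s)}$), one concludes $C = T(p, q)$. Thus $K$ is a nontrivial satellite of $C = T(p, q)$ with winding number one and unknotted pattern $P$ not isotopic to the core of $V$.

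At this point I would invoke the classical structure theorem for winding-one satellite patterns (going back to Schubert): such a $P$ is obtained from the core of $V$ by tying a local knot $K_0$ inside a $3$-ball of $V$, and correspondingly $K$ decomposes as the connected sum $K = C \# K_0$. Comparing Alexander polynomials in this decomposition gives $\Delta_{K_0}(t) = \Delta_\sk(t)/\Delta_\sc(t) = 1$; since $K$ is fibred, so is $K_0$, whence $g(K_0) = g(K) - g(C) = 0$. Thus $K_0$ is the unknot, and therefore $K = C = T(p,q)$, whose exterior is atoroidal---contradicting the existence of the essential torus $\partial V$ in $M_\sk$ that witnesses $K$ as a nontrivial satellite of $C$. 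The principal obstacle is the appeal to the winding-one structure theorem for satellite patterns; the Alexander polynomial arithmetic and the final genus count are elementary given that structural input.
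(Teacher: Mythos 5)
Your reduction to $w=1$ is correct: the computation $\Delta_{T(p,q)}(t)\equiv 1-t \pmod{t^2}$ is just the paper's observation that the top two terms of $\Delta_{T(p,q)}$ differ in degree by one, read off at the other end of the (symmetric) polynomial, and it does rule out $\Delta_{K}(t)=\Delta_{C}(t^w)$ for $w\ge 2$. The proof breaks at the next step. There is no ``classical structure theorem'' asserting that a winding-number-one pattern whose underlying knot is trivial in $S^3$ must be obtained from the core of $V$ by tying a local knot inside a $3$-ball; that statement is false. The standard counterexample is the Mazur pattern: it has winding number one, the pattern knot is unknotted in $S^3$, and it is neither isotopic to the core of $V$ nor contained in the union of a $3$-ball with a trivial core arc (its wrapping number exceeds one). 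Winding number one is a homological condition and does not control the geometric wrapping, so nothing forces the decomposition $K=C\# K_0$ on which your final contradiction (and the identity $\Delta_{K_0}=\Delta_{K}/\Delta_{C}$) depends.

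The paper avoids this entirely by invoking Hirasawa--Murasugi--Silver \cite{HMS} at the outset: since $K$ is fibred, an unknotted pattern forces $w>1$ (a fibred satellite with trivial pattern knot and winding number one can only occur when the pattern is isotopic to the core, which is excluded by the definition of a satellite). The coefficient computation then immediately contradicts $w>1$, and the lemma is done. So your arithmetic is exactly the right half of the argument; what is missing is the fibredness input from \cite{HMS} that eliminates the case $w=1$ with $P$ unknotted --- precisely the case your structure theorem was meant to handle. If you replace that step by the citation, the argument closes; as written it has a genuine gap.
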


\pf  Suppose otherwise that $P$ is the unknot.
Then as noted in \cite{HMS} the winding number $w$ of $P$ in its
defining solid torus $V$
is larger than $1$.
Equality (\ref{Alexander poly}) becomes
$\D_\sk(t)=\D_\sc(t^w)$ for some integer $w>1$.
On the other hand it is easy to check that the degrees of the leading term and the second term of $\D_{T(p,q)}(t)$ differ by $1$ and thus $\D_\sk(t)=\D_{T(p,q)}(t)$ cannot be of the form
$\D_\sc(t^w)$, $w>1$.
This contradiction completes the proof.
\qed

If $w=1$, then by Proposition ~\ref{non-zero winding},
$A_\sc(x,y)=A_{T(r,s)}(x,y)$ divides
$A_\sk(x,y)=A_{T(p,q)}(x,y)$, and by (\ref{Alexander poly})
$\D_\sc(t)=\D_{T(r,s)}(t)$ divides
$\D_\sk(t)=\D_{T(p,q)}(t)$. Hence
by Lemma~\ref{torus knot case} we have $C=T(r,s)=T(p,q)$.
By  Lemma~\ref{nontrivial pattern},
  $P$  is a non-trivial knot.
Hence from formula (\ref{Alexander poly}), we see that the genus of
$C=T(p,q)$ is  less than that of $K$.
But the genus of $K$ is equal to that of $T(p,q)$. We derive a contradiction.

Hence $w>1$.
By Lemma~\ref{P-factor}, $A_\sp(x,y)$ divides $A_\sk(x,y)$.
Now if $P$ is itself a satellite knot
with its own companion knot $C_1$ and pattern knot $P_1$.
Then again each of $C_1$ and $P_1$ is a fibred knot and the winding number
$w_1$ of $P_1$ with respect to $C_1$ is larger than zero.
Arguing as above, we see that $C_1$ may be assumed to be a torus knot
and that $w_1>1$.
Also we have $$\D_\sk(t)=\D_\sc(t^w)\D_{\sc_1}(t^{w_1})\D_{\sp_1}(t)$$
from which we see that  $P_1$ cannot be the trivial knot just as in the proof of
Lemma~\ref{nontrivial pattern}.

So after a finitely many such steps (the process  must terminate by
\cite{Soma}),  we end up with a pattern knot $P_m$ for $P_{m-1}$ such that $P_m$
is nontrivial but  is no longer a satellite knot.
Thus  $P_m$   is either a hyperbolic knot or a torus knot.
By Lemma~\ref{non-hyperbolic} and Lemma~\ref{P-factor},  $P_m$ cannot be hyperbolic.
Hence $P_m$ is a torus knot. Again because $A_{\sp_m}(x,y)$ divides
$A_\sk(x,y)=A_{T(p,q)}(x,y)$ and $\D_{\sp_m}(t)$ divides
$\D_\sk(t)=\D_{T(p,q)}(t)$, we  have
$P_m=T(p,q)$ by Lemma~\ref{torus knot case}.
But once again  we would have  $g(T(p,q))<g(T(p,q))$.
This gives a final contradiction.


\section{The  knots $k(l,m,n,p)$}\label{topological properties of J}

In \cite{EM1}  a family of hyperbolic knots $k(l,m,n,p)$ in $S^3$ (where at least one of $p$ and $n$ has to be zero)
 was constructed such that each knot in the family admits one (and only one)  half-integral toroidal surgery.
To be hyperbolic, the following restrictions on the values for $l,m,n,p$
are imposed:

 \begin{equation}\label{eq:lmCondition}
 \begin{array}{l}
 \mbox{If $p=0$, then $l\ne 0, \pm1$, $m\ne 0$, $(l,m)\ne (2,1), (-2,-1)$, $(m,n)\ne (1,0),
 (-1,1)$};\\
 \mbox{If $n=0$, then $l\ne 0, \pm1$, $m\ne 0,1$, $(l,m,p)\ne (-2,-1,0)$, $(2,2,1)$}.
 \end{array}
 \end{equation}

From now on we assume that any given $k(l,m,n,p)$
is hyperbolic, i.e. $l,m,n,p$ satisfy the above restrictions.

The half-integral  toroidal slope $r=r(l,m,n,p)$ of $k(l,m,n,p)$  was explicitly computed in \cite[Proposition~5.3]{EM2} as given below:
\begin{equation}\label{eq:rSlope}
r=\left\{
\begin{array}{ll}
l(2m-1)(1-lm)+n(2lm-1)^2-\frac12,\quad&\text{when }p=0;\\
l(2m-1)(1-lm)+p(2lm-l-1)^2-\frac12,\quad&\text{when }n=0.
\end{array}
\right.
\end{equation}

It turns out that $k(l,m,n,p)$ are the only hyperbolic knots in $S^3$
which admit non-integral toroidal surgeries.

\begin{thm}\label{GL2}{\rm (\cite{GL2})}
If  a hyperbolic knot $K$ in $S^3$ admits
a non-integral toroidal surgery, then  $K$ is one of the knots $k(l,m,n,p)$.
\end{thm}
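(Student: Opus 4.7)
The plan is to follow the combinatorial--topological strategy that has driven the classification of exceptional Dehn surgeries on hyperbolic knots. Let $K\subset S^3$ be a hyperbolic knot whose exterior $M_\sk$ admits a non-integral toroidal filling slope $\a=a/b$ with $b\geq 2$, and fix an essential torus $T\subset M_\sk(\a)$.

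The first step is to pin down the denominator. Gordon--Luecke's theorem that a toroidal slope on a hyperbolic knot exterior satisfies $\D(\a,\m)\leq 2$ gives $b=\D(\a,\m)\leq 2$, so combined with $b\geq 2$ we obtain $b=2$: the slope $\a$ is half-integral with odd numerator.

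Next I would put $T$ in minimal general position with respect to the filling solid torus $V_\a$, so that $\widehat T:=T\cap M_\sk$ is an essential punctured torus whose $n$ boundary circles all have slope $\a$, and $T\cap V_\a$ is a disjoint union of meridian disks $u_1,\dots,u_n$ of $V_\a$. The arcs of $\widehat T\cap\p M_\sk$ together with the disks $u_i$ produce the two dual intersection graphs $G_T\subset T$ (with fat vertices the $u_i$) and $G_u\subset u_1$ (with fat vertices the components of $\widehat T\cap\p V_\a$). The heart of the argument is a long combinatorial analysis of the pair $(G_T,G_u)$ using the standard machinery developed for exceptional surgery problems: the parity rule, Scharlemann cycles and their associated M\"obius bands and tori, the no-nesting lemma, and the classification of great webs, all applied under the constraint that $M_\sk$ is hyperbolic (and hence contains no essential sphere, disk, annulus, or torus). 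This analysis should force $n$ to be very small and cut the possibilities for $(G_T,G_u)$ down to a finite list of patterns.

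Translating the surviving combinatorial data back into a topological description of $M_\sk$, one finds that $M_\sk(\a)$ splits along $T$ into pieces which, after undoing the Dehn filling, present $M_\sk$ as the exterior of a knot built by a specific gluing of a cable space and a Seifert fibered piece to a $2$-string tangle complement, parameterized by four integers $(l,m,n,p)$ subject to (\ref{eq:lmCondition}). Comparing this recipe with Eudave-Mu\~noz's explicit construction of $k(l,m,n,p)$ in \cite{EM1} identifies $K$ with one of them, and the slope formula (\ref{eq:rSlope}) provides a consistency check that the half-integral toroidal slope matches. The main obstacle, as in every instance of this program, is the length and delicacy of the combinatorial case analysis: one must eliminate every graph configuration on $T$ that obeys the local constraints but is not realized by some $k(l,m,n,p)$, and simultaneously confirm that the surviving configurations reconstruct the entire four-parameter Eudave-Mu\~noz family without omission or duplication.
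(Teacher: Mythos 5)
This statement is not proved in the paper at all: it is quoted verbatim from Gordon--Luecke's paper \cite{GL2}, and the authors supply only the citation. So the relevant comparison is between your sketch and the actual argument of \cite{GL2}, and there your proposal has a genuine gap: it is a roadmap rather than a proof, and the entire mathematical content of the theorem lies in the part of the roadmap you leave unexecuted. Your first step, reducing to $\D(\a,\m)=2$, is fine but already rests on another deep Gordon--Luecke theorem (that toroidal slopes on hyperbolic knot exteriors in $S^3$ have distance at most $2$ from the meridian), so even the ``easy'' step is an imported black box. Everything after that --- ``a long combinatorial analysis of the pair $(G_T,G_u)$ \dots should force $n$ to be very small and cut the possibilities down to a finite list,'' followed by ``translating the surviving combinatorial data back into a topological description'' --- is a description of what a proof would accomplish, not an argument. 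You acknowledge this yourself in the final sentence. In particular, nothing in the proposal explains why the surviving graph configurations yield exactly the tangle/graph-manifold decomposition underlying the Eudave-Mu\~noz construction, nor why the resulting knots are parameterized by four integers subject to (\ref{eq:lmCondition}) with no omissions; that identification occupies the bulk of the seventy-page paper \cite{GL2}.

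To be clear about what would be acceptable here: since the theorem is an established external result, the honest options are either to cite \cite{GL2} (as the paper does) or to actually carry out the combinatorial classification. A proposal that names the standard tools (parity rule, Scharlemann cycles, great webs) and then asserts that they ``should'' produce the answer does neither, because for this particular theorem the case analysis is not routine --- it is the theorem.
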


\begin{prop}\label{prop:Dupl}
The knots $k(l,m,n,p)$ have the following properties:
\newline(a) $k(l,m,n,0)$ is the mirror image of  $k(-l,-m,1-n,0)$.
\newline(b) $k(l,m,0,p)$ is the mirror image of  $k(-l,1-m,0,1-p)$.
\newline(c) $k(l,\pm1,n,0)=k(-l\pm1,\pm1,n,0)$.
\newline(d) $k(2,-1,n,0)=k(-3,-1,n,0)=k(2,2,0,n)$.
\end{prop}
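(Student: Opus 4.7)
The plan is to verify each identification by manipulating the defining knot diagram of $k(l,m,n,p)$ from \cite{EM1}. Every such knot is obtained by inserting four twist regions, with signed twist counts $l,m,n,p$, into a fixed outer Conway tangle, so the proposition reduces to four isotopy arguments in the plane that track how a symmetry or special-parameter simplification of this outer tangle reparameterizes the four twist regions.

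For (a) and (b), I would prove the mirror image assertions by reflecting the defining diagram through an appropriate plane: this operation negates every crossing and therefore negates each of the four twist counts. However, the outer tangle is not exactly symmetric under the reflection; it carries certain fixed half-twists that, after reflection, can only be absorbed by altering a specific twist region. Doing this bookkeeping gives the substitution $(l,m,n,0)\mapsto(-l,-m,1-n,0)$ in (a) and $(l,m,0,p)\mapsto(-l,1-m,0,1-p)$ in (b); the shifts $n\mapsto 1-n$ and $m\mapsto 1-m$, $p\mapsto 1-p$ (rather than pure sign reversals) are exactly what records the absorbed half-twists.

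For (c), the key observation is that when $m=\pm 1$ the $m$-twist region collapses to a single crossing of definite sign. This crossing can be pulled across the $l$-twist region by a sequence of flypes (equivalently, by a standard isotopy of the underlying rational tangle), and the effect on the parameters is to replace $l$ by $-l\pm 1$ while leaving the residual $\pm 1$ and the parameters $n, p$ unchanged. This yields $k(l,\pm 1,n,0)=k(-l\pm 1,\pm 1,n,0)$. For (d), the first equality $k(2,-1,n,0)=k(-3,-1,n,0)$ is the special case $(l,m)=(2,-1)$ of (c). The second equality $k(2,-1,n,0)=k(2,2,0,n)$ reflects an additional exceptional symmetry of the outer tangle that becomes visible only at this value of $(l,m)$: an isotopy swapping the roles of the $n$- and $p$-regions which simultaneously converts $(2,-1)$ into $(2,2)$. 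I would exhibit this isotopy explicitly on the diagram.

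The main obstacle is not finding the isotopies — all four are essentially forced by the structure of the Eudave-Mu\~noz tangle — but rather keeping the sign and framing conventions for $l,m,n,p$ perfectly consistent throughout. The asymmetries in the outer tangle are what produce the nontrivial shifts ($1-n$, $1-m$, $1-p$, $-l\pm 1$) instead of clean sign flips, so the bulk of the work is a careful convention check against the diagrams of \cite{EM1}, after which each part is a single planar isotopy together with an absorption of half-twists.
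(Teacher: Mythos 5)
The paper does not prove this proposition at all: its entire ``proof'' is the citation ``This follows from \cite[Proposition~1.4]{EM1}'', where exactly these four identities are recorded as part of Eudave-Mu\~noz's construction. So your proposal is, in effect, an outline for reproving the cited result from the defining tangle diagrams rather than invoking it. The mechanisms you identify are the right ones and match how the identities arise in \cite{EM1}: the knots are obtained by filling four twist boxes in a fixed tangle, mirror reflection negates all twist parameters but the ambient tangle carries half-twists that must be reabsorbed into one box (producing the shifts $n\mapsto 1-n$, $m\mapsto 1-m$, $p\mapsto 1-p$ rather than clean negations), the $m=\pm1$ box degenerates to a single crossing that can be flyped across the $l$-box to give $l\mapsto -l\pm1$, and the first equality in (d) is indeed the $(l,m)=(2,-1)$ instance of (c). What your approach buys is self-containedness; what the paper's citation buys is not having to redo Eudave-Mu\~noz's convention bookkeeping.

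That said, as written your proposal is a plan rather than a proof: every one of the four assertions is reduced to an isotopy that you describe but do not exhibit, and you yourself identify the sign and framing conventions as ``the bulk of the work'' while not carrying out that check. In particular, the claim that the reflection asymmetry is absorbed precisely into the $n$-box in (a) but into both the $m$- and $p$-boxes in (b), and the existence of the exceptional $n\leftrightarrow p$ swap in (d), are exactly the points where an unverified convention error would silently produce a wrong identity; these need to be pinned down against the actual figures of \cite{EM1} before the argument is complete. If you intend the proof to be the citation, one sentence suffices, as in the paper; if you intend to reprove it, the diagrams must actually be drawn.
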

\begin{proof}
This follows from \cite[Proposition~1.4]{EM1}.
\end{proof}

Explicit closed braid presentations for  the knots $k(l,m,n,p)$
are given in \cite{EM2}. Figure~\ref{fig:Braid} shows the braids whose closure are $k(l,m,n,p)$. The left two pictures are a reproduction of \cite[Fig.~12]{EM2}, but the right two pictures are different from \cite[Fig.~13]{EM2}. Here an arc with label $s$ means $s$ parallel strands, and a box with label $t$ means $t$ positive full-twists when $t>0$ and $|t|$ negative full-twists when $t<0$.
We only give the picture for the case  $l>0$, since the  case $l<0$ can be treated by applying Proposition~\ref{prop:Dupl}.

\begin{figure}[!ht]
\begin{center}
\begin{picture}(375,167)
\put(0,0){\scalebox{0.55}{\includegraphics*{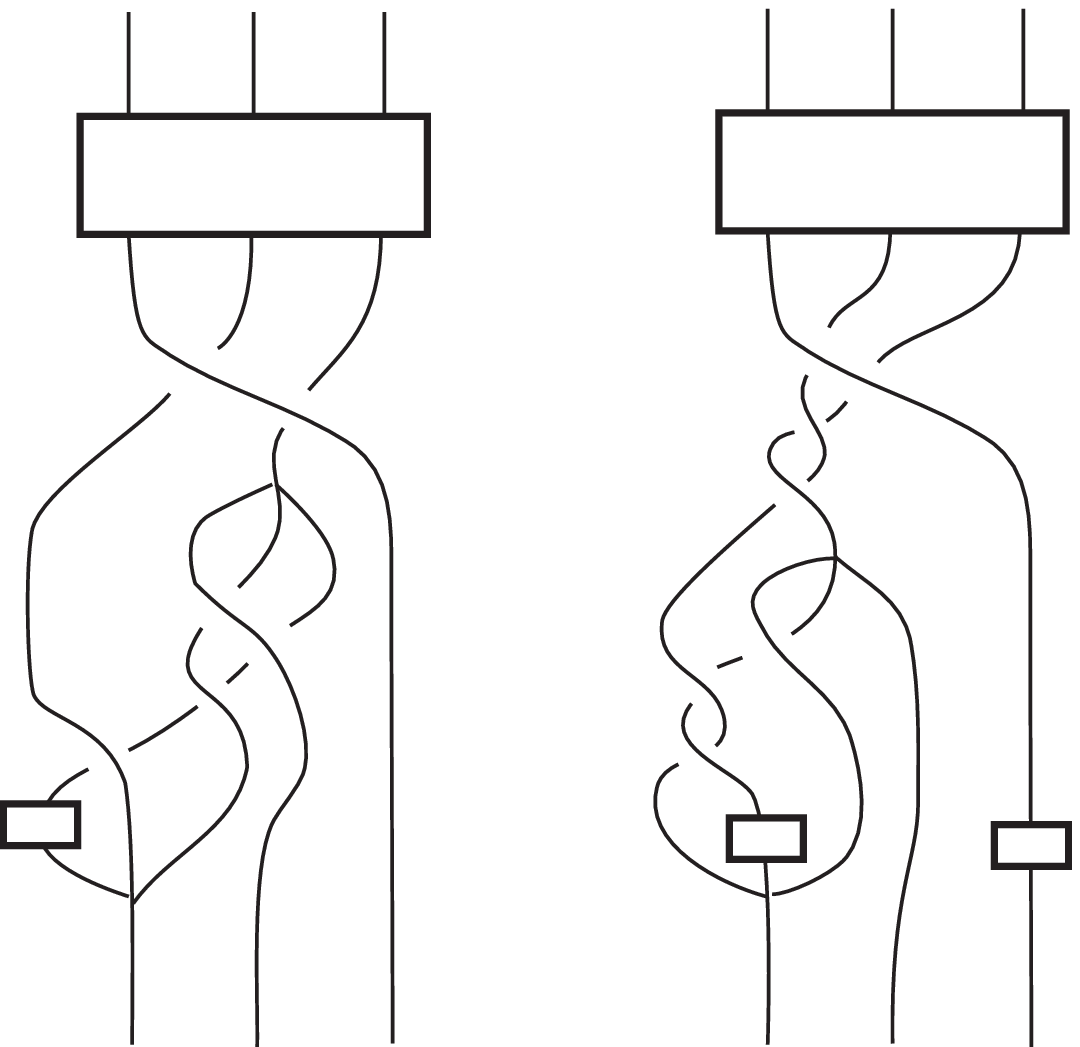}}}

\put(19,0){
\begin{turn}{90}
$\scriptscriptstyle ml-1$
\end{turn}
}

\put(42,3){$\scriptscriptstyle 1$}

\put(38,135){$n$}

\put(60,0){
\begin{turn}{90}
$\scriptscriptstyle ml-1$
\end{turn}
}

\put(25,25){
\begin{turn}{45}
$\scriptscriptstyle l-1$
\end{turn}
}

\put(2,34){$\scriptscriptstyle -1$}

\put(6,64){$\scriptscriptstyle 1$}

\put(47,58){
\begin{turn}{78}
$\scriptscriptstyle ml-l-1$
\end{turn}
}

\put(15,-10){$p=0,m>0$}

\put(120,0){
\begin{turn}{90}
$\scriptscriptstyle -ml$
\end{turn}
}

\put(143,3){$\scriptscriptstyle 1$}

\put(161,0){
\begin{turn}{90}
$\scriptscriptstyle -ml$
\end{turn}
}

\put(102,30){$\scriptscriptstyle 1$}

\put(122,18){
\begin{turn}{50}
$\scriptscriptstyle -ml-l$
\end{turn}
}

\put(123,62){
\begin{turn}{50}
$\scriptscriptstyle l-1$
\end{turn}
}

\put(141,135){$n$}

\put(117,32){$\scriptscriptstyle -1$}

\put(159,30){$\scriptscriptstyle -1$}

\put(116,-10){$p=0,m<0$}

\put(220,1){\scalebox{0.55}{\includegraphics*{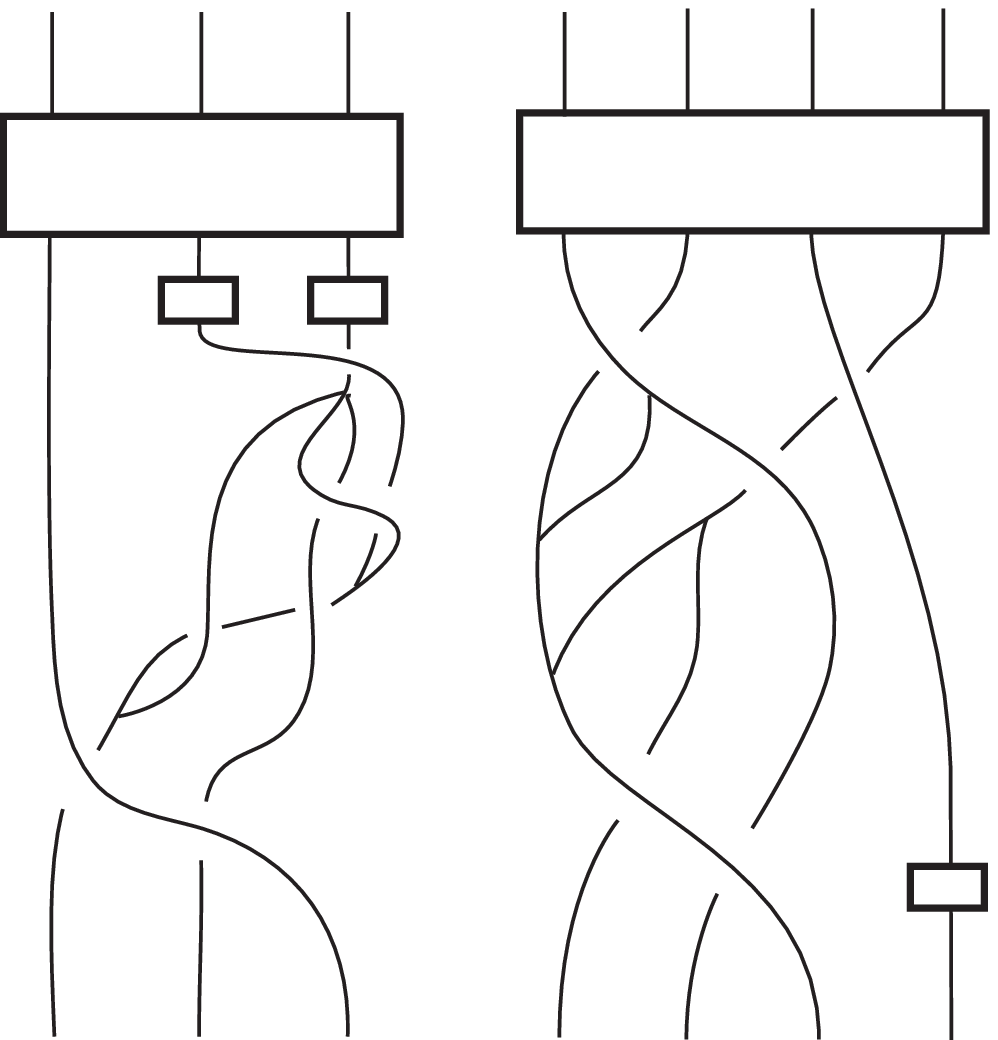}}}

\put(249,135){$p$}

\put(226,3){
\begin{turn}{90}
$\scriptscriptstyle ml-l$
\end{turn}
}

\put(250,3){
\begin{turn}{90}
$\scriptscriptstyle l-1$
\end{turn}
}

\put(282,93){
\begin{turn}{87}
$\scriptscriptstyle l-1$
\end{turn}
}

\put(273,3){
\begin{turn}{90}
$\scriptscriptstyle ml-l$
\end{turn}
}

\put(243,84){
\begin{turn}{60}
$\scriptscriptstyle ml-2l$
\end{turn}
}

\put(260,89){
\begin{turn}{0}
$\scriptscriptstyle 1$
\end{turn}
}

\put(256,69){
\begin{turn}{0}
$\scriptscriptstyle l$
\end{turn}
}

\put(270,117){$\scriptscriptstyle -1$}

\put(247,117){$\scriptscriptstyle -1$}

\put(220,-10){$n=0,m>0$}


\put(337,135){$p$}

\put(367,24){$\scriptscriptstyle -1$}

\put(311,10){$\scriptscriptstyle l$}

\put(331,10){$\scriptscriptstyle 1$}

\put(348,2){
\begin{turn}{90}
$\scriptscriptstyle -ml$
\end{turn}
}

\put(369,2){
\begin{turn}{90}
$\scriptscriptstyle -ml$
\end{turn}
}

\put(312,84){
\begin{turn}{45}
$\scriptscriptstyle l-1$
\end{turn}
}

\put(307,58){
\begin{turn}{50}
$\scriptscriptstyle -ml-l$
\end{turn}
}

\put(314,-10){$n=0,m<0$}

\end{picture}
\end{center}
 \caption{The braid whose closure is $k(l,m,n,p)$, where $l>0$. }\label{fig:Braid}
\end{figure}

\begin{prop}\label{prop:Genus}
Suppose that $l>0$, let
$$N=\left\{\begin{array}{ll}
2ml-1, &\text{if }p=0, n\ne0, m>0,\\
-2ml+1, &\text{if }p=0, n\ne0, m<0,\\
2ml-l-1, &\text{if } n=0, m>0,\\
-2ml+l+1, &\text{if } n=0, m<0.
\end{array}\right.
$$
Then the genus of $k(l,m,n,0)$ is
$$g=|n|\frac{N(N-1)}2+\left\{\begin{array}{ll}
m^2l^2-m\frac{l(l+5)}2+l+1, &\text{if }m>0, n\le0,\\
-m^2l^2+m\frac{l(l+1)}2-l+1, &\text{if }m>0, n>0,\\
m^2l^2-m\frac{l(l-1)}2, &\text{if } m<0, n\le0,\\
-m^2l^2+m\frac{l(l+3)}2, &\text{if } m<0, n>0,
\end{array}\right.
$$
and the genus of $k(l,m,0,p)$ is
$$g=|p|\frac{N(N-1)}2+\left\{\begin{array}{ll}
m^2l^2-m\frac{l(l+5)}2+l+1, &\text{if }m>0, p\le0,\\
-m^2l^2+m\frac{l(l+1)}2+1, &\text{if }m>0, p>0,\\
m^2l^2-m\frac{l(l-1)}2, &\text{if } m<0, p\le0,\\
-m^2l^2+m\frac{l(l+3)}2-l, &\text{if } m<0, p>0.
\end{array}\right.
$$
\end{prop}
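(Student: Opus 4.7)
The plan is to construct a Seifert surface directly from the braid diagrams in Figure~\ref{fig:Braid} and argue that it realizes the minimal genus. For the closure of any braid on $b$ strands that happens to be a knot, the Seifert circles produced by Seifert's algorithm are precisely the $b$ strands, so the resulting surface $F$ satisfies $\chi(F)=b-c$ and
\[
g(F)=\frac{c-b+1}{2},
\]
where $c$ is the total number of crossings of the braid word (counted without sign). This yields the upper bound $g(K)\le g(F)$ on the Seifert genus, and the goal is to verify both that $g(F)$ matches the formula in the statement and that $g(F)=g(K)$.

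The first step is the bookkeeping: for each of the four subcases determined by the signs of $m$ and $n$ (respectively $m$ and $p$), one reads $b$ and $c$ off Figure~\ref{fig:Braid}. The $n$-box (resp.\ $p$-box) sits on $N$ strands and contributes exactly $|n|\,N(N-1)$ (resp.\ $|p|\,N(N-1)$) crossings, which after assembling with the rest of the crossing count and halving produces the leading term $|n|N(N-1)/2$ (resp.\ $|p|N(N-1)/2$) in the genus formula. The remaining boxes, whose labels are linear in $l$ and $m$, contribute a number of crossings quadratic in $l,m$; adding them, subtracting $b-1$, and halving yields the second summand of the proposition. The jump in this second summand between the subcases $n\le 0$ and $n>0$ (and similarly for $p$) is accounted for by cancellations between the crossings of the $n$- or $p$-box and the $\pm 1$-labeled twist boxes in the base, which only occur for one sign of $n$ (respectively $p$).

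The second step is to show $g(F)=g(K)$. The cleanest route is to observe that after a suitable conjugation the braid word of Figure~\ref{fig:Braid} is \emph{homogeneous} in the sense of Stallings, i.e.\ each standard generator $\sigma_i$ appears with only one sign throughout the word. Stallings' fibration theorem then implies that $F$ is a fiber of a fibration $S^3\setminus k(l,m,n,p)\to S^1$, and hence $F$ realizes the minimum genus; this also reconfirms fiberedness, which is needed later in Section~\ref{proof2}. If homogeneity fails for some parameter values, one can instead match the upper bound against the lower bound $2g(K)\ge \deg\Delta_K(t)$ after computing $\Delta_K(t)$ from the reduced Burau matrix of the braid. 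The expected main obstacle is the case-by-case crossing count in Step~1: tracking which crossings in the $\pm 1$-boxes cancel against those in the $n$- or $p$-box in each sign regime, and verifying that the resulting expression matches the polynomial displayed in the statement, is a delicate combinatorial exercise but essentially routine once the case distinctions are sorted out.
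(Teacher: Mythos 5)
Your proposal is correct and follows essentially the same route as the paper: the paper likewise observes (citing Eudave-Mu\~noz) that $k(l,m,n,p)$ is the closure of a positive or negative braid --- canceling the negative crossings against the $n$- or $p$-box in the mixed-sign cases --- so that the Bennequin surface is a fiber surface and $g=\frac{C-N+1}{2}$, and then carries out the same case-by-case crossing count. Your appeal to Stallings' homogeneous-braid fibration theorem is just a mild generalization of the positive/negative-braid fact the paper uses, and the Alexander-polynomial fallback is not needed.
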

\begin{proof}
In \cite{EM2}, it is noted that $k(l,m,n,p)$ is the closure of a positive or negative braid. Hence it is fibered, and the genus can be computed by the formula $$g(k)=\frac{C-N+1}2,$$
where $C$ is the crossing number in the positive or negative braid, and $N$ is the braid index.

When $p=0,m>0,n\le0$, the braid is a negative braid, $N=2ml-1$,
$$C=-n(2ml-1)(2ml-2)+ml(ml-1)+ml-2+l(ml-l-1)+(ml-l-1)(ml-l-2),$$
so
$$g=-n(2ml-1)(ml-1)+m^2l^2-m\frac{l(l+5)}2+l+1.$$

When
$p=0,m>0,n>0$, we can cancel all the negative crossings in the braid to get a positive braid of index $N=2ml-1$. We have
$$C=n(2ml-1)(2ml-2)-\big(ml(ml-1)+ml-2+l(ml-l-1)+(ml-l-1)(ml-l-2)\big),$$
so
$$g=n(2ml-1)(ml-1)-m^2l^2+m\frac{l(l+1)}2-l+1.$$

The computation for other cases are similar.
\end{proof}

Using Proposition~\ref{prop:Genus} and Proposition~\ref{prop:Dupl} (a)(b), we can compute the genus of $k(l,m,n,p)$ when $l<0$.
For example, when $p\le0$,
the genus of $k(l,m,0,p)$ is
\begin{equation}\label{eq:Genus0p}
g=\left\{\begin{array}{ll}
-p\frac{(2ml-l-1)(2ml-l-2)}2+m^2l^2-m\frac{l(l+5)}2+l+1, &\text{if } l>0, m>0,\\
-p\frac{(-2ml+l+1)(-2ml+l)}2+m^2l^2-m\frac{l(l-1)}2, &\text{if } l>0, m<0,\\
-p\frac{(-2ml+l+1)(-2ml+l)}2+m^2l^2-m\frac{l(l-1)}2, &\text{if } l<0, m>0,\\
-p\frac{(2ml-l-1)(2ml-l-2)}2+m^2l^2-m\frac{l(l+5)}2+l+2, &\text{if } l<0, m<0.
\end{array}\right.
\end{equation}

The $r$-surgery on the knot $J=k(l,m,n,p)$  was explicitly given in \cite{EM1}
which is the double branched cover of $S^3$ with the branched set in $S^3$ being a link
shown in Figure~\ref{tangle1}. From the tangle decomposition of the branched link
one can see that $M_\sj(r)$ is a graph manifold obtained by gluing two
Seifert fibred spaces, each over a disk with two cone points, together along
their torus boundaries.
For our purpose, we need to give a more detailed description of the graph
manifold $M_\sj(r)$ as follows.

 Let $(B,t)$ denote  a two string tangle, i.e. $B$ is a $3$-ball and $t$ is a
 pair of disjoint properly embedded arcs in $B$. Here we may assume that $B$ is
 the unit $3$-ball in the $xyz$-space $\mathbb R^3$ (with the $xy$-plane horizontal)
 and that the four endpoints of $t$ lies in the lines $z=y$, $x=0$ and $z=-y,x=0$.
 Let $D$ be the unit disk in $B$ which is the intersection of $B$ with
 the $yz$-plane. Then the four endpoints of $t$ divides $\p D$ into four arcs,
 naturally named the east, west, north, south arcs.
The {\it denominator closure} of $(B,t)$ is the link in $S^3$ obtained by capping off
$t$ with the east and west arcs, and the {\it numerator closure} of $(B,t)$ is the link in $S^3$ obtained by capping off
$t$ with the north and south arcs.

Let $(B_i,t_i)$, $i=1,2$, be the two tangles shown in Figure~\ref{tangle1}
and let $X_i$ be the double branched cover of $(B_i,t_i)$.
The denominator closure of $(B_1,t_1)$ is the twisted knot of type $(2,p)$
(which is the trivial knot when $p=0,1$, the trefoil knot when $p=-1$)
and therefore the double branched cover of $S^3$ over the link
is the lens space of order $|2p-1|$.
The numerator closure of $(B_1,t_1)$ gives a composite link in $S^3$
and in fact the composition  of two nontrivial rational links
corresponding  to the rational numbers $-l$ and $(-lm(2p-1)+pl+2p-1)/(-m(2p-1)+p)$.

\begin{figure}[!ht]
\centerline{\includegraphics{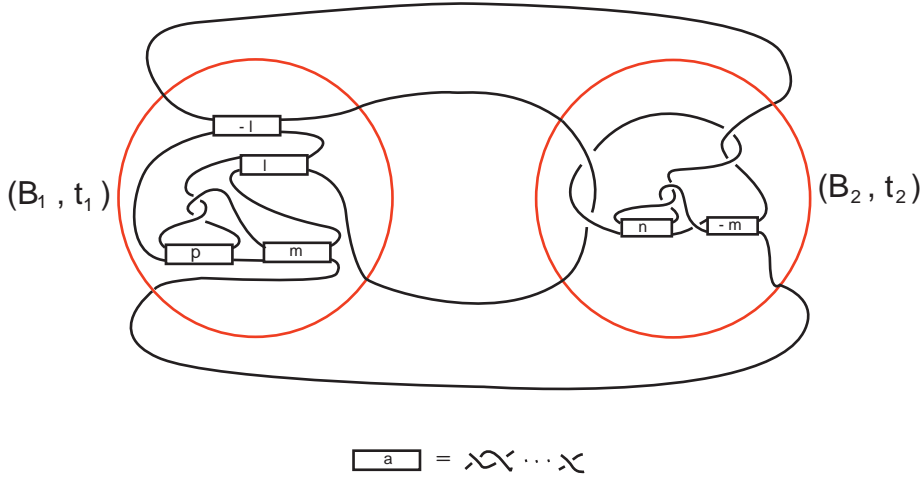}} \caption{The tangle decomposition of the branched link in $S^3$ for the half-integral toroidal surgery}\label{tangle1}
\end{figure}

The double branched cover of the east arc
(as well as the west arc) is a simple closed essential curve in $\p X_1$, which we denote by $\mu_1$,
with which Dehn filling of $X_1$ is a lens space of order $|2p-1|$.
 The double branched cover of the north arc (as well as the south arc)
 is a Seifert fiber of $X_1$, which we denote by $\s_1$, with which Dehn
 filling of $X_1$ is a connected sum of two nontrivial lens spaces of
 orders $|-l|$ and $|-lm(2p-1)+pl+2p-1|$.

 Similarly the numerator  closure of $(B_2,t_2)$ is the twisted knot of type $(2,n)$
and the denominator  closure of $(B_2,t_2)$ is a composite link of
 two nontrivial rational links corresponding to the rational numbers
 $-2$ and $(2(2n-1)(m-1)+4n-1)/(2n-1)(m-1)+n)$.
 So the double branched cover of the north arc of $(B_2,t_2)$
 is a simple closed essential curve in $\p X_2$, denoted $\mu_2$,
with which Dehn filling of $X_2$ is a lens space of order $|2n-1|$, and
 the double branched cover of the west arc
 is a Seifert fiber of $X_2$, denoted by $\s_2$, with which Dehn filling is a connected sum of
 two nontrivial lens spaces of orders $2$ and $|2(2n-1)(m-1)+4n-1|$.

 Finally $M_\sj(r)$ is obtained by gluing $X_1$ and $X_2$ along their boundary tori
 such that $\m_1$ is identified with $\s_2$ and $\s_1$ with $\mu_2$.

Note that when $p=0$ or $1$, $X_1$ is the exterior of a torus knot in $S^3$,
and when $n=0$ or $1$, $X_2$ is the exterior of a $(2,a)$ torus knot in $S^3$.

\begin{lem}\label{only one incomp surface}
Let $J=k(l,m,n,p)$, $r$  the unique half-integral toroidal slope
of $J$ and $M_\sj$ the exterior of $J$.
Up to isotopy, there is a unique closed orientable incompressible surface
in $M_\sj(r)$, which is a torus.
\end{lem}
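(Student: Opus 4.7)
The plan is to exploit the graph-manifold decomposition $M_\sj(r) = X_1 \cup_T X_2$ furnished by the tangle picture preceding the statement, in which each $X_i$ is Seifert fibered over a disk with two cone points and $T = \partial X_1 = \partial X_2$ is the gluing torus. I would first exhibit $T$ as an incompressible torus, and then show that any closed orientable incompressible surface in $M_\sj(r)$ is isotopic to $T$.

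\textbf{Existence.} Under the restrictions \eqref{eq:lmCondition}, all of the lens-space orders appearing in the excerpt's description of $X_1$ and $X_2$ (namely $|2p-1|$, $|2n-1|$, $|{-l}|$, $|{-lm(2p-1)+pl+2p-1}|$, and $|2(2n-1)(m-1)+4n-1|$) are at least $2$. Hence both cone orders on each base orbifold are $\ge 2$, so neither $X_i$ is a solid torus, and each $\partial X_i$ is incompressible in $X_i$. A standard innermost-disk argument then promotes this to incompressibility of $T$ in $M_\sj(r)$.

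\textbf{Uniqueness.} The key observation is that the Seifert fibrations on $X_1$ and $X_2$ do \emph{not} match across $T$: since $\sigma_1$ is glued to $\mu_2$ and filling $X_2$ along $\mu_2$ produces a single lens space (rather than a connected sum of two lens spaces, which would be the effect of filling along a regular fiber), $\mu_2$ is not a regular fiber of $X_2$; symmetrically for $\sigma_2$. Thus $M_\sj(r) = X_1 \cup_T X_2$ is the JSJ decomposition. Now let $S \subset M_\sj(r)$ be any closed orientable incompressible surface, isotoped so that $S \cap T$ is a minimal collection of essential curves. By the Jaco--Shalen--Johannson classification of closed incompressible surfaces in a Haken graph manifold, $S$ is, up to isotopy, either parallel to a JSJ torus or horizontal in each Seifert piece. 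The latter possibility is ruled out because each base orbifold is a disk with nonempty boundary, so horizontal surfaces in $X_i$ have nonempty boundary and cannot assemble to a closed surface. Consequently $S$ is isotopic to $T$.

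\textbf{Main obstacle.} The crux is identifying $T$ as the genuine JSJ torus, i.e.\ verifying that the regular fibers of $X_1$ and $X_2$ are not identified on $T$. This hinges on the fact that Dehn filling of $X_i$ along $\mu_i$ yields a single lens space (and not a connected sum), which is read off directly from the tangle description in Figure~\ref{tangle1}. Once this identification is secured, the remainder of the argument is a routine application of the JSJ/Jaco--Shalen machinery and poses no further computational difficulty.
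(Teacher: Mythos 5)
Your overall strategy---decompose $M_\sj(r)=X_1\cup_T X_2$, verify that $T$ is incompressible, and analyze how a closed incompressible surface meets the two Seifert pieces---is the same as the paper's, but your uniqueness step has a genuine gap. First, the dichotomy you attribute to Jaco--Shalen--Johannson is too strong: after minimizing $|S\cap T|$, all one gets is that each piece $S\cap X_i$ is essential in $X_i$ and hence isotopic to a surface that is \emph{either} a union of fibers \emph{or} transverse to the fibers, independently in each piece; it does not follow that $S$ is parallel to $T$ or else horizontal in both pieces. Second, and more seriously, your dismissal of the horizontal case is not an argument: the fact that a horizontal surface in $X_i$ has nonempty boundary is precisely what would allow two such pieces to be glued along their boundary curves on $T$ into a closed surface. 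Whether they \emph{can} be glued is a question of boundary slopes, which you never compute. The correct accounting (and the paper's) is: a piece consisting of fibers has boundary slope $\s_i$, while a piece transverse to the fibers has boundary slope the rational longitude of $X_i$ (its boundary is null-homologous in $X_i$). Since $\s_i$ is glued to $\m_j$ and $X_j(\m_j)$ is a lens space, $\m_j$ is neither the fiber slope nor the rational longitude of $X_j$; this kills every configuration except the one in which both pieces are transverse to the fibers and the gluing identifies the two rational longitudes. That last case is ruled out not by a boundary count but by homology: identifying the rational longitudes forces $H_1(M_\sj(r);\z)$ to be infinite, whereas a half-integral surgery on a knot in $S^3$ has finite first homology. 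Without this step your proof does not exclude a closed incompressible surface assembled from two horizontal pieces.

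A smaller point on existence: $|2p-1|$ and $|2n-1|$ can equal $1$ (take $p=0$ or $n=0$), so your claim that all the listed ``lens-space orders'' are at least $2$ is not right as stated. What you actually need is that the two cone orders of each base orbifold are at least $2$, equivalently that filling $X_i$ along $\s_i$ gives a connected sum of two \emph{nontrivial} lens spaces; this is what the restrictions (\ref{eq:lmCondition}) guarantee, and it is all that is required to conclude that neither $X_i$ is a solid torus and hence that $T$ is incompressible.
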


\pf As discussed above, $M_\sj(r)$ is a graph manifold
with the torus decomposition
$M_\sj(r)=X_1\cup X_2$.
We just need to show that any connected closed orientable incompressible surface $S$ in $M_\sj(r)$ is isotopic to $\p X_1$.
Suppose otherwise that $S$ is not isotopic to $\p X_1$.
As each $X_i$ does not contain closed essential
 surfaces, $S$ must intersect $\p X_1$.
We may assume that $F_i=S\cap X_i$ is incompressible
and boundary incompressible, for each $i=1,2$.
As $X_i$ is Seifert fibred, $F_i$ is either horizontal (i.e. consisting
of Seifert fibres or
vertical (transverse to Seifert fibres) in $X_i$, up to isotopy.
So  the boundary slope of $F_i$ is  either $\s_i$ (when $F_i$ is
horizontal) or is the rational longitude of $X_i$ (when $F_i$ is vertical).
But $\s_i$ is identified with $\mu_j$, for $i=1,2$, $\{i,j\}=\{1,2\}$.
So the boundary slope of $F_i$ must be the rational longitude of
$X_i$ for each $i$.
It follows that $H_1(M_\sj(r);\z)$ is infinite, yielding a contradiction.
\qed

\begin{lem}\label{meridian not bdy slope}
For each  $J=k(l,m,n,p)$, its meridian slope  is not a boundary slope.
\end{lem}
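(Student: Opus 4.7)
I would argue by contradiction, exploiting the half-integral toroidal filling at slope $r$ together with the structural description $M_\sj(r)=X_1\cup_T X_2$ obtained just above.

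Suppose $\mu$ is a boundary slope, witnessed by an orientable essential surface $F\subset M_\sj$ whose boundary is a nonempty union of meridian curves. Since $M_\sj(\mu)=S^3$ is irreducible and atoroidal, capping off $\p F$ by meridian disks of the $\mu$-filling yields a closed surface $\widehat F\subset S^3$ which must be compressible. A standard sequence of compressions, tracked back into $M_\sj$ and arranged to keep $F$ essential there, reduces to the case in which $F$ is a meridional essential planar surface.

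Next, I would bring in the unique essential torus $T\subset M_\sj(r)$ from Lemma~\ref{only one incomp surface}. Because $J$ is hyperbolic, $M_\sj$ is atoroidal, so $T$ has to meet the core $c$ of the $r$-filling solid torus; after isotoping to minimize $|T\cap c|$, the piece $T_0=T\cap M_\sj$ is a properly embedded essential punctured torus in $M_\sj$ whose boundary has slope $r$. I would then isotope $F$ and $T_0$ so that $F\cap T_0$ is transverse and minimal, and form the pair of intersection graphs $G_F\subset F$ and $G_{T_0}\subset T_0$ in the usual Culler--Gordon--Luecke--Shalen style, with vertices coming from the respective boundary circles and edges from components of $F\cap T_0$. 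The key feature is that $\D(\mu,r)=2$ (because $r$ is half-integral), so each boundary circle of $F$ meets each boundary circle of $T_0$ in exactly two points, producing very rigid labellings around the vertices of both graphs.

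The endgame is then the standard graph-theoretic run: an Euler characteristic count on $G_F$ (using that $F$ is planar) forces the existence of a disk face, which must be essential in $F$ and lie in one of the two Seifert fibred pieces $X_1$, $X_2$; the explicit description of each $X_i$ as Seifert fibred over a disk with two cone points, together with the identifications $\mu_1\leftrightarrow \sigma_2$ and $\sigma_1\leftrightarrow \mu_2$ at $T$, should rule out every admissible face type and yield the contradiction. The main obstacle is this final combinatorial step: one must either find a compressing disk of $T_0$ disjoint from $F$ (which would contradict essentiality of $T$ in $M_\sj(r)$) or else exploit the Seifert structure of the $X_i$ to exclude all possible faces. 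The smallness of $\D(\mu,r)=2$ and the uniqueness of $T$ in $M_\sj(r)$ should make this feasible, but the bookkeeping is where care is needed.
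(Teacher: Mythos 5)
There is a genuine gap here, and it sits exactly where you flagged it: the ``endgame'' is never carried out. The graph-theoretic argument you sketch (intersection graphs of a meridional surface with the punctured torus coming from the toroidal filling, $\D(\m,r)=2$, Euler characteristic count, analysis of disk faces inside the Seifert pieces $X_1,X_2$) is the machinery of \cite{GL2}, and in that setting it occupies dozens of pages; asserting that the face analysis ``should rule out every admissible face type'' is not a proof. Moreover, an earlier step is already unjustified: if $\m$ is a boundary slope, the essential surface $F$ realizing it need not be planar, and ``tracking compressions back into $M_\sj$'' after capping off with meridian disks does not in general produce an essential \emph{planar} meridional surface --- the compressions can destroy essentiality or leave positive genus. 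So both the reduction and the combinatorial conclusion are missing.

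The paper's proof is much shorter and avoids all of this. It applies \cite[Theorem~2.0.3]{CGLS} directly: since $M_\sj(\m)=S^3$ is neither Haken nor reducible, that theorem says that if $\m$ were a boundary slope then $M_\sj$ would contain a closed orientable incompressible surface $S$ (necessarily of genus $>1$, as $M_\sj$ is hyperbolic hence atoroidal) which remains incompressible in $M_\sj(\a)$ for every slope $\a$ with $\D(\m,\a)>1$ --- in particular in the half-integral toroidal filling $M_\sj(r)$, where $\D(\m,r)=2$. This contradicts Lemma~\ref{only one incomp surface}, which says the only closed incompressible surface in $M_\sj(r)$ is a torus. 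In effect, the hard combinatorics you were trying to reproduce is already packaged inside \cite[Theorem~2.0.3]{CGLS}; the efficient move is to quote it rather than rerun it.
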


\pf  If the meridian
slope of $J$ is a boundary slope, then
the knot exterior $M_\sj$
contains a connected closed orientable incompressible surface
$S$ of genus larger than one such that $S$ remains incompressible
in any non-integral surgery, by \cite[Theorem 2.0.3]{CGLS}.
But by Lemma~\ref{only one incomp surface},
 $M_\sj(r)$  does not contain any closed orientable incompressible surface
of genus larger than one.
This contradiction completes the proof.
\qed

\begin{figure}[!ht]
\centerline{\includegraphics{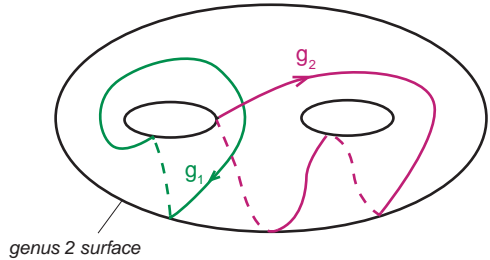}}\caption{The
elements $g_1$ and $g_2$ in $\p H$}\label{genus two surface}
\end{figure}

Lastly in this section we are going to show that
$k(l_*,-1,0,0)$ is a class of small knots.

Let $H$ be a standard genus two handlebody in $S^3$
and let $g_1$ and $g_2$ be oriented loops in the surface $\p H$ as shown
in Figure~\ref{genus two surface}.
 As observed in
\cite[Figure~8]{Berge}, a regular neighborhood of $g_1\cup g_2$ in $\partial H$ is a genus one Seifert surface   of a trefoil knot.
Under the basis $[g_1],[g_2]$, the monodromy of the trefoil is represented by the matrix
$$
\begin{pmatrix}
0 &1\\
-1 &1
\end{pmatrix}.
$$
From \cite[Figure 11]{EM2}, the knot $k(l_*,-1,0,0)$ has a knot diagram as
shown in Figure~\ref{fig (l,-1)} (a), which in turn can be
embedded in  the Seifert surface with the homology class $l_*[g_1]+(l_*+1)[g_2]$
as shown in Figure~\ref{fig (l,-1)} (b).
For any knot $J$ lying in the Seifert surface,
an algorithm to classify all closed essential surfaces in the
knot exterior of $J$ is given in \cite[Theorem 10.1 (3)]{Baker}.
For simplicity, we only state the part that we need:

\begin{figure}[!ht]
\centerline{\includegraphics{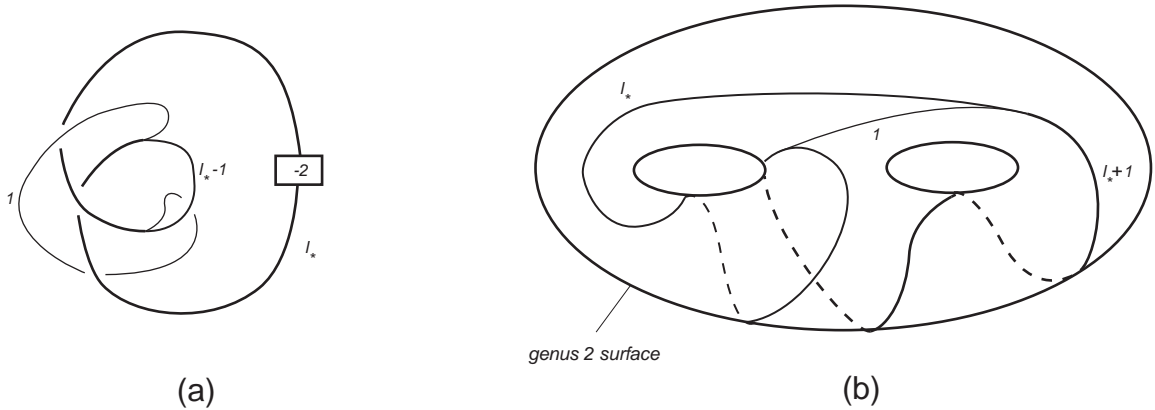}} \caption{The knot
$k(l_*,-1,0,0)$}\label{fig (l,-1)}
\end{figure}

\begin{thm}
Suppose that $J$ is a simple closed  curve on the genus one Seifert surface of a trefoil knot in the homology class $a_1[g_1]+a_2[g_2]$.
The slope $\frac {a_1}{a_2}$ has continued fraction expansion
$$\frac {a_1}{a_2}=[b_1,\dots,b_k]=b_1-\frac1{\displaystyle b_2-\frac1{\displaystyle\ddots-\frac1{b_k}}},$$
where the coefficients alternate signs, $b_i\ne0$ when $i\ge2$, and $|b_k|\ge2$. If $b_1=0$ and $b_2=-1$, then every closed essential surface in
the complement of $L$ corresponds to a solution of the following equation:
\begin{equation}\label{eq:EssSurf}
0=\sum_{i\in I}-b_i+\sum_{j\in J}b_j+
\left\{
\begin{array}{ll}
0, &\text{if }3\in J,\\
-1, &\text{otherwise,}
\end{array}
\right.
\end{equation}
where $I$ and $J$ are subsets of $\{3, . . . , k\}$ each not containing consecutive integers and $3\notin I\cap J$.
\end{thm}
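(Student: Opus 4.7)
The statement is a specialization of Baker's Theorem~10.1(3) \cite{Baker}, so the plan is to outline how his general argument particularizes to our setting. The geometric input is that $J$ lies on the genus-one fiber surface $F$ of the trefoil $T_0$, and the complement $S^3\setminus T_0$ fibers over $S^1$ with fiber $F$ (punctured) and monodromy the matrix displayed above in the basis $[g_1],[g_2]$. The slope $a_1/a_2$ records the direction of $J$ in $F$, and its continued fraction expansion $[b_1,\dots,b_k]$ encodes $J$ as the outcome of an iterative ``twist-and-resolve'' procedure starting from a reference arc in $F$: each coefficient $b_i$ corresponds to a twist region $R_i$ in a neighborhood of $F$, with the alternating signs reflecting that successive twists are performed along $g_1$ and $g_2$ in opposite senses.

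First I would build a branched surface $B\subset S^3\setminus J$ whose branch sectors are indexed by the twist regions $R_3,\dots,R_k$, with the first two coefficients $b_1=0,b_2=-1$ absorbed into the fiber $F$ itself. Next I would invoke the Floyd--Oertel machinery (in the form used by Baker) to show that every closed incompressible surface in $S^3\setminus J$ can be isotoped to be fully carried by $B$; this uses that outside the twist regions the knot exterior has a product structure, so pieces of a normal surface in each region are forced into a small combinatorial list. A carried surface is then recorded by a nonnegative weight vector on the sectors of $B$ satisfying the branch (switch) equations. These equations translate into the choice of two subsets $I,J\subset\{3,\dots,k\}$ specifying which twist regions contribute twists of each sign; the ``no consecutive integers'' constraint is exactly the switch condition at adjacent branch loci, and the special role of the index $3$ comes from the boundary condition at the interface with the first two (exceptional) twist regions, which accounts for the extra $-1$ when $3\notin J$. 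The signed sum in \eqref{eq:EssSurf} computes the Euler characteristic of the carried surface, so requiring $\chi=0$ (the only possibility for a closed essential orientable surface in an atoroidal/toroidal knot exterior) yields exactly the stated equation.

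The main obstacle is the branched-surface carrying step: one must check that no essential surface escapes $B$, which requires a careful case analysis of how a normal surface can sit in each twist region and a compression argument ruling out the degenerate configurations (boundary-parallel tori, fiber-like surfaces, and surfaces meeting $\partial N(J)$). The remaining combinatorics, once carrying is established, is bookkeeping: translating the switch equations into the signed sum and verifying the parity conditions on $I,J$. Rather than reproduce Baker's full normal-surface analysis, I would simply cite \cite[Theorem~10.1(3)]{Baker} for the carrying step and use it as a black box, since in our application we will only need to plug specific $(b_1,b_2,b_3,\dots)$ data for the slopes realized by $k(l_*,-1,0,0)$ into \eqref{eq:EssSurf} and show the equation has no admissible solution.
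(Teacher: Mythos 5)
The paper gives no proof of this statement at all: it is quoted directly as (the relevant part of) Baker's Theorem~10.1(3) \cite{Baker}, which is exactly the black-box citation you settle on in your final paragraph, so your approach coincides with the paper's. The speculative reconstruction of Baker's argument in the middle is not relied upon and need not be checked (though note that reading equation (\ref{eq:EssSurf}) as a ``$\chi=0$'' condition is dubious, since the closed essential surfaces in question have genus at least two); the citation alone is all the paper uses.
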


\begin{lem}\label{lem:Small}
Each $k(l_*,-1,0,0)$ is a small knot. Namely, the exterior  of $k(l_*,-1,0,0)$ contains no closed essential surfaces.
\end{lem}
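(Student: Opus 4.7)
The plan is to apply the preceding classification theorem directly to $J=k(l_*,-1,0,0)$, which, as observed just before the lemma, lies on the genus-one Seifert surface of a trefoil in the homology class $l_*[g_1]+(l_*+1)[g_2]$, so that the relevant slope is $\frac{a_1}{a_2}=\frac{l_*}{l_*+1}$.

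First, I would put this slope into the alternating-sign continued fraction form required by the theorem. A short direct verification shows
$$\frac{l_*}{l_*+1}=[0,-1,l_*]=0-\frac{1}{-1-\frac{1}{l_*}},$$
so $b_1=0$, $b_2=-1$, $b_3=l_*$, and $k=3$. Since $l_*>1$ we have $|b_k|\ge 2$ and the signs alternate from $b_2$ on, so the hypotheses of the theorem, including the special assumption $b_1=0$ and $b_2=-1$, are met; hence every closed essential surface in the exterior of $J$ must correspond to a solution of equation (\ref{eq:EssSurf}) with $I,J\subseteq\{3,\dots,k\}=\{3\}$.

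Second, I would enumerate the (very few) admissible pairs $(I,J)$. Since each of $I,J$ is a subset of the singleton $\{3\}$, the no-consecutive-integers condition is automatic, and the only constraint left is $3\notin I\cap J$. The three admissible pairs give the following values for the right-hand side of (\ref{eq:EssSurf}):
\begin{align*}
(I,J)=(\emptyset,\emptyset) &: \quad -1,\\
(I,J)=(\{3\},\emptyset) &: \quad -l_*-1,\\
(I,J)=(\emptyset,\{3\}) &: \quad l_*.
\end{align*}

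Finally, since $l_*>1$, none of these three quantities is zero, so (\ref{eq:EssSurf}) has no solution; by the theorem, the exterior of $k(l_*,-1,0,0)$ contains no closed essential surface, which is exactly the lemma. The only real obstacle is correctly normalizing the continued fraction so that the alternating-sign convention of the theorem applies; once this is done the enumeration is immediate and finite.
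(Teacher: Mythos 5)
Your proof is correct and is essentially the paper's own argument: the paper likewise computes $\frac{a_1}{a_2}=\frac{l_*}{l_*+1}=[0,-1,l_*]$ and then observes that Equation (\ref{eq:EssSurf}) has no solution, with your enumeration of the three admissible pairs $(I,J)$ simply making explicit the "clearly" in the paper. No issues.
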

\begin{proof}
In this case $a_1=l_*, a_2=l_*+1$, $\frac{a_1}{a_2}=[0,-1,l_*]$. Clearly, Equation (\ref{eq:EssSurf}) has no solution. So $k(l_*,-1,0,0)$ is small.
\end{proof}


\section{Information on  A-polynomials of $k(l,m,n,p)$}\label{A-poly of J}

To obtain explicit expressions for the $A$-polynomials of the knots
$k(l,m, n,p)$ could be a very tough task.
But we can obtain some useful info about
the  $A$-polynomials of these knots
without computing them explicitly.

\begin{lem}\label{lem:AbelRep}
Suppose a graph manifold $W$ is obtained by gluing two torus knot exteriors
$X_1,X_2$ together, such that the meridian of $X_i$ is glued to the Seifert
fiber of $X_{i+1}$, $i=1,2$, where $X_3=X_1$. Then $\pi_1(W)$ has no
non-cyclic $SL_2(\mathbb C)$ representations.
\end{lem}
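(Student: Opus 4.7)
The plan is to prove that every representation $\rho\co\pi_1(W)\to SL_2(\c)$ has abelian image, and then to observe that $H_1(W)$ is cyclic, forcing the image itself to be cyclic. Let $X_i$ be the exterior of the torus knot $T(p_i,q_i)$, so $\pi_1(X_i)=\langle a_i,b_i\mid a_i^{p_i}=b_i^{q_i}\rangle$ with Seifert fiber $\sigma_i=a_i^{p_i}$ generating the center of $\pi_1(X_i)$. Write $\mu_i$ for the meridian of $X_i$. The gluing hypothesis identifies $\mu_i=\sigma_{i+1}$ (indices mod~$2$) inside $\pi_1(W)$.

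Two standard facts drive the argument. \emph{Centralizers:} the centralizer of a non-central element of $SL_2(\c)$ is an abelian subgroup --- a maximal torus if the element is semisimple, a one-parameter unipotent subgroup together with its $-I$ coset if the element is unipotent --- and two commuting non-central elements lie in a common such abelian subgroup, which equals the centralizer of either of them. \emph{Meridian is normally generating:} for a knot $K\subset S^3$, Dehn filling along $\mu$ yields $S^3$, so the normal closure of $\mu$ in $\pi_1(S^3\setminus K)$ is the whole group; consequently, if the composition $\pi_1(X_i)\to SL_2(\c)\to PSL_2(\c)$ sends $\mu_i$ to the identity, it is trivial and $\rho(\pi_1(X_i))\subseteq\{\pm I\}$.

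Now split into cases. If $\rho(\sigma_1)=\pm I$, then $\rho(\mu_2)=\pm I$, so the second fact gives $\rho(\pi_1(X_2))\subseteq\{\pm I\}$; this forces $\rho(\sigma_2)=\pm I$, hence $\rho(\mu_1)=\pm I$, and the same reasoning yields $\rho(\pi_1(X_1))\subseteq\{\pm I\}$. Symmetrically if $\rho(\sigma_2)=\pm I$. Thus in either of these cases $\rho(\pi_1(W))\subseteq\{\pm I\}$. Otherwise both $\rho(\sigma_i)$ are non-central, so by the first fact each $\rho(\pi_1(X_i))\subseteq C(\rho(\sigma_i))$ is abelian; moreover $\rho(\sigma_2)=\rho(\mu_1)\in\rho(\pi_1(X_1))\subseteq C(\rho(\sigma_1))$, so $\rho(\sigma_1)$ and $\rho(\sigma_2)$ commute. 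The centralizer statement then yields $C(\rho(\sigma_1))=C(\rho(\sigma_2))=:A$, a single abelian subgroup containing both $\rho(\pi_1(X_i))$; since $\pi_1(W)$ is generated by $\pi_1(X_1)\cup\pi_1(X_2)$, we conclude $\rho(\pi_1(W))\subseteq A$.

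Once the image is abelian, $\rho$ factors through $H_1(W)$. A short Mayer--Vietoris calculation using $\sigma_i=p_iq_i\mu_i$ in $H_1(X_i)$ together with the gluing relations $\mu_1=p_2q_2\mu_2$ and $p_1q_1\mu_1=\mu_2$ yields $H_1(W)\cong\z/(p_1q_1p_2q_2-1)$, which is cyclic. Hence $\rho(\pi_1(W))$ is cyclic. The one step requiring care is the centralizer statement in the non-central case: when $\rho(\sigma_1)$ is unipotent, I need to check that a commuting non-central $\rho(\sigma_2)$ is forced into the same unipotent-plus-$\{\pm I\}$ subgroup, which is immediate from the explicit form of centralizers in $SL_2(\c)$; the rest follows directly from the gluing hypothesis.
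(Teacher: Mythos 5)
Your proof is correct and follows essentially the same route as the paper's: the same case split on whether the images of the Seifert fibers are central in $SL_2(\mathbb C)$, the same use of the fact that the meridian normally generates a knot group in the central case, and the same commuting-matrices/centralizer argument (the paper's Lemma on commuting elements of $SL_2(\mathbb C)$) in the non-central case. The only addition is your explicit Mayer--Vietoris computation of $H_1(W)$, which the paper simply asserts to be cyclic.
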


To prove this lemma,
we will use the following well-known fact whose proof is elementary.

\begin{lem}\label{lem:SL2comm}
Suppose that $\tensor A,\tensor B\in SL_2(\mathbb C)$ are two commuting matrices, $\tensor A\ne\pm  I$.
\newline(i) If there exits $\tensor P\in SL_2(\mathbb C)$ such that
\begin{equation}\label{eq:Diag}
\tensor P\tensor A\tensor P^{-1}=\begin{pmatrix}
\lambda & 0\\
0 &\lambda^{-1}
\end{pmatrix},\qquad\text{for some }  \lambda\in\mathbb C\setminus\{0,\pm1\},
\end{equation}
then
$$\tensor P\tensor B\tensor P^{-1}=\begin{pmatrix}
\mu & 0\\
0 &\mu^{-1}
\end{pmatrix},\qquad\text{for some } \mu\in\mathbb C\setminus\{0\};
$$
\newline(ii) If there exits $\tensor P\in SL_2(\mathbb C)$ such that
\begin{equation}\label{eq:Para}
\tensor P\tensor A\tensor P^{-1}=\pm\begin{pmatrix}
1 & a\\
0 &1
\end{pmatrix},\qquad\text{for some }  a\in\mathbb C\setminus\{0\},
\end{equation}
then
$$\tensor P\tensor B\tensor P^{-1}=\pm\begin{pmatrix}
1 & b\\
0 &1
\end{pmatrix},\qquad\text{for some }  b\in\mathbb C.
$$
\end{lem}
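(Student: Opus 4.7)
The plan is to reduce each part to a direct $2\times 2$ matrix computation after conjugating so that $\tensor A$ is in its standard form. Since conjugation by $\tensor P$ preserves the commutation relation $\tensor A \tensor B = \tensor B \tensor A$, and since we are asked for the form of $\tensor P \tensor B \tensor P^{-1}$ under the same $\tensor P$ that normalizes $\tensor A$, I may assume at the outset that $\tensor A$ itself equals its given normal form.

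For part (i), with $\tensor A = \operatorname{diag}(\lambda, \lambda^{-1})$, write $\tensor B = \begin{pmatrix} a & b \\ c & d \end{pmatrix}$ and expand both $\tensor A \tensor B$ and $\tensor B \tensor A$. The diagonal entries match automatically, while the off-diagonal entries yield $b(\lambda - \lambda^{-1}) = 0$ and $c(\lambda^{-1} - \lambda) = 0$. The hypothesis $\lambda \ne 0, \pm 1$ makes $\lambda - \lambda^{-1} \ne 0$, so $b = c = 0$; then $\det \tensor B = ad = 1$ forces $d = a^{-1}$, giving the conclusion with $\mu = a$.

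For part (ii), assume $\tensor A = \epsilon \begin{pmatrix} 1 & a \\ 0 & 1 \end{pmatrix}$ with $\epsilon \in \{1, -1\}$ and $a \ne 0$; the central scalar $\epsilon$ drops out of the commutator. Writing $\tensor B = \begin{pmatrix} p & q \\ r & s \end{pmatrix}$ and equating $\tensor A \tensor B$ with $\tensor B \tensor A$, the $(1,1)$-entry comparison gives $ar = 0$, hence $r = 0$, and the $(1,2)$-entry gives $a(s - p) = 0$, hence $s = p$. The determinant condition $\det \tensor B = p^2 = 1$ then yields $p = s = \pm 1$, so $\tensor B = \pm \begin{pmatrix} 1 & b \\ 0 & 1 \end{pmatrix}$ with $b = \pm q$.

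There is no serious obstacle: the argument rests on matrix multiplication together with the observation that the hypotheses on $\tensor A$ (either $\lambda \ne \pm 1$ in the semisimple case or $a \ne 0$ in the parabolic case) are exactly what makes the relevant scalar coefficient in the commutator equation nonzero, forcing the vanishing of the off-diagonal entry of $\tensor B$. The lemma is just the familiar classification of the centralizer of a non-central element of $SL_2(\mathbb{C})$ as either a maximal torus or a one-parameter unipotent subgroup, extended by sign.
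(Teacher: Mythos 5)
Your computation is correct: in both cases the commutation relation forces the off-diagonal entries of $\tensor B$ (in the normalizing basis) to vanish or the diagonal entries to coincide, exactly as you show, and the determinant condition pins down the remaining freedom. The paper itself omits the proof, calling the lemma a well-known elementary fact, and your argument is precisely the standard centralizer computation that the authors had in mind, so there is nothing to reconcile.
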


\begin{proof}[Proof of Lemma~\ref{lem:AbelRep}]
As $W$ has cyclic homology group, it is equivalent to show
that every $SL_2(\c)$-representation of $\pi_1(W)$ is abelian.

We choose a base point of $W$ on the common boundary torus $T$ of $X_1,X_2$, then $\pi_1(T), \pi_1(X_1),\pi_1(X_2)$ are naturally the subgroups of $\pi_1(W)$.
Suppose that $\rho\co\pi_1(W)\to SL_2(\mathbb C)$ is a representation.

Let $f_i\in\pi_1(T)$ represent the Seifert fiber of $X_i$, $i=1,2$. We first consider the case that one of ${\rho}(f_1),{\rho}(f_2)$, say ${\rho}(f_1)$, is in $\{\pm I\}$ which is the center of $SL_2(\mathbb C)$.
Since $f_1$ represents the meridian of $X_2$, $f_1$ normally generates $\pi_1(X_2)$, hence ${\rho}(\pi_1(X_2))$ is contained in $\{\pm  I\}$. Since $f_2\in\pi_1(X_2)$, we see that ${\rho}(f_2)$ is also  in $\{\pm  I\}$, and thus for the same reason as just given ${\rho}(\pi_1(X_1))$ is in $\{\pm  I\}$. Hence $\rho(\pi(W))$ is contained  in $\{\pm I\}$.

Now suppose that neither  of ${\rho}(f_1),{\rho}(f_2)$ is in $\{\pm  I\}$, then there exists $\tensor P\in SL_2(\mathbb C)$ such that $\tensor P{\rho}(f_1)\tensor P^{-1}$ is in the form of either (\ref{eq:Diag}) or (\ref{eq:Para}).  Since $f_1$ is in the center of $\pi_1(X_1)$, by Lemma~\ref{lem:SL2comm}, $\tensor P\rho(\pi_1(X_1))\tensor P^{-1}$ (including $\tensor P{\rho}(f_2)\tensor P^{-1}$) is contained in an abelian subgroup $A$ of $SL_2(\mathbb C)$ ($A$ is either the set of diagonal matrices or the set of upper triangular trace $\pm 2$ matrices). Since $f_2$ is in the center of $\pi_1(X_2)$, again by Lemma~\ref{lem:SL2comm}, $\tensor P\rho(\pi_1(X_2))\tensor P^{-1}$ is also contained in $A$.
Hence ${\rho}(\pi_1(W))$, being generated by ${\rho}(\pi_1(X_1)),{\rho}(\pi_1(X_2))$, is an abelian group.
\end{proof}

\begin{cor}\label{prop:AbelRep}
The fundamental group of the half-integral  toroidal surgery on
$J=k(l,m,0,0)$ has no non-cyclic $SL_2(\mathbb C)$
representations and has no non-cyclic $PSL_2(\c)$ representations.
\end{cor}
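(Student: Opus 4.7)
The strategy is to identify $M_J(r)$ for $J = k(l,m,0,0)$ as an instance of the gluing treated by Lemma~\ref{lem:AbelRep}, and then handle the $PSL_2(\c)$ half by a lifting argument.

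First I would unpack the decomposition $M_J(r) = X_1 \cup X_2$ from the discussion preceding Lemma~\ref{only one incomp surface} under the hypothesis $p = n = 0$. Since $|2p-1| = 1$, Dehn filling $X_1$ along $\mu_1$ yields $S^3$, so $X_1$ is a torus knot exterior (as the excerpt explicitly notes for $p = 0$) with $\mu_1$ its meridian and $\sigma_1$ a regular Seifert fiber. Likewise $|2n-1| = 1$ makes $X_2$ the exterior of a $(2,a)$-torus knot, with $\mu_2$ its meridian and $\sigma_2$ a regular fiber. The gluing $\mu_1 = \sigma_2$, $\sigma_1 = \mu_2$ is exactly the hypothesis of Lemma~\ref{lem:AbelRep}. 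That lemma then shows every $SL_2(\c)$ representation of $\pi_1(M_J(r))$ has abelian image; since $H_1(M_J(r); \z)$ is cyclic (it is $\z$ modulo the surgery slope), every such representation has cyclic image, proving the first assertion.

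For the $PSL_2(\c)$ assertion, I would argue that every representation $\bar\rho\co \pi_1(M_J(r)) \to PSL_2(\c)$ lifts through $SL_2(\c) \to PSL_2(\c)$. Setting $p = n = 0$ in (\ref{eq:rSlope}) gives $r = l(2m-1)(1-lm) - \tfrac12$, whose numerator $2l(2m-1)(1-lm) - 1$ is odd, so $H_1(M_J(r); \z)$ is finite of odd order. Hence $H_1(M_J(r); \2) = 0$, and Poincar\'e duality on the closed orientable $3$-manifold $M_J(r)$ gives $H^2(M_J(r); \2) = 0$. Because $M_J(r)$ is an irreducible graph manifold with infinite fundamental group it is aspherical, so this agrees with $H^2(\pi_1(M_J(r)); \2)$, which is where the obstruction to lifting $\bar\rho$ across the central extension $1 \to \2 \to SL_2(\c) \to PSL_2(\c) \to 1$ lives. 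The obstruction thus vanishes, a lift exists, and applying the first half of the argument to the lift shows that the image of $\bar\rho$ is cyclic.

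The main point to check carefully is the matching of the hypothesis of Lemma~\ref{lem:AbelRep} with the setup for $k(l,m,0,0)$, namely that for $p = n = 0$ both pieces $X_1, X_2$ really are torus knot exteriors and the gluing identifies meridian-to-fiber in both directions; once that identification is pinned down, the $SL_2(\c)$ conclusion is immediate, and the $PSL_2(\c)$ conclusion is formal from the odd order of $H_1(M_J(r); \z)$.
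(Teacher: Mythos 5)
Your proof is correct and follows the paper's own route: the $SL_2(\c)$ statement is obtained exactly as in the paper by recognizing $M_J(r)=X_1\cup X_2$ (with $p=n=0$, so both pieces are torus knot exteriors glued meridian-to-fiber) as an instance of Lemma~\ref{lem:AbelRep}, and the $PSL_2(\c)$ statement follows from the lifting criterion. The paper simply asserts that odd cyclic first homology forces every $PSL_2(\c)$ representation to lift; you supply the standard obstruction-theoretic justification (vanishing of the class in $H^2(\cdot;\2)$), which is a welcome but inessential elaboration --- note that the injectivity of $H^2(\pi_1(M);\2)\to H^2(M;\2)$ from the five-term exact sequence would let you avoid invoking asphericity altogether.
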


\pf By the discussion preceding Lemma~\ref{only one incomp surface},
the half-integral toroidal surgery, $M_\sj(r)$, is a graph manifold
satisfying the conditions of Lemma~\ref{lem:AbelRep}.
Hence $M_\sj(r)$ has no non-cyclic $SL_2(\c)$ representations.
The manifold cannot have non-cyclic $PSL_2(\c)$ representations either
since every $PSL_2(\c)$ representation of $M_\sj(r)$ lifts to
a $SL_2(\c)$-representation because the manifold has odd cyclic
first homology.
\qed

\begin{lem}\label{r is a vertex}Let $M_\sj$ be the knot exterior of any given hyperbolic  knot $J=k(l,m,0,0)$.
Let $X_0$ be any norm curve   in $X(M_\sj)$ and $B_0$ the
norm polygon determined by $X_0$. Then the half-integral toroidal slope
$r=d/2$ of $J$ is associated to a vertex of $B_0$ as described in Theorem~\ref{properties of a norm curve} (3), i.e.  $2/d$ is
the slope of a vertex of $B_0$ in the $xy$-plane $H_1(\p M_\sj; \mathbb R)$.
\end{lem}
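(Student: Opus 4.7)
\pf The plan is to argue by contradiction: assume no vertex of $B_0$ lies on the ray from the origin through the primitive class $(d,2) \in H_1(\p M_\sj; \z)$; since $d$ is odd, $(d,2)$ is primitive and non-integral. I will derive a contradiction.

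First, by Lemma~\ref{meridian not bdy slope} the meridian $\m$ is not a boundary slope, so Theorem~\ref{properties of a norm curve}(5) applies: as $(d,2)$ is non-integral, were it on $\p B_0$ it would itself have to be a vertex of the polygon, contradicting our assumption. Hence $(d,2) \notin \p B_0$, and since $(d,2)$ is a nonzero lattice class, $\|(d,2)\|_0 > s_0$. Next, Corollary~\ref{prop:AbelRep} says $M_\sj(r)$ admits no noncyclic $PSL_2(\c)$ representations and no noncyclic $SL_2(\c)$ representations. If $(d,2)$ were not a boundary class of $M_\sj$, Theorem~\ref{properties of a norm curve}(4) would force $(d,2) \in \p B_0$, contradicting what we just showed. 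So $r = d/2$ must be a boundary slope of $M_\sj$.

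It remains to upgrade ``$r$ is a boundary slope'' to ``$r$ is a vertex direction of $B_0$'', which is the heart of the argument. The plan is to analyze the extension $\tilde f_r$ of $f_r$ to the smooth projective completion $\tilde X_0$; its degree equals $\|r\|_0 > s_0$. A zero at a regular character $\chi_\r \in X_0$ requires $\r(r) = \pm I$ or $\r(r)$ strictly parabolic. Characters of the first type factor through $\pi_1(M_\sj(r))$ as cyclic representations (by Corollary~\ref{prop:AbelRep}) and hence lie on the trivial component of $X(M_\sj)$; their contribution to $X_0$ is a finite set, being the intersection of two distinct irreducible subvarieties. Characters of the second type also form a finite set because the norm-curve hypothesis forces $f_r$ to be non-constant on $X_0$. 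Thus $\tilde f_r$ must vanish at some ideal point of $\tilde X_0$, and by the standard Culler--Shalen dictionary such an ideal point has $r$ as its associated boundary slope, yielding a vertex of $B_0$ on the ray through $(d,2)$ and contradicting our standing assumption.

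The main obstacle is this last step: making the Culler--Shalen analysis precise by carefully bounding the total multiplicity of regular zeros of $\tilde f_r$ (controlled by Corollary~\ref{prop:AbelRep} and the finiteness of $X_0$'s intersection with the trivial component) and invoking the bijection between ideal points of $\tilde X_0$ carrying the slope $r$ and vertex directions of $B_0$ on the ray through $(d,2)$. \qed
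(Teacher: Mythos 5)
Your opening moves match the paper's: by Lemma~\ref{meridian not bdy slope} and Theorem~\ref{properties of a norm curve} (4)(5), $\m$ lies on $\p B_0$ while $(d,2)$ does not, so $\deg(\tilde f_r)=\|r\|_0>s_0=\|\m\|_0=\deg(\tilde f_\m)$. But the two steps you yourself flag as ``the heart of the argument'' both contain genuine gaps. First, the deduction that $\tilde f_r$ must vanish at an ideal point does not follow from the finiteness of its set of regular zeros: any non-constant rational function on $\tilde X_0$ has only finitely many zeros anyway, and nothing you establish prevents all of them, with their multiplicities, from sitting at regular points. What is actually needed is a multiplicity bound at regular points, namely $Z_v(\tilde f_r)\leq Z_v(\tilde f_\m)$ for every regular $v$, which is the content of \cite[Proposition 1.5.2]{CGLS} (or \cite[Proposition 4.8]{BZ4}) once one knows $M_\sj(r)$ has no noncyclic representations (Corollary~\ref{prop:AbelRep}); comparing with $\deg(\tilde f_r)>\deg(\tilde f_\m)$ then forces a point $v$ with $Z_v(\tilde f_r)>Z_v(\tilde f_\m)$, which must therefore be ideal. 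You gesture at ``bounding the total multiplicity'' but defer it, and the finiteness statements you do prove supply no such bound.

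Second, and more seriously, the ``standard Culler--Shalen dictionary'' does not say that an ideal point where $\tilde f_r$ vanishes has $r$ as its associated boundary slope. At an ideal point $v$ there are two cases: either some $\tilde f_\a$ has a pole at $v$, in which case the unique slope with finite value at $v$ is the boundary slope of a vertex of $B_0$ (and then $\tilde f_r(v)=0$ being finite would indeed force $r$ to be that slope, contradicting your standing assumption); or else $\tilde f_\a(v)$ is finite for every class $\a$, in which case $v$ detects a closed essential surface in $M_\sj$ and contributes no vertex of $B_0$ at all. Your argument silently excludes the second case, yet that is precisely the case requiring work: the paper invokes \cite[Proposition 4.12]{BZ4} to obtain a closed essential surface $S$ such that if $S$ compresses in both $M_\sj(r)$ and $M_\sj(\a)$ then $\D(r,\a)\leq 1$, and then uses Lemma~\ref{only one incomp surface} (the only closed incompressible surface in $M_\sj(r)$ is a torus, so $S$, of genus at least two since $J$ is hyperbolic, must compress there) together with the fact that $S$ compresses in $M_\sj(\m)=S^3$ and $\D(r,\m)=2$ to reach the contradiction. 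Without Lemma~\ref{only one incomp surface} --- which your proposal never uses --- this case cannot be closed.
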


\pf Suppose otherwise that $r$ is not associated to a vertex  of $B_0$.
As the meridian slope of $M_\sj$ is not a boundary slope by Lemma~\ref{meridian not bdy slope},
it follows from Theorem~\ref{properties of a norm curve} (4) and (5)
that $\m$ is contained in $\p B_0$ but $r$ is not, which means
that $Z_v(\tilde f_r)>Z_v(\tilde f_\m)$ for some point $v\in \tilde X_0$.
As $M_\sj(r)$ has no noncyclic  representations by Lemma~\ref{prop:AbelRep},
the point $v$ cannot be a regular point of $\tilde X_0$ (by \cite[Proposition 1.5.2]{CGLS} or \cite[Proposition 4.8]
{BZ4}).
So $v$ is an ideal point of $\tilde X_0$.
As $r$ is not a slope  associated to any vertex of $B_0$,
$\tilde f_\a(v)$ is finite for every class
$\a$ in $H_1(\p M_\sj; \z)$.
Now we may apply \cite[Proposition 4.12]{BZ4} to see that $M_\sj$ contains a
closed essential surface $S$ such that if $S$ compresses in
$M_\sj(r)$ and $M_\sj(\a)$, then $\D(r,\a)\leq 1$.
By
Lemma~\ref{only one incomp surface}, $S$ must  compress in $M_\sj(r)$
and of course $S$ compresses in $M(\m)$.
But $\D(r,\m)=2$. We arrive at a contradiction.
\qed

\begin{lem}\label{J has only 3 roots}Let $A_\sj(x,y)$ be the $A$-polynomial of
 any given hyperbolic knot $J=k(l,m,0,0)$.
 Let $r=d/2$ be the half-integral toroidal slope of $J$.
 If $(x_0,y_0)$ is a solution
of the system
$$\left\{\begin{array}{ll}A_\sj(x,y)&=0,\\
x^dy^2-1&=0,\end{array}\right.
$$
then $x_0\in \{0, 1,-1\}$.
 \end{lem}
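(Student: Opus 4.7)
Plan: Translate the algebraic system into representation-theoretic language and apply Corollary~\ref{prop:AbelRep}. Since $x^d y^2 = 1$ forces $x_0, y_0 \in \c^*$, the condition $A_\sj(x_0, y_0) = 0$ yields a character $\chi \in X^*(M_\sj)$ with boundary eigenvalue pair $(x_0, y_0)$, realized by some $\r \co \pi_1(M_\sj) \to SL_2(\c)$. I would argue by contradiction: assume $x_0 \notin \{0, 1, -1\}$ (so $x_0 \ne \pm 1$) and split according to whether $\r$ is irreducible or reducible.

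Suppose first that $\r$ is irreducible. Since $x_0 \ne \pm 1$, $\r(\m)$ is diagonalizable with distinct eigenvalues $x_0, x_0^{-1}$, so after conjugation $\r(\m) = \mathrm{diag}(x_0, x_0^{-1})$. The commutativity of $\m$ and $\l$ in $\pi_1(\p M_\sj)$ then forces $\r(\l)$ to be diagonal in this basis, and balanced-ness of $A_\sj$ lets one take $\r(\l) = \mathrm{diag}(y_0, y_0^{-1})$. The hypothesis $x_0^d y_0^2 = 1$ now gives $\r(\m^d \l^2) = \mathrm{diag}(x_0^d y_0^2, x_0^{-d} y_0^{-2}) = I$, so $\r$ descends to an irreducible $SL_2(\c)$-representation of $\pi_1(M_\sj(r))$. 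This directly contradicts Corollary~\ref{prop:AbelRep}, which forces every $SL_2(\c)$-representation there to be cyclic, hence reducible.

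Suppose instead that $\r$ is reducible. Conjugating to upper-triangular form, the diagonal entries define an abelian homomorphism $\a \co \pi_1(M_\sj) \to \c^*$ with $\chi(g) = \a(g) + \a(g)^{-1}$. Since $\l$ is trivial in $H_1(M_\sj;\z) = \z$, $\a(\l) = 1$, so $\chi(\l) = 2$ and hence $y_0 = 1$. Together with $x_0^d y_0^2 = 1$ this yields $x_0^d = 1$. To conclude $x_0 \in \{1, -1\}$ I would invoke the classical theorem of de~Rham/Burde (as in \cite{BZ4}) that a reducible character on a non-trivial component $X_0 \in X^*(M_\sj)$ has $\a(\m)^2$ equal to a root of $\D_\sj(t)$; thus $x_0^2$ would be simultaneously a $d$-th root of unity and a root of $\D_\sj$. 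The main obstacle is showing this cannot occur for $x_0 \ne \pm 1$: since $\D_\sj(1) = \pm 1$ and $d$ is odd (as $r = d/2$ is half-integral), the needed statement is $\gcd(\D_\sj(t),\, t^d - 1) = 1$. I expect this to follow from combining the explicit genus formula of Proposition~\ref{prop:Genus} with the graph-manifold description $M_\sj(r) = X_1 \cup X_2$ from Section~\ref{topological properties of J}, for instance via a first-homology analysis of the cyclic branched covers of $J$; this Alexander-polynomial step is the subtle part, in contrast to the clean representation-theoretic treatment of the irreducible case.
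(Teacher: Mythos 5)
Your plan has a genuine gap at its very first step: you assume that a solution $(x_0,y_0)$ of $A_\sj(x,y)=0$ is ``realized by some $\r\co\pi_1(M_\sj)\to SL_2(\c)$''. The plane curve $\{A_\sj=0\}$ is by definition the \emph{Zariski closure} of the image of $(t|_\Lambda)^{-1}(\widehat{i}_*(X^*(M_\sj)))$ under the eigenvalue map, and the image of a curve under a regular map is in general only a constructible set: finitely many points of the closure may fail to lie in the image. Those missing points correspond to ideal points of the smooth projective model $\tilde X_0$, at which no representation of $\pi_1(M_\sj)$ realizes the eigenvalue pair. This is not a pedantic worry --- it is precisely the hard case, and the bulk of the paper's proof is devoted to it. The paper takes the limit $v\in\tilde X_0$ of a sequence of regular points $v_i$ with $\tilde f_r(v_i)\to 0$ and $\tilde f_\m(v_i)\to (x_0+x_0^{-1})^2-4\ne 0$; shows $\tilde f_r$ is non-constant on $X_0$ (via Theorem~\ref{properties of a semi-norm curve} and Lemma~\ref{meridian not bdy slope}); rules out $v$ being a regular point using Corollary~\ref{prop:AbelRep} together with \cite[Proposition 1.5.2]{CGLS}; and then rules out $v$ being an ideal point by \cite[Proposition 4.12]{BZ4} combined with the uniqueness of the closed incompressible surface in $M_\sj(r)$ (Lemma~\ref{only one incomp surface}) and the fact that $\D(r,\m)=2$. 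Your argument has no mechanism for this ideal-point case, and without it the lemma is not proved.

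Your treatment of the case where a representation does exist is also only partly satisfactory. The irreducible subcase is fine and agrees in spirit with the paper's regular-point step (factor through $\pi_1(M_\sj(r))$ and contradict Corollary~\ref{prop:AbelRep}). But the reducible subcase is left hanging on the unproved assertion $\gcd(\D_\sj(t),t^d-1)=1$, which you only ``expect'' to hold; nothing in the paper supplies this, and it is not needed. The results \cite[Proposition 1.5.2]{CGLS} and \cite[Proposition 4.8]{BZ4} invoked by the paper treat reducible characters at regular points uniformly: whenever $Z_v(\tilde f_r)>Z_v(\tilde f_\m)$ at a regular point $v$, they produce a noncyclic representation of $\pi_1(M_\sj(r))$ by deforming the (possibly reducible) representation, so no Alexander-polynomial input is required. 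I would recommend restructuring the proof around the zero-degree comparison $Z_v(\tilde f_r)>Z_v(\tilde f_\m)$ on $\tilde X_0$, as the paper does, rather than around a case division on an individual representation.
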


\pf Suppose otherwise that $x_0\notin \{0, 1,-1\}$.
Then by the constructional definition of the $A$-polynomial,
there is a component  $X_1$ in $X^*(M_\sj)$ which
contributes a factor $f_0(x,y)$ in $A_\sj(x,y)$
such that $(x_0, y_0)$ is a solution of
$$\left\{\begin{array}{ll}f_0(x,y)&=0,\\
x^dy^2-1&=0,\end{array}\right.
$$
Let $Y_0$ be the Zariski closure of $\widehat{i}_*(X_1)$
in $X(\p M_\sj)$. We knew $Y_0$ is an irreducible curve.
We may find an irreducible curve $X_0$ in $X_1$
such that $Y_0$ is also the  Zariski closure of $\widehat{i}_*(X_0)$.
Now it also follows from the constructional definition of the $A$-polynomial that
there is a convergent sequence of regular points $\{v_i\}\subset \tilde X_0$
such that $\tilde f_r(v_i)\ra 0$ and $\tilde f_\m(v_i)
\ra (x_0+x_0^{-1})^2-4$, i.e.
if  $v$ is the limit point of $v_i$ in $\tilde X_0$,
then $\tilde f_r(v)= 0$ and $\tilde f_\m(v)
= (x_0+x_0^{-1})^2-4\ne 0$.

Note that $\tilde f_r$ is not constant on $X_0$.
For otherwise $X_0$ would be a semi-norm curve with $r$ as the
associated slope and would have
a contradiction with Theorem~\ref{properties of a semi-norm curve} (3)
(since by Lemma~\ref{meridian not bdy slope} $\m$ is not a boundary slope).

So we have $Z_v(\tilde f_r)>Z_v(\tilde f_\m)=0$.
Again  $v$ cannot be a  regular point of $\tilde X_0$
due to  Lemma~\ref{prop:AbelRep}.
So $v$ is an  ideal point  of $\tilde X_0$
such that $f_r(v)=0$ and $f_\m(v)\ne 0$ is finite.
We get a contradiction with \cite[Prposition~4.12]{BZ4} as
in the proof of Lemma~\ref{r is a vertex}.
\qed

\begin{lem}\label{nonhyp surgery}
Let $K$ be a hyperbolic knot in $S^3$. For a given slope $p/q$, if every solution $(x_0,y_0)$
of the system
of equations $$\left\{\begin{array}{ll}A_\sk(x,y)&=0,\\
x^py^q-1&=0,\end{array}\right.
$$ has  $x_0\in\{1,-1,0\}$, then
$M_\sk(p/q)$ is not a hyperbolic manifold.
\end{lem}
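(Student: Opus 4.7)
The plan is to argue by contradiction: suppose $M_\sk(p/q)$ is hyperbolic. Then its fundamental group admits a discrete faithful holonomy representation $\rho_0\co\pi_1(M_\sk(p/q))\to PSL_2(\c)$. Since $M_\sk(p/q)$ is a closed orientable 3-manifold, $\rho_0$ lifts to $\tilde\rho_0\co\pi_1(M_\sk(p/q))\to SL_2(\c)$ (by Culler's lifting theorem). Composing with the surgery quotient $\pi_1(M_\sk)\twoheadrightarrow\pi_1(M_\sk(p/q))$ gives $\rho\co\pi_1(M_\sk)\to SL_2(\c)$ with $\rho(\mu^p\lambda^q)=I$, where $\mu,\lambda$ denote the meridian and longitude of $K$.

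Since $M_\sk(p/q)$ is closed hyperbolic, $\pi_1(M_\sk(p/q))$ is torsion free and $\rho_0$ has no parabolic elements in its image; hence $\rho(\mu)$ is either $\pm I$ or loxodromic. I would rule out $\rho(\mu)\in\{\pm I\}$ as follows: the meridian $\mu$ normally generates $\pi_1(M_\sk)$ (since Dehn filling $M_\sk$ along $\mu$ yields $S^3$), so the image of $\rho$ would be contained in $\{\pm I\}$, and then $\rho_0$ would have trivial image in $PSL_2(\c)$, contradicting its faithfulness on the nontrivial group $\pi_1(M_\sk(p/q))$. So $\rho(\mu)$ is loxodromic, and the commuting element $\rho(\lambda)$ is diagonalizable in the same eigenbasis. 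After conjugation I may write
\[
\rho(\mu)=\begin{pmatrix}x_0&0\\0&x_0^{-1}\end{pmatrix},\qquad\rho(\lambda)=\begin{pmatrix}y_0&0\\0&y_0^{-1}\end{pmatrix},
\]
with $x_0\in\c^*\setminus\{1,-1\}$, and $\rho(\mu^p\lambda^q)=I$ translates to $x_0^p y_0^q=1$.

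It remains to show that $(x_0,y_0)$ is a zero of $A_\sk(x,y)$. The character $\chi_\rho$ is irreducible and lies on some irreducible component $X_0$ of $X(M_\sk)$. Since $K$ is hyperbolic, by Thurston's deformation theory for the complete hyperbolic structure on $M_\sk$, the holonomy of a hyperbolic filling lies on the geometric component containing the discrete faithful character of $\pi_1(M_\sk)$, which is a norm curve and in particular belongs to $X^*(M_\sk)$. Therefore $X_0$ contributes a balanced-irreducible factor $f_0(x,y)$ of $A_\sk(x,y)$, and $(x_0,y_0)$ is a zero of $f_0$, hence of $A_\sk$. This produces a solution $(x_0,y_0)$ of the system $A_\sk(x,y)=0$, $x^py^q-1=0$ with $x_0\notin\{0,1,-1\}$, contradicting the hypothesis.

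The main obstacle is the last step: verifying that the component $X_0$ through $\chi_\rho$ actually belongs to $X^*(M_\sk)$ rather than being an anomalous component whose image in $X(\partial M_\sk)$ is zero-dimensional (in which case $\chi_\rho$ would not correspond to a point on the A-polynomial curve). For a hyperbolic knot, this is handled by Thurston's hyperbolic Dehn surgery theorem, which places the holonomy of any hyperbolic filling on the 1-dimensional geometric component.
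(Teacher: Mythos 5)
Your overall strategy matches the paper's: assume $M_\sk(p/q)$ is hyperbolic, lift the discrete faithful $PSL_2(\c)$ holonomy to $SL_2(\c)$, note $\r_0(\m)\ne\pm I$ and non-parabolic, diagonalize to produce $(x_0,y_0)$ with $x_0\notin\{0,\pm1\}$ and $x_0^py_0^q=1$, and then argue that $(x_0,y_0)$ lies on the $A$-polynomial curve. You also correctly identify the crux: showing that the component $X_0\subset X(M_\sk)$ containing $\chi_{\r_0}$ belongs to $X^*(M_\sk)$, i.e.\ has one-dimensional image in $X(\p M_\sk)$. But your resolution of that crux is not justified. Thurston's hyperbolic Dehn surgery theorem is an asymptotic statement: it guarantees that all but finitely many fillings are hyperbolic with holonomy obtained by deforming the complete structure along the geometric component. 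It does \emph{not} say that for a \emph{given} slope $p/q$ whose filling happens to be hyperbolic, the holonomy of $M_\sk(p/q)$, viewed as a representation of $\pi_1(M_\sk)$, has character on the geometric component of $X(M_\sk)$; no such general theorem is available, and the lemma must be proved for an arbitrary fixed slope. So the step ``the holonomy of a hyperbolic filling lies on the geometric component, which is a norm curve'' is a genuine gap.

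The paper closes this gap differently, without ever locating $\chi_{\r_0}$ on the geometric component. It takes $X_0$ to be \emph{any} component of $X(M_\sk)$ containing $\chi_{\r_0}$; by Thurston's dimension bound (\cite[Proposition~3.2.1]{CS}) $X_0$ is positive dimensional. By Mostow rigidity $\chi_{\r_0}$ is an \emph{isolated} point of $X(M_\sk(p/q))$, and the paper uses this to show that $f_{\m^p\l^q}$ is non-constant on $X_0$: if it were constant it would be constantly zero, and a continuity/genericity argument on a curve $X_1\ni\chi_{\r_0}$ in $X_0$ forces $\r(\m^p\l^q)=I$ on all of $X_1$, so $X_1$ would factor through $\pi_1(M_\sk(p/q))$ and give a positive-dimensional subvariety of $X(M_\sk(p/q))$ through $\chi_{\r_0}$, contradicting isolatedness. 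Non-constancy of $f_{\m^p\l^q}$ immediately gives that $\widehat{i}_*(X_0)$ is one-dimensional, hence $X_0\in X^*(M_\sk)$ and $(x_0,y_0)$ is a zero of the factor it contributes to $A_\sk(x,y)$. If you replace your appeal to Thurston's Dehn surgery theorem with this Mostow-rigidity/isolated-character argument, your proof goes through.
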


\pf Some of the ideas for the proof come from  \cite{BZ3}.
Suppose otherwise that $M_\sk(p/q)$ is a hyperbolic $3$-manifold.
Then $\pi_1(M_\sk(p/q))$ has a discrete faithful representation $\bar \r_0$ into
$PSL_2(\c)$. By Thurston (\cite[Proposition 3.1.1]{CS}), this representation  can be lifted to
a $SL_2(\c)$-representation $\r_0$. It follows from the Mostow rigidity
that the character $\chi_{\r_0}$ of $\r_0$ is an isolated point
in $X(M_\sk(p/q))$.  Note that $\r_0$ can be considered as an element
 in $R(M_\sk)$ and $\chi_{\r_0}$ can be considered as an element in
  $X(M_\sk)$ since $R(M_\sk(p/q))$ embeds in $R(M_\sk)$ and  $X(M_\sk(p/q))$ embeds in $X(M_\sk)$.
 Of course
we have $\r_0(\m^{p}\l^q)=I$ but $\r_0(\m)\ne \pm I$.
Let $X_0$ be a component of $X(M_\sk)$ which contains $\chi_{\r_0}$.
By Thurston (\cite[Proposition~3.2.1]{CS}), $X_0$ is positive dimensional.

\begin{claim}\label{claim:non-constant}
 The function $f_{\m^p\l^q}$ is
non-constant on  $X_0$.\end{claim}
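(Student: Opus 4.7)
The plan is to argue by contradiction. Suppose $f_{\m^p\l^q}$ is constant on $X_0$. Because $\r_0(\m^p\l^q)=I$, we have $\chi_{\r_0}(\m^p\l^q)=2$ and hence $f_{\m^p\l^q}(\chi_{\r_0}) = 2^2-4=0$, so $f_{\m^p\l^q}\equiv 0$ on $X_0$. The goal is to deduce from this that $x^py^q-1$ is a factor of $A_\sk(x,y)$, which will then contradict the hypothesis on the zero set of the system.

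First I would verify that $X_0\in X^*(M_\sk)$. Since $\chi_{\r_0}$ is the character of an irreducible representation (a lift of the discrete faithful representation of the closed hyperbolic manifold $M_\sk(p/q)$), $X_0$ cannot be the trivial component. Thurston's hyperbolic Dehn surgery theorem furnishes a one-parameter family of deformations of $\r_0$ in $X(M_\sk)$ along which the trace of the meridian $\m$ varies; in particular the restriction map $\widehat{i}_*\co X_0\ra X(\p M_\sk)$ has one-dimensional image, so $X_0\in X^*(M_\sk)$ and $X_0$ contributes a balanced-irreducible factor $f_0(x,y)$ to $A_\sk(x,y)$.

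Next I would translate $f_{\m^p\l^q}\equiv 0$ into eigenvalue coordinates on $\Lambda$. Writing $D_0\subset\c^2$ for the Zariski closure of $E((t|_\Lambda)^{-1}(\widehat{i}_*(X_0)))$, the condition $\chi(\m^p\l^q)^2=4$ becomes $x^py^q=\pm 1$, so $D_0\subseteq\{x^{2p}y^{2q}=1\}$. The point of $D_0$ corresponding to $\chi_{\r_0}$ satisfies $x_0^py_0^q=1$ since $\r_0(\m^p\l^q)=I$, and since the components $\{x^py^q=1\}$ and $\{x^py^q=-1\}$ are disjoint, $D_0\subseteq\{x^py^q=1\}$. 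Because $\gcd(p,q)=1$, the curve $\{x^py^q=1\}$ is an irreducible one-dimensional subvariety of $\c^*\times\c^*$; as $D_0$ is itself one-dimensional, $D_0=\{x^py^q=1\}$ and $f_0(x,y)$ is $x^py^q-1$ up to a unit monomial.

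Since $f_0(x,y)$ divides $A_\sk(x,y)$ and vanishes on the entire curve $\{x^py^q=1\}$, the system $A_\sk(x,y)=0,\ x^py^q-1=0$ admits a solution $(x_0,y_0)$ for every $x_0\in\c^*$, in particular some $x_0\notin\{0,1,-1\}$, contradicting the hypothesis. The main obstacle is the first step: ruling out the possibility that $X_0$ is of type (c) (constant peripheral character) genuinely requires Thurston's deformation theorem, which guarantees that the peripheral trace is non-constant on the component of $X(M_\sk)$ containing a lift of the discrete faithful representation of $M_\sk(p/q)$.
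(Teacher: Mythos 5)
There is a genuine gap at your first step, and it sits exactly where the real content of the claim lies. You reduce everything to showing that $X_0$ has one-dimensional image in $X(\partial M_K)$ (equivalently, that $X_0$ is not of type (c)), and you justify this by appealing to Thurston's hyperbolic Dehn surgery theorem. But that theorem concerns deformations of the \emph{complete} structure on the cusped manifold $M_K$: it says the peripheral trace functions are non-constant on the component of $X(M_K)$ containing the discrete faithful character of $M_K$ itself. The component $X_0$ here contains instead the character $\chi_{\rho_0}$ of the holonomy of the \emph{closed} manifold $M_K(p/q)$, pulled back to $\pi_1(M_K)$, and for an arbitrary hyperbolic filling slope there is no a priori reason that this character lies on the canonical component, nor an off-the-shelf theorem asserting that the peripheral traces vary on whichever component it does lie on. Worse, under the assumption $f_{\mu^p\lambda^q}\equiv 0$, the statement ``the peripheral image of $X_0$ is one-dimensional'' is not weaker than the claim you are trying to prove, so the reduction is essentially circular unless you supply an independent argument for it.

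The argument that actually closes this gap is the one the paper gives, and it rests on Mostow rigidity of the closed manifold rather than on Thurston's surgery theorem: $\chi_{\rho_0}$ is an isolated point of $X(M_K(p/q))$, while $X_0$ is positive dimensional by Thurston's deformation estimate for the cusped manifold; if $f_{\mu^p\lambda^q}\equiv 0$ on $X_0$, one takes an irreducible curve $X_1\subseteq X_0$ through $\chi_{\rho_0}$ and upgrades, by a genericity-and-continuity argument (using that $\rho_0(\mu)$ is loxodromic and that $\rho(\mu)$ commutes with $\rho(\mu^p\lambda^q)$), the trace condition $\operatorname{tr}\rho(\mu^p\lambda^q)=\pm2$ to the identity $\rho(\mu^p\lambda^q)=I$ on all of $X_1$, so that $X_1$ factors through $X(M_K(p/q))$ and contradicts isolatedness. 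Note that this already proves the claim outright, which makes your second and third steps redundant; those steps are nonetheless correct as an alternative endgame (producing $x^py^q-1$ as a factor of $A_K(x,y)$ and contradicting the hypothesis of the lemma instead of contradicting isolatedness), but only once the one-dimensionality you have not established is in hand.
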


Suppose otherwise. Then $f_{\m^p\l^q}$ is constantly zero on $X_0$ since
$f_{\m^p\l^q}(\chi_{\r_0})=0$.
So for every $\chi_\r\in X_0$, $\r(\m^p\l^q)$ is either $I$ or $-I$
or is a parabolic element.
Let $X_1$ be an irreducible curve in $X_0$ which contains the point $\chi_{\r_0}$.
For a generic point $\chi_\r\in X_1$, $\r(\m^p\l^q)$ cannot be a parabolic element
since otherwise $\r(\m)$ is either $I$ or $-I$ or parabolic for all
$\chi_\r\in X_1$ and this happens in particular at the point $\chi_{\r_0}$, yielding
a contradiction.
For a generic point $\chi_\r\in X_1$, $\r(\m^p\l^q)$ cannot be $-I$ either
for otherwise by continuity, $\r(\m^p\l^q)=-I$
for every point $\chi_\r\in X_1$, and this happens in particular
at the point $\chi_{\r_0}$, yielding
another contradiction.
So for a generic point $\chi_\r\in X_1$, $\r(\m^p\l^q)=I$
and again by continuity, $\r(\m^p\l^q)=I$ for every
point $\chi_\r\in X_1$.
So $X_1$ factors though the $p/q$-surgery on $K$ and becomes a
subvariety of $X(M_\sk(p/q))$. But this contradicts the
fact that $\chi_{\r_0}$ is an isolated point
of $X(M_\sk(p/q))$. The claim is thus proved.

It also follows from the proof of Claim~\ref{claim:non-constant} that $X_0$ is one dimensional. For otherwise there would be a curve $X_1$ in $X_0$ such that
$\chi_{\r_0}\in X_1$ and $f_{\m^p\l^q}$ is constantly zero on $X_1$, which is impossible by the proof of Claim~\ref{claim:non-constant}.

It follows from Claim~\ref{claim:non-constant} that the restriction of $X_0$
in $X(\p M_\sk)$ is one dimensional and thus $X_0\in X^*(M_\sk)$ and contributes
a factor $f_0(x,y)$ to $A_\sk(x,y)$.

We may assume, up to conjugation of $\r_0$, that
$$\r_0(\m)=\left(\begin{array}{cc}x_0&a\\0&x_0^{-1}\end{array}\right),\;\;\;
\r_0(\l)=\left(\begin{array}{cc}y_0&b\\0&y_0^{-1}\end{array}\right).$$
Note that $x_0\ne \pm 1$ (as $\r_0(\m)\ne \pm I$ and cannot be a parabolic element of $SL_2(\c)$).
By the construction of $A_\sk(x,y)$, $(x_0, y_0)$ is a solution
 of  the system
 $$\left\{\begin{array}{ll}f_0(x,y)&=0,\\
x^py^q-1&=0.\end{array}\right.$$
and thus is a solution of
  $$\left\{\begin{array}{ll}A_\sk(x,y)&=0,\\
x^py^q-1&=0.\end{array}\right.$$ We get a contradiction with the assumption of the lemma.
\qed

\begin{prop}\label{same r slope}
If $K\subset S^3$ is a hyperbolic knot whose $A$-polynomial
divides  the $A$-polynomial of $J=k(l,m,0,0)$, then $K$ has the same half-integral toroidal slope as $J$
and thus $K$ is one of $k(l,m,n,p)$.
\end{prop}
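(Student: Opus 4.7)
The plan is to argue that $M_\sk(r)$ is not hyperbolic for $r=d/2$ the half-integral toroidal slope of $J$, then promote non-hyperbolicity to toroidality, and finally invoke Theorem~\ref{GL2}.

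For the first step, I would exploit the divisibility $A_\sk(x,y)\mid A_\sj(x,y)$ directly. Any common zero $(x_0,y_0)$ of $A_\sk(x,y)=0$ and $x^dy^2-1=0$ is in particular a common zero of $A_\sj(x,y)=0$ and $x^dy^2-1=0$, so Lemma~\ref{J has only 3 roots} forces $x_0\in\{0,\pm 1\}$. Lemma~\ref{nonhyp surgery} applied to $K$ at slope $p/q=d/2$ then yields immediately that $M_\sk(r)$ is not a hyperbolic manifold.

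To promote this to toroidality I would use that $K$ is hyperbolic and that $\D(r,\m)=2$ because $r$ is half-integral. The Cyclic Surgery Theorem of Culler-Gordon-Luecke-Shalen rules out $\pi_1(M_\sk(r))$ being cyclic, hence no lens-space filling; known results on non-integral reducible surgeries on hyperbolic knots in $S^3$ (Gordon-Luecke and subsequent refinements) rule out reducible fillings; and the Finite Surgery Theorem, together with $\D(r,\m)=2$ and the non-cyclicity already established, rules out the remaining spherical and small Seifert fibered fillings. Thurston-Perelman geometrization then leaves $M_\sk(r)$ toroidal.

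With $M_\sk(r)$ toroidal and $r$ non-integral, Theorem~\ref{GL2} immediately forces $K=k(l',m',n',p')$ for some admissible parameters, and the uniqueness of the half-integral toroidal slope within each such knot exterior (recorded in Section~\ref{topological properties of J}) identifies $r$ as the half-integral toroidal slope of $K$, matching that of $J$. The main obstacle will be the middle step, where one must carefully invoke the appropriate exceptional surgery theorems to eliminate the various non-toroidal non-hyperbolic geometric structures for $M_\sk(r)$; the opening step, by contrast, is a one-line transfer through the divisibility and the two preceding lemmas.
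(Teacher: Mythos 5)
Your opening and closing steps coincide with the paper's: transferring the root condition through the divisibility and Lemma~\ref{J has only 3 roots}, then applying Lemma~\ref{nonhyp surgery}, is exactly how the paper shows $M_\sk(r)$ is not hyperbolic, and the final appeal to Theorem~\ref{GL2} plus uniqueness of the half-integral toroidal slope is the same. The gap is in your middle step. The Cyclic Surgery Theorem, the Gordon--Luecke reducibility theorem and the Finite Surgery Theorem together still leave open the possibility that $M_\sk(r)$ is an atoroidal Seifert fibred space over $S^2(a,b,c)$ with \emph{infinite} (hence non-cyclic and non-finite) fundamental group. Such a manifold is irreducible, non-Haken when its first homology is finite, and not hyperbolic, so nothing on your list excludes it, and geometrization does not then force toroidality.

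The paper closes this case by a step you omit entirely: it first proves that $r$ is a \emph{boundary slope} of $K$. Since $K$ is hyperbolic, a norm curve of $X(M_\sk)$ contributes a balanced-irreducible factor $f_0(x,y)$ to $A_\sk(x,y)$; by divisibility $f_0$ is also a factor of $A_\sj(x,y)$, hence comes from a norm curve of $X(M_\sj)$, and Lemma~\ref{r is a vertex} places $r$ at a vertex of that norm polygon. The duality of Theorem~\ref{dual polygons} transfers this to an edge of the Newton polygon of $f_0$, so $r$ is a boundary slope of $K$. With $r$ a boundary slope, \cite[Theorem~2.0.3]{CGLS} together with \cite{GL} forces $M_\sk(r)$ to be Haken, and Haken plus finite first homology plus non-hyperbolic yields toroidal. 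Alternatively, you could have salvaged your own route without the boundary-slope step by invoking the paper's Lemma~\ref{non-small Seifert surgery}, whose hypotheses hold here ($d$ is odd and the root condition is exactly what your first step establishes), to kill the remaining small Seifert fibred case; but the Finite Surgery Theorem alone does not do this, so as written the argument does not close.
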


\pf Since $K$ is hyperbolic, $X(M_\sk)$ contains a norm curve
component $X_0'$ which contributes a balanced-irreducible factor
$f_0(x,y)$ to $A_\sk(x,y)$ such that the Newton polygon of $f_0(x,y)$
is dual to the norm polygon determined by $X_0'$ by Theorem~\ref{dual polygons}.
By the assumption that $A_\sk(x,y)$ divides $A_\sj(x,y)$,
$f_0(x,y)$ is also a factor of $A_\sj(x,y)$.
Thus there is a curve $X_0$ in a component of $X^*(M_\sj)$
which contributes $f_0(x,y)$ and $X_0$ must be a norm curve
whose norm polygon $B_0$ is dual to the Newton polygon of $f_0(x,y)$.
By Lemma~\ref{r is a vertex}, the half-integral toroidal slope
$r=d/2$ of $J$ is associated to a vertex of $B_0$ and thus
$r=d/2$ is also associated to an edge of the Newton polygon of
$f_0(x,y)$.
Hence $r$ is also a boundary slope of $K$.

Again by the  assumption that $A_\sk(x,y)$ divides $A_\sj(x,y)$,
together with Lemma~\ref{J has only 3 roots}, we see that
if $(x_0,y_0)$ is a solution
of the system
$$\left\{\begin{array}{ll}A_\sk(x,y)&=0,\\
x^dy^2-1&=0,\end{array}\right.
$$
then $x_0\in \{0, 1,-1\}$.
Now applying  Lemma~\ref{nonhyp surgery},
we see that $M_\sk(r)$ is not a hyperbolic manifold.
Applying \cite[Theorem 2.0.3]{CGLS} and \cite{GL}
we see that $M_\sk(r)$ must be a Haken manifold and
thus must be  a toroidal manifold (as it has finite first homology).
Finally    $K$ is one of the  knots $k(l,m,n,p)$
by Theorem~\ref{GL2}.
\qed

\begin{lem}\label{lmn-irr-rep}
The half-integral toroidal $r$-surgery on $J=k(l,m,n,0)$, $n\ne 0,1$,
is a manifold with an irreducible $SL_2(\c)$ representation $\r_0$
whose image contains no parabolic elements.
\end{lem}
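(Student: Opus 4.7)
The approach is to exploit the graph-manifold description $M_\sj(r)=X_1\cup_T X_2$ recorded in Section~\ref{topological properties of J}: with $p=0$, the piece $X_1=E(T(a,b))$ is a torus-knot exterior ($a=|l|$, $b=|lm-1|$), so $\pi_1(X_1)=\langle u,v\mid u^a=v^b\rangle$ with central fiber $\sigma_1=u^a=v^b$ and meridian $\mu_1=u^sv^t$ for appropriate $s,t$; with $n\ne 0,1$, the piece $X_2$ is a Seifert fibered space over a disk with two cone points (of orders $2$ and $\alpha=|4nm-2m+1|$, both odd-coprime), the Dehn filling $X_2(\mu_2)$ is a lens space of order $|2n-1|\ge 3$, and the gluing identifies $\mu_1\leftrightarrow\sigma_2$, $\sigma_1\leftrightarrow\mu_2$. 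I aim to build $\r_0$ with image inside the compact subgroup $SU(2)\subset SL_2(\c)$, so that the ``no parabolic elements'' conclusion is automatic, since every non-identity element of $SU(2)$ is elliptic.

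First construct an irreducible $\r_1:\pi_1(X_1)\to SU(2)$ with $\r_1(\sigma_1)=I$. Such a representation is obtained by sending $u,v$ to $SU(2)$ elements of orders $a,b$ with eigenvalues $e^{\pm 2\pi ik/a}$, $e^{\pm 2\pi i\ell/b}$ (so that $u^a,v^b\mapsto I$), parametrized by the relative axis of the two rotations in $S^2$. By the classical formula $\mathrm{tr}(\r_1(\mu_1))=2\cos(2\pi sk/a)\cos(2\pi t\ell/b)-2\sin(2\pi sk/a)\sin(2\pi t\ell/b)(\vec n_u\cdot\vec n_v)$, the trace varies continuously over the closed interval whose endpoints are the commuting limits $2\cos(2\pi sk/a\pm 2\pi t\ell/b)$. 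Since $|2n-1|\ge 3$, the set $\{2\cos(2\pi j/|2n-1|):1\le j\le(|2n-1|-1)/2\}$ is a nonempty subset of $(-2,2)$. By choosing $(k,\ell)$ suitably and applying the intermediate value theorem along the arc, arrange that $\mathrm{tr}(\r_1(\mu_1))=2\cos(2\pi j/|2n-1|)$ for some such $j$; then $\r_1(\mu_1)$ is an element of $SU(2)$ of finite order dividing $|2n-1|$ and distinct from $\pm I$.

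Then extend $\r_1$ to $\r_0$ by specifying its restriction to $\pi_1(X_2)$. The gluing forces $\r_0(\mu_2)=\r_1(\sigma_1)=I$, so this restriction factors through $\pi_1(X_2(\mu_2))\cong \z/|2n-1|$; using the Seifert invariants of $X_2$ and the coprimality of the cone-point orders, verify that $\sigma_2$ maps to a generator of this cyclic quotient, so that sending that generator to $\r_1(\mu_1)$ gives the unique consistent extension with $\r_0(\sigma_2)=\r_1(\mu_1)$. The assembled $\r_0:\pi_1(M_\sj(r))\to SU(2)$ matches on $\pi_1(T)$ by construction, is irreducible because $\r_0|_{\pi_1(X_1)}=\r_1$ is, and has image in $SU(2)$, hence no element of the image is parabolic. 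The main obstacle is the trace-matching step in the previous paragraph: for every hyperbolic pair $(l,m)$ and every integer $n\ne 0,1$, one must exhibit $(k,\ell)$ for which the interval $[2\cos(2\pi sk/a+2\pi t\ell/b),2\cos(2\pi sk/a-2\pi t\ell/b)]$ hits one of the cyclotomic targets; this is a concrete Diophantine problem that should be tractable by choosing $k$ and $\ell$ so that $2\pi sk/a$ and $2\pi t\ell/b$ approximate appropriate angles, but requires case analysis for small $a,b$.
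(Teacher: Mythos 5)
Your overall strategy coincides with the paper's: both use the decomposition $M_J(r)=X_1\cup_T X_2$, kill $\pi_1(X_2)$ down to the cyclic group $\mathbb{Z}/|2n-1|$ via the lens space filling $X_2(\mu_2)$ (your observation that $\sigma_2$ must map to a generator is exactly the paper's argument with the group $\langle x,y\mid x^2=y^c=1,\ xy=1\rangle$, trivial since $c$ is odd), and reduce everything to producing an irreducible representation of the torus-knot piece $X_1=E(T(a,b))$ that kills the Seifert fiber $\sigma_1$ and sends $\mu_1$ to a non-central element of odd order dividing $|2n-1|$. The difference lies in how that representation is produced, and there your argument has a genuine gap.

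First, a structural problem: you insist on an irreducible $SU(2)\subset SL_2(\mathbb{C})$ representation $\rho_1$ with $\rho_1(\sigma_1)=I$. Here $\{a,b\}=\{|l|,|lm-1|\}$, and one of these can equal $2$ for hyperbolic $k(l,m,n,0)$ — for instance $k(2,2,n,0)$ and $k(3,1,n,0)$ give $\{a,b\}=\{2,3\}$, the trefoil exterior. In that case the relation $\rho_1(u)^2=\rho_1(\sigma_1)=I$ forces $\rho_1(u)=\pm I$ (a determinant-one matrix squaring to $I$ is $\pm I$), so $\rho_1$ is abelian and your construction cannot even begin. Second, even when $a,b\ge3$, the trace-matching step — that some $(k,\ell)$ makes the interval of achievable traces contain $2\cos(2\pi j/|2n-1|)$ in its \emph{interior} (interior is needed for irreducibility, since the endpoints correspond to parallel rotation axes) — is the real content of the lemma, and you have explicitly deferred it as "a concrete Diophantine problem"; when $|2n-1|=3$ the target set is the single value $-1$ and the verification is not automatic. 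The paper avoids both issues at once by working in $PSL_2(\mathbb{C})$, where an order-two element is not central: the representation of $\pi_1(X_1)$ is taken to factor through the triangle group $\langle x,y\mid x^a=y^b=(xy)^q=1\rangle$ with $q>1$ odd dividing $2n-1$, whose discrete faithful representation into $SO(3)$ or $PSL_2(\mathbb{R})$ is irreducible, sends $xy$ (the image of $\mu_1$) to an element of order exactly $q$, and has no parabolics in its image; the resulting $PSL_2(\mathbb{C})$ representation of all of $\pi_1(M_J(r))$ then lifts to $SL_2(\mathbb{C})$ because the half-integral surgery has odd first homology. To salvage your $SU(2)$ picture you would have to allow $\rho_1(\sigma_1)=-I$ in the even cases and correspondingly twist the extension over $X_2$ (which no longer factors through the lens space group), which in effect reproduces the paper's $PSL_2$ lift.
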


\pf We knew from Section~\ref{topological properties of J} that
$M_\sj(r)=X_1\cup X_2$, where  $X_1$ is a $(a,b)$-torus knot
exterior and $X_2$ is Seifert fibred with base orbifold $D^2(2,c)$
for some odd integer $c>1$, such that
 the meridian slope $\m_1$ of $X_1$ is identified with
the Seifert fibre slope $\s_2$ of $X_2$ and the Seifert fiber slope $\s_1$  of $X_1$ is identified with
 a lens space filling slope $\m_2$  of $X_2$ (and the lens space has order $|2n-1|$).

Perhaps it is easier to construct a $PSL_2(\c)$ representation $\bar\r_0$ of $\pi_1(M_\sj(r))$ with the required  properties.
As  $M_\sj(r)$ has zero $\z_2$-homology,
every $PSL_2(\c)$ representation of $\pi_1(M_\sj(r))$ lifts to  a $SL_2(\c)$ representation.

The representation $\bar\r_0$ will send
$\pi_1(X_2)$ to a cyclic group of order $|2n-1|$
which is possible by factoring through $X_2(\m_2)$.
So $\bar\r_0(\m_2)=id$.
We claim $\bar\r_0(\s_2)$ is not the identity element.
For otherwise $\bar\r_0$ factors through the group
$$<x,y; x^2=y^c=1, xy=1>$$
which is the trivial group as $c$ is odd.

Hence the order of $\bar\r_0(\s_2)$ is an odd
number $q>1$ which is a factor of $2n-1$.

On the $X_1$ side, we need to have $\bar\r_0(\s_1)=id$ and
$\bar\r_0(\m_1)=\bar\r_0(\s_2)$  of order $q$.
So $\bar\r_0$ factors through the triangle group
$$<x,y; x^a=y^b=(xy)^q=1>.$$
Such representation exists and can be required to be irreducible.
Also as at least  one of $a$ and $b$ is odd  and $q$ is odd, we may require
the image of $\bar\r_0$ containing no parabolic elements.
In fact the triangle group is either a spherical or a hyperbolic triangle group
and so we may simply choose
 $\bar\r_0$ to be a discrete faithful representation of the triangle group into $SO(3)\subset  PSL_2(\c)$ (when the triangle group is spherical)
or into $PSL_2(\mathbb R)\subset  PSL_2(\c)$ (when the triangle group is hyperbolic), and thus the image
of $\bar\r_0$ has no parabolic elements.
\qed

\begin{lem}\label{lmp-irr-rep}For any given $J=k(l,m,0,p)$,
with $p\ne 0,1$, and with $l$ not divisible by $|2p-1|$,
the half-integral toroidal surgery on $J$
is a manifold with an irreducible $SL_2(\c)$ representation $\r_0$
whose image contains no parabolic elements.
\end{lem}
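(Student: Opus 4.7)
The argument mirrors Lemma~\ref{lmn-irr-rep}, with the roles of $X_1$ and $X_2$ interchanged. Recall from Section~\ref{topological properties of J} that $M_\sj(r)=X_1\cup_T X_2$; when $n=0$, $X_2$ is the exterior of the $(2,|2m-1|)$-torus knot (the $\m_2$-filling being a lens space of order $|2n-1|=1$, i.e., $S^3$), while $X_1$ is Seifert fibered over $D^2(|l|,|b|)$ with $b=-lm(2p-1)+pl+2p-1$, and $\m_1$-filling giving a lens space of order $|2p-1|$; the gluing identifies $\m_1=\s_2$ and $\s_1=\m_2$. The plan is first to build a $PSL_2(\c)$-representation $\bar\r_0$ of $\pi_1(M_\sj(r))$ with the stated properties, and then to lift to $SL_2(\c)$; the lift exists because $H_1(M_\sj(r);\z)$ is cyclic of odd order, so the obstruction class in $H^2(M_\sj(r);\2)$ vanishes.

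On the $X_1$ side, define $\bar\r_0|_{\pi_1(X_1)}$ to factor through $\pi_1(X_1(\m_1))\cong\z/(2p-1)$ via a faithful character $\chi$. Using the Seifert presentation of $\pi_1(X_1)$ and killing $\m_1$, a short determinant computation shows that $\gcd(|l|,|b|)$ divides $|2p-1|$ and that the image of $\s_1$ in $\z/(2p-1)$ has order exactly $(2p-1)/\gcd(|l|,|b|)$. Since $b\equiv pl\pmod{2p-1}$, the hypothesis $|2p-1|\nmid l$ prevents $\gcd(|l|,|b|)$ from equaling $|2p-1|$, so $q:=(2p-1)/\gcd(|l|,|b|)$ is an odd integer $\geq 3$ and $\bar\r_0(\s_1)$ has order exactly $q$. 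On the $X_2$ side, impose $\bar\r_0(\s_2)=I$, so $\bar\r_0|_{\pi_1(X_2)}$ factors through the orbifold group $\G_2=\langle x,y\mid x^2=y^{|2m-1|}=1\rangle$ in which $\m_2$ is conjugate to $xy$; the compatibility $\bar\r_0(\m_2)=\bar\r_0(\s_1)$ then forces $(xy)^q=1$, so $\bar\r_0|_{\pi_1(X_2)}$ factors further through the triangle group $T=\langle x,y\mid x^2=y^{|2m-1|}=(xy)^q=1\rangle$.

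Since $|2m-1|\geq 3$ and $q\geq 3$ are both odd, $T$ is never a Euclidean triangle group and is therefore spherical or hyperbolic. Choose $\bar\r_0|_T$ to be a discrete faithful representation into $SO(3)\subset PSL_2(\c)$ in the spherical case, or into a cocompact Fuchsian subgroup $PSL_2(\mathbb R)\subset PSL_2(\c)$ in the hyperbolic case; such a representation is irreducible (its image is non-abelian) and contains no parabolic elements. By varying the faithful character $\chi$ on the $X_1$ side among primitive $(2p-1)$-th roots of unity and conjugating the triangle group representation on the $X_2$ side inside $PSL_2(\c)$, we can arrange $\bar\r_0(xy)=\bar\r_0(\s_1)$ as elements of $PSL_2(\c)$, so the two partial representations agree on the gluing torus and assemble into a well-defined $\bar\r_0$ on $\pi_1(M_\sj(r))$, which lifts as above. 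The main obstacle is not the order computation but this final matching step: one must verify that there are enough degrees of freedom in $\chi$ and in the discrete faithful representation of $T$ to make $\bar\r_0(\s_1)$ equal, and not merely conjugate, to $\bar\r_0(xy)$ in $PSL_2(\c)$.
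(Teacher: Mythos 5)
Your proof is correct and follows essentially the same route as the paper's: factor $\pi_1(X_1)$ through the order-$|2p-1|$ lens space filling so that $\bar\r_0(\m_1)=id$, use $(2p-1)\nmid l$ to force $\bar\r_0(\s_1)$ to have odd order $q>1$ (your sharper identification of $q$ as $(2p-1)/\gcd(|l|,|b|)$ is a harmless refinement of the paper's "some odd $q>1$ dividing $2p-1$"), realize the $X_2$ side through the $(2,|2m-1|,q)$ triangle group, and lift to $SL_2(\c)$ using odd first homology. The matching step you flag does go through — since the lens space filling forces $\D(\m_1,\s_1)=1$, the classes $\m_1,\s_1$ form a basis of $\pi_1(T)$, so it suffices to equate $\bar\r_0(\s_1)$ with $\bar\r_0(xy)$; both are semisimple of order exactly $q$, the surjectivity of $(\z/(2p-1))^{*}\to(\z/q)^{*}$ lets you tune $\chi(\s_1)$ to any primitive $q$-th root of unity so as to match rotation angles, and a final conjugation makes the two elements equal — a point the paper's own proof leaves entirely implicit.
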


\pf The proof is similar to
that of Lemma~\ref{lmn-irr-rep}.

We knew $M_\sj(r)=X_1\cup X_2$, where $X_2$ is a $(2,a)$-torus knot
exterior and $X_1$ is Seifert fibred with base orbifold $D^2(|l|,|-lm(2p-1)+pl+2p-1|)$,
such that  the meridian slope $\m_2$  of $X_2$ is identified with
the Seifert fibre slope $\s_1$ of $X_1$ and the Seifert fiber slope $\s_2$ of $X_2$ is identified with
 a lens space filling slope $\m_1$ of $X_1$ (and the lens space has order $|2p-1|$).

As in Lemma~\ref{lmn-irr-rep}, we jus need to  construct a $PSL_2(\c)$ representation $\bar\r_0$ of $\pi_1(M_\sj(r))$ which is irreducible and whose image contains no parabolic elements.

The representation $\bar\r_0$ will send
$\pi_1(X_1)$ to a cyclic group of order $|2p-1|$
which is possible by factoring through $X_1(\m_1)$.
So $\bar\r_0(\m_1)=id$.
We claim $\bar\r_0(\s_1)$ is not the identity element.
For otherwise $\bar\r_0$ factors through the group
$$<x,y; x^l=y^{-lm(2p-1)+pl+2p-1}=1, xy=1>$$
which is a cyclic group of order less than $|2p-1|$
since    $l$ is not divisible
by  $2p-1$ by our assumption.

Hence the order of $\bar\r_0(\s_2)$ is an odd
number $q>1$ which is a factor of $2p-1$.

On the $X_2$ side, we  need to have $\bar\r_0(\s_2)=id$ and
$\bar\r_0(\m_2)=\bar\r_0(\s_1)$  of order $q$.
So $\bar\r_0$ factors through the triangle group
$$<x,y; x^2=y^a=(xy)^q=1>.$$
Such representation exists and can be required to be irreducible.
Also as both $a$ and $q$ are odd, we may require
the image of $\bar\r_0$ containing no parabolic elements.
\qed

\begin{lem}\label{(l,m,n) is different from (l',m')}
The $A$-polynomial of any $J=k(l, m,n,0)$, $n\ne 0,1$,
does not divide the $A$-polynomial of any  $J'=k(l',m',0,0)$.
\end{lem}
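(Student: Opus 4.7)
The plan is to suppose for contradiction that $A_\sj(x,y)$ divides $A_{J'}(x,y)$, writing $J=k(l,m,n,0)$ with $n\ne 0,1$, $J'=k(l',m',0,0)$, and $M_\sj$ for the exterior of $J$. Since $J$ is hyperbolic, Proposition~\ref{same r slope} (applied with $K=J$) forces $J$ and $J'$ to share their half-integral toroidal slope; call this common slope $r=d/2$. Lemma~\ref{lmn-irr-rep} then produces an irreducible $SL_2(\c)$ representation $\r_0$ of $\pi_1(M_\sj(r))$ whose image contains no parabolic element. Pulling back along the surgery quotient $\pi_1(M_\sj)\ra\pi_1(M_\sj(r))$ gives an irreducible representation $\tilde\r_0\co\pi_1(M_\sj)\ra SL_2(\c)$ with $\tilde\r_0(\m^d\l^2)=I$.

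Because $\m$ normally generates $\pi_1(M_\sj)$ and $\tilde\r_0$ is irreducible, $\tilde\r_0(\m)\ne\pm I$; combined with the absence of parabolic elements in the image, $\tilde\r_0(\m)$ and $\tilde\r_0(\l)$ are simultaneously diagonalizable, with eigenvalues $x_0^{\pm1}$ and $y_0^{\pm1}$ satisfying $x_0\notin\{0,1,-1\}$ and $x_0^dy_0^2=1$. The endgame is to show that $(x_0,y_0)$ is a zero of $A_\sj(x,y)$; granted this, the assumed divisibility places $(x_0,y_0)$ in the zero set of $A_{J'}(x,y)$, and Lemma~\ref{J has only 3 roots} applied to $J'$ (with its half-integral toroidal slope $r=d/2$) forces $x_0\in\{0,1,-1\}$, contradicting the previous line.

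To verify that $(x_0,y_0)$ is a zero of $A_\sj(x,y)$, let $X_0\subseteq X(M_\sj)$ be the irreducible component containing $\chi_{\tilde\r_0}$. Since $\tilde\r_0$ is irreducible with non-central peripheral restriction, Thurston's theorem (\cite[Proposition~3.2.1]{CS}) yields $\dim X_0\ge 1$, so it suffices to show $X_0\in X^*(M_\sj)$. In the trichotomy preceding Theorem~\ref{properties of a norm curve}, type (b) with $\m$ as the distinguished constant slope is excluded by Lemma~\ref{meridian not bdy slope}. To rule out type (c), observe that if all peripheral trace functions on $X_0$ were constant, the simultaneous diagonalization propagates to force $\r(\m^d\l^2)=I$ for every $\chi_\r\in X_0$, hence $X_0\subseteq X(M_\sj(r))$; one then exploits the Seifert-fibered decomposition $M_\sj(r)=X_1\cup_T X_2$ from Section~\ref{topological properties of J}, the rigidity of $\r_0|_{X_1}$ as a discrete-faithful triangle-group representation, and the centrality of the Seifert fibers $\s_i\in\pi_1(X_i)$, to show that any consistent one-parameter deformation of $\r_0$ inside $X(M_\sj(r))$ must change the trace of the peripheral meridian $\m$, contradicting type (c). This last step is the main obstacle, requiring careful tracking of how $\p M_\sj$ embeds in the Seifert-fibered piece containing it and how deformations on $X_1$ and $X_2$ are coupled through the identification on $T$.
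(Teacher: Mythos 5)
Your skeleton is the same as the paper's: assume divisibility, use Proposition~\ref{same r slope} to equate the half-integral toroidal slopes, feed the irreducible representation of Lemma~\ref{lmn-irr-rep} back into $X(M_\sj)$ as a character with $\r_0(\m^d\l^2)=I$ and $\r_0(\m)$ non-central and non-parabolic, and play the resulting root of the system $A_\sj(x,y)=0$, $x^dy^2=1$ off against Lemma~\ref{J has only 3 roots} for $J'$. The gap is exactly where you flag it: proving that the component through $\chi_{\r_0}$ has one-dimensional peripheral image, i.e.\ ruling out type (c). Your proposed route --- classify all deformations of $\r_0$ inside $X(M_\sj(r))$ via rigidity of the two Seifert pieces plus bending along the splitting torus, then check that every such deformation moves $\mathrm{tr}\,\r(\m)$ --- is not carried out, and it is harder than your sketch suggests. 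First, the object you must kill is an arbitrary positive-dimensional family of characters of $\pi_1(M_\sj(r))$ through $\chi_{\r_0}$, not a priori a bending family; you would also have to exclude curves along which the restriction to one piece degenerates to reducible representations. Second, your phrase ``the Seifert-fibered piece containing [$\p M_\sj$]'' rests on a misconception: the core of the $r$-filling solid torus meets the essential torus of $M_\sj(r)$ in two points (the punctured essential torus in $M_\sj$ has two boundary components, by Gordon--Luecke), so $\m$ is not carried by a single Seifert piece, and its trace under a deformation is a genuinely coupled quantity across the graph decomposition. None of this is routine, and as written the proof does not close.

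The paper sidesteps the entire analysis with the Culler--Shalen seminorm machinery. It first passes to an irreducible \emph{curve} $X_0$ through $\chi_{\r_0}$ inside the positive-dimensional component (the trichotomy is stated for curves, not components, so this reduction matters), and proves that $f_r$ is nonconstant on $X_0$. If $f_r$ were constant and $f_\m$ nonconstant, $X_0$ would be a semi-norm curve with associated slope $r$, and Theorem~\ref{properties of a semi-norm curve}(3) together with Lemma~\ref{meridian not bdy slope} and $M_\sj(\m)=S^3$ would force $\D(\m,r)=1$, contradicting $\D(\m,r)=2$. If both $f_r$ and $f_\m$ were constant, one argues --- much as you do, but note that you need the nonzero constant value of $f_\m$ to exclude parabolic $\r(r)$ and connectedness to exclude $\r(r)=-I$ --- that $\r(r)=I$ for every $\chi_\r\in X_0$, and then the contradiction is supplied by citing \cite[Proposition~4.10]{BZ4} rather than by a hands-on study of $X(M_\sj(r))$. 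Replacing your final step by this seminorm argument is the efficient way to complete your proof.
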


\pf Suppose otherwise that $A_\sj(x,y)|A_{\sj'}(x,y)$.
 Then by Proposition~\ref{same r slope}, $J$ and $J'$ have the same half-integral toroidal slope $r=d/2$, $d$ odd.

 Let $\r_0$ be an irreducible representation of
$M_\sj(r)$ provided by Lemma~\ref{lmn-irr-rep}.
Then $\r_0(r)=I$ but $\r_0(\m)\ne \pm I$.
We know that $\chi_{\r_0}$ is contained in a positive
dimensional component $X_1$ of $X(M_\sj)$.
Let $X_0$ be an irreducible curve in $X_1$ containing
$\chi_{\r_0}$.

Claim.  $f_r$ is not constant on $X_0$.

Otherwise $f_r$ is constantly equal to $4$ on $X_0$.
If $f_\mu$ is not a constant on $X_0$, then $X_0$ would be  a semi-norm
with $r$ as its associated boundary slope, which
 is impossible by Theorem~\ref{properties of a semi-norm curve}
 as $\mu$ is not a boundary slope
and $\D(\m,r)=2$.
So $f_\mu$ is a constant not equal to $4$ on $X_0$
since $\r_0(\m)\ne \pm I$ and is not parabolic.
So for  any point $\chi_\r\in X_0$,
$\r(r)$ cannot be  a parabolic element (for otherwise
$\r(\mu)$ is also parabolic and thus $f_\m(\chi_\r)=4$).
So $\r(r)=I$ for any $\chi_\r\in X_0$.
We now get a contradiction with \cite[Proposition 4.10]{BZ4},
which proves the claim.

So $f_r$ is not constant on  $X_0$ which means that the component $X_1\supset X_0$
 belongs to $X^*(M_\sj)$ and thus contributes a factor in $A_\sj(x,y)$.
 Moreover the point  $\chi_{\r_0}$ contributes a root $(x_0,y_0)$
to the system
$$\left\{\begin{array}{l}A_\sj(x,y)=0\\
x^dy^2=1\end{array}\right.$$
 such that $x_0\ne \pm 1, 0$.
As $A_\sj(x,y)|A_{\sj'}(x,y)$,
$(x_0,y_0)$ is also a solution of the system $$\left\{\begin{array}{l}A_{\sj'}(x,y)=0\\
x^dy^2=1,\end{array}\right.$$ which
contradicts Lemma~\ref{J has only 3 roots}.
\qed

With a similar proof replacing  Lemma~\ref{lmn-irr-rep} by
 Lemma~\ref{lmp-irr-rep},  we have
\begin{lem}\label{(l,m,p) is different from (l',m')}The $A$-polynomial of any $k(l,m,0,p)$, $p\ne 0,1$, $l$ not divisible by $2p-1$, does not divide the $A$-polynomial of any
 $k(l',m',0,0)$.
\end{lem}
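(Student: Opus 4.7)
The plan is to follow the proof of Lemma~\ref{(l,m,n) is different from (l',m')} essentially verbatim, with Lemma~\ref{lmp-irr-rep} replacing Lemma~\ref{lmn-irr-rep}. Suppose for contradiction that $A_\sj(x,y) \mid A_{\sj'}(x,y)$, where $J = k(l,m,0,p)$ satisfies the stated hypotheses and $J' = k(l',m',0,0)$. Both knots are hyperbolic, so Proposition~\ref{same r slope} applies to $J$ as the ``$K$'' in that statement, yielding that $J$ and $J'$ share the same half-integral toroidal slope $r = d/2$ (with $d$ odd).

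Next, apply Lemma~\ref{lmp-irr-rep}: the hypothesis $p \neq 0,1$ and $(2p-1) \nmid l$ is exactly what is needed to produce an irreducible $SL_2(\c)$-representation $\r_0$ of $\pi_1(M_\sj(r))$ whose image contains no parabolic elements. In particular $\r_0(r) = I$ while $\r_0(\m) \neq \pm I$ and is non-parabolic. Regarding $\chi_{\r_0}$ as a character in $X(M_\sj)$ via the surjection $\pi_1(M_\sj) \to \pi_1(M_\sj(r))$, Thurston's dimension bound places $\chi_{\r_0}$ on a positive-dimensional component; let $X_0$ be an irreducible curve through $\chi_{\r_0}$.

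The crux is then to show $f_r$ is non-constant on $X_0$, by the same dichotomy used in Lemma~\ref{(l,m,n) is different from (l',m')}. If $f_r \equiv 4$ on $X_0$, then either $f_\m$ is non-constant on $X_0$ (so $X_0$ is a semi-norm curve with associated slope $r$, contradicting Theorem~\ref{properties of a semi-norm curve}(3) since $\m$ is not a boundary slope by Lemma~\ref{meridian not bdy slope} and $\D(\m,r) = 2$), or $f_\m$ is constant and $\neq 4$ on $X_0$ (forcing $\r(r) = I$ for every $\chi_\r \in X_0$, since a parabolic value of $\r(r)$ would force $\r(\m)$ parabolic and hence $f_\m = 4$); this last case contradicts \cite[Proposition~4.10]{BZ4}. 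Therefore $f_r$ is non-constant on $X_0$, so the component of $X^*(M_\sj)$ containing $X_0$ contributes a factor to $A_\sj(x,y)$.

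Finally, $\chi_{\r_0}$ gives rise to a solution $(x_0, y_0)$ of the system $\{A_\sj(x,y) = 0,\; x^d y^2 - 1 = 0\}$ with $x_0 \notin \{0, 1, -1\}$, since the diagonal entry of $\r_0(\m)$ must satisfy $x_0 + x_0^{-1} = \mathrm{trace}(\r_0(\m))$ and $\r_0(\m)$ is neither $\pm I$ nor parabolic. Because $A_\sj(x,y) \mid A_{\sj'}(x,y)$, the same $(x_0, y_0)$ solves the analogous system for $J'$, directly contradicting Lemma~\ref{J has only 3 roots} applied to $J' = k(l', m', 0, 0)$. No step looks subtle beyond bookkeeping; the only thing to watch is that the divisibility hypothesis $(2p-1) \nmid l$ is exactly what is needed to invoke Lemma~\ref{lmp-irr-rep}, and that the proof of non-constancy of $f_r$ requires $\mu$ to not be a boundary slope, which holds by Lemma~\ref{meridian not bdy slope}.
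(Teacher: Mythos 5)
Your proposal is correct and matches the paper exactly: the paper's own proof of this lemma is literally the one-line remark that the argument of Lemma~\ref{(l,m,n) is different from (l',m')} goes through with Lemma~\ref{lmp-irr-rep} in place of Lemma~\ref{lmn-irr-rep}, which is precisely the substitution you carry out, with all the intermediate steps (Proposition~\ref{same r slope}, the non-constancy of $f_r$, and the contradiction with Lemma~\ref{J has only 3 roots}) reproduced faithfully.
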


\begin{lem}\label{non-small Seifert surgery}
Let $K$ be a hyperbolic knot in $S^3$. For a given slope $p/q$, $p$ odd, if every solution $(x_0,y_0)$
of the system
of equations $$\left\{\begin{array}{ll}A_\sk(x,y)&=0,\\
x^py^q-1&=0,\end{array}\right.
$$ has  $x_0\in\{1,-1,0\}$, then
$M_\sk(p/q)$ cannot be  a Seifert fibred space whose base orbifold
is a $2$-sphere with exactly three cone points.
\end{lem}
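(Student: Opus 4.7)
My plan is to mimic the proof of Lemma~\ref{nonhyp surgery}, replacing the Mostow-rigid discrete faithful representation used there by an irreducible representation of $\pi_1(M_\sk(p/q))$ arising from the triangle group structure of a putative small Seifert fibred space. Assume for contradiction that $M_\sk(p/q)$ is Seifert fibred with base orbifold $S^2(a_1,a_2,a_3)$. Killing the central regular fibre class $h$ gives a surjection onto the triangle group $T=T(a_1,a_2,a_3)$, which is either spherical (finite), Euclidean, or a cocompact Fuchsian group.

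The first step is to construct an irreducible representation $\r_0\co\pi_1(M_\sk(p/q))\ra SL_2(\c)$ whose image contains no parabolic elements and with $\r_0(\m)\ne\pm I$. I would produce this from a suitable irreducible $PSL_2(\c)$-representation of $T$: embed a spherical $T$ into $SO(3)\subset PSL_2(\c)$; take the discrete faithful embedding of a hyperbolic $T$ into $PSL_2(\mathbb R)$, which is parabolic-free because cocompact triangle groups contain no parabolics; and factor a Euclidean $T$ through a carefully chosen finite quotient that embeds in $SO(3)$, since the geometric representation of a Euclidean triangle group into $\mathrm{Isom}(\mathbb E^2)\hookrightarrow PSL_2(\c)$ sends plane translations to parabolics. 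Since $p$ is odd, Poincar\'e duality gives $H^2(M_\sk(p/q);\mathbb Z/2)=H_1(M_\sk(p/q);\mathbb Z/2)=0$, which kills the obstruction to lifting the $PSL_2(\c)$-representation to $SL_2(\c)$. Moreover $\r_0(\m)\ne \pm I$: otherwise $\bar\r_0$ would factor through $\pi_1(M_\sk(p/q))/\langle\langle\m\rangle\rangle$; since $\m$ normally generates $\pi_1(M_\sk)$ and the surgery relation $\m^p\l^q=1$ is implied, this quotient is trivial, forcing $\bar\r_0$ to be trivial and contradicting its irreducibility.

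The remainder of the proof parallels that of Lemma~\ref{nonhyp surgery}. By Thurston's dimension inequality \cite[Proposition~3.2.1]{CS} applied to the knot exterior $M_\sk$ with its single toral boundary component, the character $\chi_{\r_0}$ belongs to a component $X_0\subset X(M_\sk)$ of positive dimension. On the other hand, because the central regular fibre class is sent to $\pm I$ by every irreducible $SL_2(\c)$-representation of $\pi_1(M_\sk(p/q))$, any such representation factors through a finite central extension of $T(a_1,a_2,a_3)$, and the finite image we constructed ensures that $\chi_{\r_0}$ is isolated in $X(M_\sk(p/q))$. Arguing exactly as in Claim~\ref{claim:non-constant}, the function $f_{\m^p\l^q}$ cannot be constant on $X_0$: otherwise a curve $X_1\subset X_0$ through $\chi_{\r_0}$ would, after ruling out the parabolic and $-I$ alternatives by continuity and the non-parabolicity of $\r_0(\m^p\l^q)=I$, factor through $X(M_\sk(p/q))$, contradicting the isolation of $\chi_{\r_0}$. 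Hence $X_0\in X^*(M_\sk)$ and contributes a factor to $A_\sk(x,y)$; writing the commuting pair $\r_0(\m),\r_0(\l)$ in upper triangular form with diagonal entries $(x_0,x_0^{-1}),(y_0,y_0^{-1})$ produces a common solution $(x_0,y_0)$ of $A_\sk(x,y)=0$ and $x^py^q=1$. Since $\r_0(\m)$ is neither $\pm I$ nor parabolic, $x_0\notin\{0,\pm 1\}$, contradicting the hypothesis.

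The main obstacle is the Euclidean base case: not only do Euclidean triangle groups embed geometrically with parabolics, but groups such as $T(2,4,4)$ and $T(2,3,6)$ admit positive dimensional families of irreducible $SL_2(\c)$-characters, threatening both the parabolic-free condition and the isolation of $\chi_{\r_0}$ in $X(M_\sk(p/q))$. Both difficulties are resolved simultaneously by routing $\r_0$ through a well-chosen finite quotient of $T$ that embeds into $SO(3)$, using $p$ odd to guarantee the lift to $SL_2(\c)$; the remaining verifications are straightforward adaptations of the steps in Lemma~\ref{nonhyp surgery}.
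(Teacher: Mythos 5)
Your proposal is correct and follows essentially the same route as the paper: construct an irreducible, parabolic-free $PSL_2(\c)$-representation of $\pi_1(M_\sk(p/q))$ through the triangle-group quotient, lift it to $SL_2(\c)$ using $p$ odd, and then run the argument of Lemma~\ref{nonhyp surgery} verbatim from the isolated character $\chi_{\r_0}$. The only divergence is your finite-quotient workaround for a Euclidean base orbifold, which the paper renders unnecessary by observing that this case cannot occur at all: $H_1(M_\sk(p/q))\cong \z/p$ is cyclic of odd order, yet it would have to surject onto the abelianization of the orbifold group of $S^2(2,3,6)$, $S^2(2,4,4)$ or $S^2(3,3,3)$ (namely $\z_6$, $\z_4\oplus\z_2$ or $\z_3\oplus\z_3$), which is impossible.
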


\pf The proof is similar to that of Lemma~\ref{nonhyp surgery}.
Suppose otherwise that $M_\sk(p/q)$ is a a Seifert fibred space whose base orbifold
is a $2$-sphere with exactly three cone points.
As $p$ is odd, the base orbifold is either spherical or
hyperbolic.
Hence $\pi_1(M_\sk(p/q))$ has  an irreducible $PSL_2(\c)$ representation
 $\bar \r_0$  which factors
 through the orbifold fundamental group of the base orbifold
 such that the image of $\bar\r_0$ contains no parabolic elements.
 As $p$ is odd, the  $PSL_2(\c)$ representation
 $\bar \r_0$ lifts to a $SL_2(\c)$ representation $\r_0$.
 It is well known that
  the character $\chi_{\r_0}$ of $\r_0$ is an isolated point
in $X(M_\sk(p/q))$.  Considered as a point in $X(M_\sk)$,
  $\chi_{\r_0}$ is contained in a positive dimensional component
   $X_0$ of  $X(M_\sk)$.
Arguing exactly as in Claim~\ref{claim:non-constant}
 we have $f_{\m^p\l^q}$ is
non-constant on  $X_0$.
Hence  $X_0$ is contained $X^*(M_\sk)$ and contributes
a factor $f_0(x,y)$ to $A_\sk(x,y)$.
Exactly as in the proof of
Lemma~\ref{nonhyp surgery},
the point $\chi_{\r_0}$ provides a solution $(x_0, y_0)$
to  the system
$$\left\{\begin{array}{ll}A_\sk(x,y)&=0,\\
x^py^q-1&=0.\end{array}\right.$$ such that $x_0\notin \{1,-1,0\}$,
giving a contradiction with the assumption of the lemma.
\qed


\section{Distinguishing $k(l_*,-1,0,0)$ from $k(l,m,0,p)$}\label{distinguish J from k(l,m,0,p)}

The goal of this section is to prove the following proposition:

\begin{prop}\label{prop:l'-100}
Suppose that two knots $k(l_*,-1,0,0)$ and $k(l,m,0,p)$ have the same  genus
$g$ and the same half-integral toroidal slope $r$, where $l_*>1$ and $(1-2p)|l$, then $k(l_*,-1,0,0)=k(l,m,0,p)$.
\end{prop}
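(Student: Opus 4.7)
The plan is to compute the half-integral toroidal slope $r_*$ and the genus $g_*$ of $J_* = k(l_*,-1,0,0)$ explicitly, then turn the hypotheses $r = r_*$ and $g = g_*$ into a Diophantine system in $(l,m,p)$ whose only solutions, subject to $(1-2p)\mid l$, are the parameter tuples that represent $J_*$ under the duplications of Proposition~\ref{prop:Dupl}.

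First, applying the slope formula~(\ref{eq:rSlope}) with $(l,m,n,p)=(l_*,-1,0,0)$ gives
\[
r_* \;=\; l_*(-3)(1+l_*)-\tfrac12 \;=\; -3\,l_*(l_*+1)-\tfrac12,
\]
and Proposition~\ref{prop:Genus} (in the subcase $l>0$, $m<0$, $n\le 0$, with $n=0$) gives
\[
g_* \;=\; l_*^2 + \tfrac{l_*(l_*-1)}{2} \;=\; \tfrac{l_*(3l_*-1)}{2}.
\]
Since both expressions are strictly monotone in $l_*\ge 2$, the integer $l_*$ is uniquely recovered from either $r_*$ or $g_*$ alone, so any $k(l,m,0,p)$ with the same slope and genus as $J_*$ has the same associated invariant $l_*$.

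Second, for $K=k(l,m,0,p)$ the slope identity (\ref{eq:rSlope}), after the elementary rearrangement $l(2m-1)(1-lm)=-l(2m-1)(lm-1)$, takes the form
\[
l(2m-1)(lm-1)\;-\;p(2lm-l-1)^2 \;=\; 3\,l_*(l_*+1). \tag{$\ast$}
\]
Pairing $(\ast)$ with the genus equation $g=g_*$ (which splits into four sign subcases via Proposition~\ref{prop:Genus} for $l>0$ and four more via (\ref{eq:Genus0p}) for $l<0$) yields a two-equation polynomial system in the three unknowns $(l,m,p)$. The divisibility hypothesis $(1-2p)\mid l$ is vacuous when $p\in\{0,1\}$, and otherwise, writing $l=k(2p-1)$, it lets one factor the left-hand side of $(\ast)$. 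The leading term of $(\ast)$ in $(l,m,p)$ is of order $(1-2p)\,l^2 m^2$, while the right-hand side is only of order $l_*^2$; this forces $|l|,|m|,|p|$ into a bounded range depending on $l_*$. Within that bounded range, the companion genus equation eliminates all candidates except the parameterizations $(l_*,-1,0)$ and $(-l_*-1,-1,0)$ (which represent the same knot $J_*$ by Proposition~\ref{prop:Dupl}(c)) and, when $l_*=2$, the additional tuple $(2,2,0)$ (which represents $J_*$ by Proposition~\ref{prop:Dupl}(d)).

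The main obstacle is the bookkeeping. Eight sign subcases arise from Proposition~\ref{prop:Genus} and (\ref{eq:Genus0p}), each producing a different quartic genus expression that must be matched against $g_*$ alongside $(\ast)$; these need to be enumerated systematically, with sign-based growth estimates used to kill the ``generic'' subcases before a finite check disposes of the small-parameter ones. Equally important is the need to recognize the duplicate parameterizations of $J_*$ arising from Proposition~\ref{prop:Dupl}(c)(d), since several of them satisfy $(1-2p)\mid l$ and must be identified as $J_*$ rather than counted as counterexamples. Once these cases are sorted out, the divisibility constraint rules out every remaining candidate and the proposition follows.
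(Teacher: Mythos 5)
Your setup is correct: the formulas for $r_*$ and $g_*$, the rearranged slope equation $(\ast)$, and the recognition that the duplicate parameterizations from Proposition~\ref{prop:Dupl}(c)(d) must be identified rather than treated as counterexamples all match the paper. But the core of the argument is missing. Your elimination strategy rests on the claim that $(\ast)$ forces $|l|,|m|,|p|$ into ``a bounded range depending on $l_*$,'' after which ``a finite check disposes of the small-parameter ones.'' Since the proposition is asserted for every $l_*>1$, a bound depending on $l_*$ does not reduce the problem to a finite verification: you would still need a uniform argument that works for all $l_*$ simultaneously, and you have not supplied one. Saying that ``the companion genus equation eliminates all candidates except'' the known duplicates is a restatement of the proposition, not a proof of it.

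The paper's proof supplies exactly the two mechanisms you are missing. First, setting $s=r+\frac12$ and $d=-s-2g$, the target family satisfies the exact identity $-\frac43 s+1=(\frac d2+1)^2$ (equation (\ref{eq:sd})); combining this with inequalities for $d$ and $s$ coming from the genus and slope formulas yields $\frac{(1-p)^2}{4}(2ml-l-1)^2<-\frac43 s+1<\frac43(1-p)(2ml-l-1)^2$, hence $1-p<\frac{16}{3}$, i.e.\ $-p\le 4$ \emph{uniformly in $l_*$}. The cases $p\in\{-1,-2,-3,-4\}$ are then killed not by enumeration but by arithmetic obstructions: the hypothesis $(1-2p)\mid l$ forces $3\mid l$, $5\mid l$ or $7\mid l$, and reducing (\ref{eq:s0p}) and (\ref{eq:sd}) modulo $3$, $5$, $7$ produces either a contradiction with $3\mid(-s)$ or a nonresidue ($5$ mod $7$, resp.\ $2$ mod $5$). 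The case $p>0$ is dispatched by the sign of $s$ alone. Second, for $p=0$ the paper does not estimate at all: Proposition~\ref{prop:lmFormula} inverts the system exactly, expressing $(l,m)$ as explicit functions of $(s,d)$ in each of three sign regimes, so at most three parameter pairs share a given $(g,r)$, and Lemma~\ref{lem:lm>0} pins down the only possible coincidence in the regime $l>0,m>0$ as $(2,2)$ matching $(-3,-1)$. Without some substitute for these two steps --- a uniform bound on $p$ and an exact (or congruence-based) elimination rather than a case check whose size grows with $l_*$ --- your plan does not close.
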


Let $$s=r+\frac12,\quad d=-s-2g.$$ For $k(l,m,0,p)$, by  (\ref{eq:rSlope}),
\begin{equation}\label{eq:s0p}
-s=-p(2ml-l-1)^2+(2ml-l)(ml-1).
\end{equation}
When $p\le0$, using (\ref{eq:Genus0p}) , we get
\begin{equation}\label{eq:d}
d=\left\{\begin{array}{ll}
-p(2ml-l-1)+3ml-l-2\alpha, &\text{if } lm>0,\\
-p(-2ml+l+1)-3ml+l, &\text{if } lm<0,
\end{array}\right.
\end{equation}
where
$$
\alpha=\left\{\begin{array}{ll}
1, &\text{if }l>0,m>0,\\
2, &\text{if }l<0,m<0.
\end{array}\right.
$$

Consider
 the family of knots $k(l_*,-1,0,0)$, where $l_*>1$.
In this case, $d,s$ are given by
\begin{equation}\label{eq:ds-100}
\left\{
\begin{array}{rcl}
d&=&4l_*\\
s&=&-3l_*(l_*+1),
\end{array}
\right.
\end{equation}
so
\begin{equation}\label{eq:sd}
-\frac43s+1=(\frac d2+1)^2.
\end{equation}

\begin{lem}\label{lem:p<0}
Suppose that $p<0$ and $(1-2p)|l$, then the knot $k(l,m,0,p)$ has different $(g,r)$ from the knot $k(l_*,-1,0,0)$, where $l_*>1$.
\end{lem}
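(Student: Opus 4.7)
The plan is to leverage the algebraic identity (\ref{eq:sd}), which for the knot $k(l_*,-1,0,0)$ asserts $(d/2+1)^2 = -4s/3+1$, or equivalently $3(d+2)^2+16s=12$. If $k(l,m,0,p)$ shared the same $(g,r)$, equivalently the same $(d,s)$, then this single Diophantine constraint would also be satisfied by the $d,s$ of $k(l,m,0,p)$. The goal is to show that this constraint is incompatible with the hypotheses $p<0$ and $(1-2p)\mid l$.

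First I would set $A=l(2m-1)-1$ and rewrite (\ref{eq:s0p}) as $-s=-pA^2+l(2m-1)(ml-1)$. Using (\ref{eq:d}) together with the explicit genus formula (\ref{eq:Genus0p}), a direct computation yields
\[
d+2=\begin{cases}-pA+l(3m-1),&\text{if }l>0,\,m>0,\\ pA-l(3m-1)+2,&\text{if }lm<0,\\ -pA+l(3m-1)-2,&\text{if }l<0,\,m<0.\end{cases}
\]
Substituting into $T:=3(d+2)^2+16s-12$ and then imposing $l=qk$ with $q:=1-2p\geq 3$ (so $q$ is odd), I would reduce modulo $q$. Since $A\equiv -1\pmod q$ and $2p\equiv 1\pmod q$, a short arithmetic check yields $T\equiv -13/4\pmod q$ in the first case and $T\equiv 11/4\pmod q$ in the other two; because $\gcd(4,q)=1$, the constraint $T=0$ forces $q\mid 13$ or $q\mid 11$, i.e.\ $p\in\{-6,-5\}$.

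It remains to eliminate these two exceptional primes by direct analysis of $T(qk,m,p)=0$, viewed as a quadratic in $k$ with coefficients polynomial in $m$. For $p=-6$ in the case $l>0,\,m>0$, the constant term $p(3p+16)-12$ of this quadratic in $l$ vanishes identically, so the equation becomes linear in $l$, of the form $l\,C_2(m)+C_1(m)=0$. Reducing modulo $13$ pins down $m\equiv 10\pmod{13}$, and writing $m=13j+10$ the relation takes the form $k\,P(j)=Q(j)$ with $P$ quadratic and $Q$ linear in $j$; growth considerations leave only $j=0,\pm 1$ to inspect, each failing to produce an integer $k$. For $p=-5$ in the three remaining cases, division of the constraint by $11$ produces an equation $11\,C_2(m)k^2+C_1(m)k+5=0$ with integer coefficients, so by the integer root theorem every integer solution satisfies $k\mid 5$, giving $k\in\{\pm1,\pm5\}$; substituting each value yields a quadratic in $m$ whose discriminant I would verify is not a nonnegative perfect square (for instance, at $k=1$ the discriminant equals $56336$, strictly between $237^2$ and $238^2$). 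The main obstacle is keeping the four $(l,m)$-sign subcases straight, but the formulas collapse to just two modular verifications, and the residual checks at $p=-6,-5$ are finite and elementary.
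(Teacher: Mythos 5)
Your proof is correct, but it takes a genuinely different route from the paper's. The paper first uses \emph{inequalities}: combining the crude bounds $d+2>(1-p)|2ml-l-1|$ and $-s+\tfrac34<(1-p)(2ml-l-1)^2$ with the identity (\ref{eq:sd}) gives $1-p<\tfrac{16}{3}$, hence $-p\le4$; it then kills $p\in\{-1,-2,-3,-4\}$ by short congruence and quadratic-residue arguments mod $3$, $7$, $5$, which is the only place the hypothesis $(1-2p)\mid l$ is used. You instead deploy that divisibility hypothesis at the outset: reducing $T=3(d+2)^2+16s-12$ modulo $q=1-2p$ (where $l\equiv0$, $p\equiv\tfrac12$) leaves only the constant term of $T$ in $l$, namely $3p^2+16p-12\equiv-\tfrac{13}{4}$ when $l,m>0$ and $3p^2+4p\equiv\tfrac{11}{4}$ otherwise, forcing $q\in\{13,11\}$, i.e.\ $p\in\{-6,-5\}$; I checked your claimed values of $d+2$, the constant terms, the vanishing of $3p^2+16p-12$ at $p=-6$, the residue $m\equiv10\pmod{13}$, the reduced equation $11C_2(m)k^2+C_1(m)k+5=0$ at $p=-5$, and the discriminant $56336$ at $k=1$ (the remaining values $k=-1,\pm5$ give discriminants $-33776$, $422416$, $-28144$, none a nonnegative perfect square), and all are right. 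What each approach buys: the paper's inequality step is soft and avoids expanding $T$ as an explicit polynomial, but pays for it with the residue checks at the end; your congruence is a clean one-step reduction but requires the explicit quadratics in $l$ and $m$ and the finite Diophantine endgame. Amusingly, the two arguments pin down \emph{disjoint} ranges of $p$ ($-p\le4$ versus $-p\in\{5,6\}$), so grafting the paper's inequality onto your congruence would finish the lemma instantly with no residual cases at all.
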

\begin{proof}
Otherwise, assume $k(l,m,0,p)$ has the same $(g,r)$ as $k(l_*,-1,0,0)$.
Using (\ref{eq:d}), we see that
$$
d+2>-p|2ml-l-1|+|2ml-l-1|.
$$
Using (\ref{eq:s0p}) and (\ref{eq:lmCondition}), we have
$$
-s+\frac34<-p(2ml-l-1)^2+(2ml-l)(ml-1)+1<(1-p)(2ml-l-1)^2.
$$
Using (\ref{eq:sd}) and the previous two inequalities, we get
\begin{eqnarray*}
\frac{(1-p)^2}4(2ml-l-1)^2&<&(\frac d2+1)^2\\
&=&-\frac43s+1\\
&<&\frac43(1-p)(2ml-l-1)^2,
\end{eqnarray*}
so $1-p<\frac{16}3$, hence $-p\le4$.

If $p=-1$ or $-4$, $3|l$. By (\ref{eq:s0p}), $3\nmid -s$, which contradicts (\ref{eq:ds-100}).

If $p=-3$, then $7|l$. By (\ref{eq:s0p}), $-s\equiv3\pmod7$. It follows from (\ref{eq:sd}) that $5$ is a quadratic residue modulo $7$, which is not true.

If $p=-2$, then $5|l$ and $-s\equiv2\pmod5$.  It follows from (\ref{eq:sd}) that $2$ is a quadratic residue modulo $5$, which is not true.
\end{proof}

For the family of knots $k(l,m,0,0)$, the following proposition expresses $l,m$ in terms of $s$ and $d$.

\begin{prop}\label{prop:lmFormula}
For the family of knots $k(l,m,0,0)$, the pair $(l,m)$ is determined by $s$ and $d$ by the following formulas:\newline
If $lm>0$, then
\begin{eqnarray*}
l&=&\frac{d+2\alpha+3-3\sqrt{(d+2\alpha-1)^2+4s}}2,\\
m&=&\frac13\left(\frac{2d+4\alpha}{d+2\alpha+3-3\sqrt{(d+2\alpha-1)^2+4s}}+1\right).
\end{eqnarray*}
If $lm<0$, then
\begin{eqnarray*}
l&=&\frac{-d+3+3\sqrt{(d+1)^2+4s}}2,\\
m&=&\frac13\left(1-\frac{2d}{-d+3+3\sqrt{(d+1)^2+4s}}\right).
\end{eqnarray*}
\end{prop}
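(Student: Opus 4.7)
The strategy is to reduce the system of equations relating $(l,m)$ to $(s,d)$ — obtained by specializing (\ref{eq:s0p}) and (\ref{eq:d}) to $p=0$ — to a quadratic in a single auxiliary variable and to pin down the correct branch of the quadratic formula using the hyperbolicity restrictions in (\ref{eq:lmCondition}).

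First I would separate into the two cases. With $p=0$, (\ref{eq:s0p}) becomes $-s=(2ml-l)(ml-1)$ in both cases, while (\ref{eq:d}) gives $d=3ml-l-2\alpha$ when $lm>0$ and $d=-3ml+l$ when $lm<0$. In the case $lm>0$, set $u=ml$; the equation for $d$ gives $l=3u-d-2\alpha$, and substituting into the expression for $-s$ yields the quadratic
\[
u^2-(d+2\alpha+1)u+(d+2\alpha-s)=0,
\]
whose discriminant is exactly $(d+2\alpha-1)^2+4s$. Analogously, in the case $lm<0$, setting $v=ml$ and using $l=d+3v$ produces the quadratic
\[
v^2+(d-1)v-(d+s)=0
\]
with discriminant $(d+1)^2+4s$.

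Next I would identify the correct root. Since the two roots of the first quadratic sum to $d+2\alpha+1$, the choice comes down to the sign of $2u-(d+2\alpha+1)$; substituting $u=ml$ and $d+2\alpha=3ml-l$ simplifies this to $-(l(m-1)+1)$. The restrictions in (\ref{eq:lmCondition}) together with the condition $(m,n)\ne (1,0)$ (which for our $n=0$ family forces $m\ne 1$) guarantee $|l|\ge 2$, $|m|\ge 1$, $m\ne 1$, from which one checks $l(m-1)\ge 2>0$ in both sub-cases $l,m>0$ and $l,m<0$. Hence $u=ml$ is the smaller root, i.e.\ the one with the minus sign before the square root, and $l=3u-d-2\alpha$ gives the stated formula $l=(d+2\alpha+3-3\sqrt{(d+2\alpha-1)^2+4s})/2$. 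In the $lm<0$ case the analogous sign criterion reduces to $l(1-m)>1$, which a direct case check using (\ref{eq:lmCondition}) confirms for both $l>0,m<0$ and $l<0,m>0$; this selects the larger (less negative) root and produces the stated formula for $l$ with the plus sign.

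Finally, the expression for $m$ follows from $m=u/l$ in the first case and $m=v/l$ in the second: the relation $3u=l+d+2\alpha$ gives $m=(l+d+2\alpha)/(3l)=\tfrac{1}{3}((d+2\alpha)/l+1)$, and substituting the closed form for $l$ in the denominator yields the expression in the proposition; the $lm<0$ case is identical using $3v=l-d$. The only real obstacle is the sign analysis — everything else is routine algebra — and it is handled uniformly by the two inequalities $l(m-1)+1>0$ and $l(1-m)>1$, each of which is immediate from the hyperbolicity restrictions on $(l,m)$.
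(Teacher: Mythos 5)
Your proof is correct and follows essentially the same route as the paper's: both reduce the $p=0$ system to a quadratic (the paper's is in $l$ after eliminating $ml$, yours is in the auxiliary variable $u=ml$) and then use the hyperbolicity restrictions to select the correct branch of the square root. The only difference is cosmetic --- the paper picks the root via the bounds $d>0$ and $|2l|\le d+2\alpha$ (resp.\ $|2l|\le d$), while you compare $2u$ with the sum of the two roots and verify $l(m-1)+1>0$ (resp.\ $l(1-m)>1$) --- and both criteria check out.
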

\begin{proof}
Using (\ref{eq:d}), we can express $ml$ as a linear function of $d$ and $l$. Substitute such expression of $ml$ into (\ref{eq:s0p}), we get
\begin{equation}\label{eq:lQuad}
-s=\left\{
\begin{array}{ll}
\frac19(-l+2d+4\alpha)(l+d+2\alpha-3), &\text{if }lm>0,\\
&\\
\frac19(-l-2d)(l-d-3), &\text{if }lm<0.
\end{array}
\right.
\end{equation}

If $lm>0$,
by (\ref{eq:lQuad}), $l$ is a root of the quadratic polynomial
\begin{equation}\label{eq:Quad>0}
(x-2d-4\alpha)(x+d+2\alpha-3)-9s,
\end{equation}
whose two roots are
$$\frac{d+2\alpha+3\pm3\sqrt{(d+2\alpha-1)^2+4s}}2.$$
By (\ref{eq:d}), $d>0$ and $|2l|\le d+2\alpha$, so
$$l=\frac{d+2\alpha+3-3\sqrt{(d+2\alpha-1)^2+4s}}2.$$
Using (\ref{eq:d}) again, we can compute $m$ as in the statement.

If $lm<0$, $l$ is a root of the
quadratic polynomial
\begin{equation}\label{eq:Quad<0}
(x+2d)(x-d-3)-9s,
\end{equation}
whose two roots are
$$\frac{-d+3\pm3\sqrt{(d+1)^2+4s}}2.$$
By (\ref{eq:d}), $d>0$ and $|2l|\le d$, so
$$l=\frac{-d+3+3\sqrt{(d+1)^2+4s}}2.$$
Using (\ref{eq:d}) again, we can compute $m$ as in the statement.
\end{proof}

\begin{lem}\label{lem:lm>0}
Suppose that two knots $k(l,m,0,0)$ and $k(l_*,m_*,0,0)$ have the same $g$ and $r$. Suppose further that $$lm,l_*m_*>0,\qquad l>0>l_*.$$
Then the quadruple $(l,m,l_*,m_*)$ is either $(2,2,-3,-1)$ or
$(6,m,-2,-3m+1)$ for $m\ge2$.
\end{lem}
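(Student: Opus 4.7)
The plan is to reduce the hypothesis to a single Diophantine equation in the four parameters and then to enumerate its solutions under the hyperbolicity conditions in (\ref{eq:lmCondition}).

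Since $s = r + \tfrac12$ and $d = -s - 2g$ are determined by $r$ and $g$, the two knots share common values of $s$ and $d$. Specializing (\ref{eq:s0p}) and (\ref{eq:d}) to $p=0$ with $lm > 0$ gives
\[
-s = (2ml - l)(ml - 1), \qquad d = 3ml - l - 2\alpha,
\]
where $\alpha = 1$ when $l, m > 0$ and $\alpha = 2$ when $l, m < 0$. Introducing $a = ml - m_* l_*$, the two formulas for $d$ combine into $3a = l - l_* - 2$, so $l_* = l - 3a - 2$. The hypothesis $l > 0 > l_*$, together with $a\in\z$, forces $a \geq 1$. Substituting $l_* = l - 3a - 2$ and $m_* l_* = ml - a$ into the identity between the two expressions for $-s$, the quadratic terms in $ml$ cancel and I expect to be left with the linear relation
\[
l\bigl((a-2)m - a\bigr) = -(a+1)(a+2).
\]

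The rest of the argument is an enumeration on $a$. The case $a = 2$ forces $l = 6$ independently of $m$, producing the family $(l, m, l_*, m_*) = (6, m, -2, 1 - 3m)$, which is admissible for all $m \geq 2$. For $a \neq 2$ I solve
\[
l = \frac{(a+1)(a+2)}{a - (a-2)m}
\]
and impose $l \geq 2$, $l \in \z$, and $l_* = l - 3a - 2 < 0$. The main obstacle is the case $a \geq 3$: for $m \geq 3$ the denominator is negative so $l < 0$, and for $m = 2$ only $a = 3$ yields an integer value, namely $l = 20$, but then $l_* = 9 > 0$ contradicts $l_* < 0$, ruling out this branch. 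For $a = 1$ the equation becomes $l = 6/(m+1)$; integrality together with $l \geq 2$ and $m \geq 2$ (note that $m = 1$ is excluded by $(m,n) \neq (1,0)$ in (\ref{eq:lmCondition})) forces $(l, m) = (2, 2)$, and then $(l_*, m_*) = (-3, -1)$. I will also verify along the way that the admissibility list in (\ref{eq:lmCondition}) eliminates no member of the two surviving families; in particular the forbidden pair $(l_*, m_*) = (-2, -1)$ never arises within $(6, m, -2, 1-3m)$ since $1 - 3m = -1$ has no integer solution with $m \geq 2$.
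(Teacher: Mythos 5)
Your argument is correct and arrives at exactly the statement's two families, but by a genuinely different route from the paper. You eliminate variables directly: with $a=ml-m_*l_*$, equating the two expressions for $d$ gives the linear relation $l_*=l-3a-2$ (hence $a\ge1$), and equating the two expressions for $-s$ then collapses, after the quadratic terms in $ml$ cancel, to the single relation $l\bigl((a-2)m-a\bigr)=-(a+1)(a+2)$; I verified this identity and your enumeration over $a$, and the case analysis is sound. The paper instead solves for $l$ and $l_*$ explicitly via the quadratic formula (Proposition~\ref{prop:lmFormula}), so that integrality forces $(d+1)^2+4s$ and $(d+3)^2+4s$ to be perfect squares pinned into narrow windows; writing them as $(d+1-2c)^2/9$ and $(d+11+2c_*)^2/9$ with $c\in\{0,\dots,4\}$, $c_*\in\{0,1,2,3\}$ gives $l=c+2$, $l_*=-c_*-2$, a relation among $c,c_*,d$, and a mod~$3$ constraint $c+c_*\in\{1,4,7\}$, followed by a finite check (and it needs the auxiliary bound $d\ge8$). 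Your elimination buys a cleaner, purely Diophantine argument with no square-root bookkeeping, at the cost of being special to this configuration rather than reusing the general inversion formulas the paper has already set up. Two small imprecisions in your branch $a\ge3$, neither of which affects the conclusion: at $(a,m)=(3,3)$ the denominator $a-(a-2)m$ is zero rather than negative (the equation $l\cdot0=-20$ then simply has no solution), and for $m=2$, $a\ge5$ the quantity $(a+1)(a+2)/(4-a)$ can be a negative integer, so the accurate statement is that $a=3$ is the only value yielding $l\ge2$; in every such case there is still no admissible $l>0$ with $l_*<0$.
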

\begin{proof}
Using Proposition~\ref{prop:lmFormula} and (\ref{eq:lmCondition}), we get
\begin{equation}\label{eq:l_ge2}
l=\frac{d+5-3\sqrt{(d+1)^2+4s}}2\ge2
\end{equation}
and
\begin{equation}\label{eq:l'le-2}
l_*=\frac{d+7-3\sqrt{(d+3)^2+4s}}2\le-2.
\end{equation}
Hence
\begin{equation}\label{eq:4s}
\frac{(d+11)^2}9-(d+3)^2\le4s\le-\frac89(d+1)^2,
\end{equation}
which implies
$$\frac19(d-7)^2\le (d+1)^2+4s\le \frac19(d+1)^2.$$
From (\ref{eq:d}) and (\ref{eq:lmCondition}), we can conclude that
\begin{equation}\label{eq:d_ge8}
d\ge8.
\end{equation}
By (\ref{eq:l_ge2}), $(d+1)^2+4s$ is a perfect square which has the same parity as $d+1$, hence
\begin{equation}\label{eq:2c}
(d+1)^2+4s=\frac{(d+1-2c)^2}9,
\end{equation}
for some $c\in\{0,1,2,3,4\}$.
Then $$l=c+2,\quad m=\frac{d+c+4}{3(c+2)}.$$

Using (\ref{eq:4s}), we also get
$$\frac{(d+11)^2}9\le (d+3)^2+4s\le \frac{d^2+38d+73}9<\frac{(d+19)^2}9.$$
By (\ref{eq:l'le-2}), $(d+3)^2+4s$ is a perfect square with the same parity as $d+3$, so
\begin{equation}\label{eq:2c'}
(d+3)^2+4s=\frac{(d+11+2c_*)^2}9
\end{equation}
for some $c_*\in\{0,1,2,3\}$.
Then
$$l_*=-c_*-2,\quad m_*=-\frac{d-c_*+2}{3(c_*+2)}.$$
Compare (\ref{eq:2c}) and (\ref{eq:2c'}), we get
\begin{equation}\label{eq:dcc'}
(4-c_*-c)d=(5+c_*+c)(6+c_*-c)-18.
\end{equation}
Moreover, since both $\frac{d+1-2c}3$ and $\frac{d+11+2c_*}3$ are integers, $\frac{10+2c_*+2c}3$ is an integer, so
$$
c+c_*\in\{1,4,7\}.
$$

If $c+c_*=7$, then $c=4,c_*=3$. By (\ref{eq:dcc'}), $d=-14$, a contradiction to (\ref{eq:d_ge8}).

If $c+c_*=4$, using (\ref{eq:dcc'}) we get $c=4,c_*=0$. Hence $l=6, m=\frac{d+8}{18}$, $l_*=-2,m_*=-\frac{d+2}6=-3m+1$.

If $c+c_*=1$, using (\ref{eq:dcc'}) we get $d=6+2(c_*-c)$. Using (\ref{eq:d_ge8}), the only possible case is
$c=0,c_*=1$, and $(l,m,l_*,m_*)=(2,2,-3,-1)$.
\end{proof}

\begin{proof}[Proof of Proposition~\ref{prop:l'-100}]
When $p<0$, this result follows from Lemma~\ref{lem:p<0}.

When $p>0$, from the formula (\ref{eq:s0p}) it is easy to see $s>0$ for $k(l,m,0,p)$, but $s=-3l_*(l_*+1)<0$ for $k(l_*,-1,0,0)$.

Now we consider the case $p=0$.
By Proposition~\ref{prop:lmFormula}, for any given $(g,r)$, there are at most three knots $k(l_i,m_i,0,0)$, $i=1,2,3$, having this $(g,r)$. There is at most one pair $(l_i,m_i)$ in each of the three cases:
\begin{itemize}
\item $l>0,m>0,$
\item $l<0,m<0,$
\item $lm<0.$
\end{itemize}
The pair $(l_*,-1)$ is in the third case. By Proposition~\ref{prop:Dupl} (c), $k(-l_*-1,-1,0,0)=k(l_*,-1,0,0)$, and $(-l_*-1,-1)$ is in the second case. Suppose that there is also a pair $(l,m)$ in the first case with the same $(g,r)$, then
Lemma~\ref{lem:lm>0} implies that $(l,m)=(2,2)$ and $-l_*-1=-3$. By Proposition~\ref{prop:Dupl} (d), the three knots $k(2,2,0,0),k(-3,-1,0,0),k(2,-1,0,0)$ are the same.
\end{proof}


\section{Proof of Theorem~\ref{main2}}\label{proof2}

Recall that the unique half-integral toroidal slope of
$k(l_*,-1,0,0)$ is
$r=(2s-1)/2$, where $s=-3l_*(l_*+1)$.
Hence the family of  knots $\{k(l_*,-1,0,0); l_*>1\}$
are mutually distinct.

Suppose that $K\subset S^3$ is a knot which has the same $A$-polynomial and
the same knot Floer homology as a  given $J_*=k(l_*,-1,0,0)$, $l_*>1$.
Our goal is to show that $K=J_*$.

As $J_*$ is hyperbolic, $K$ cannot be a torus knot by Theorem~\ref{main1}.

Suppose $K$ is hyperbolic.
Then by Lemma~\ref{same r slope},
 $K$ has the same half-integral toroidal slope
 $r$  as $J_*$
and $K$ is one of $k(l,m,n,p)$.
Applying Lemma~\ref{(l,m,n) is different from (l',m')}, we have
$n=0$ or $1$.
Since the half-integral toroidal slope of
 $k(l,m,1,0)$ is positive while the half-integral toroidal slope
 of $J_*=k(l_*,-1,0,0)$ is negative, we see that $n$ must be zero.
Similarly applying Lemma~\ref{(l,m,p) is different from (l',m')}, we have $p$ is non-positive, and
$2p-1$ divides $l$.
That is, we have $K=k(l,m,0,p)$ with $(2p-1)|l$.
Now by Proposition~\ref{prop:l'-100}, we have
$K=J_*$.

It remains to show that
$K$ cannot be a satellite knot.
Suppose otherwise that $K$ is a satellite knot.
We are going to derive a contradiction from this assumption.

\begin{lem}\label{A_J has no seminorm factor}
The $A$-polynomial $A_{\sj_*}(x,y)$ of $J_*=k(l_*,-1,0,0)$ does not contain
any factor of the form  $x^{j}y+\d$
or $y+\d x^{-j}$ for  $j\in\z$ and $\d\in\{-1,1\}$.
\end{lem}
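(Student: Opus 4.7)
The plan is to argue by contradiction. Suppose $A_{J_*}(x,y)$ contains a factor of the form $f_0(x,y)=x^{j}y+\d$; the case $y+\d x^{-j}$ is handled by the same argument after swapping the roles of the monomials. Let $X_0\subset X^*(M_{J_*})$ be an irreducible component contributing $f_0$.

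First I would show that $X_0$ is a semi-norm curve, not a norm curve. The Newton polygon of $f_0$ is the single line segment from $(0,0)$ to $(j,1)$, hence one-dimensional, with only one edge slope. By Theorem~\ref{dual polygons}, if $X_0$ were a norm curve, the associated norm polygon $B_0$ would be a two-dimensional convex symmetric polygon whose vertex slopes equal the edge slopes of this Newton polygon; but a two-dimensional symmetric convex polygon must have at least two distinct vertex slopes, a contradiction. Since $J_*$ is small (Lemma~\ref{lem:Small}), the remark following Theorem~\ref{properties of a semi-norm curve} then forces $X_0$ to be a semi-norm curve.

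Next I would identify the associated slope. Parameterizing $X_0$ via eigenvalues lifted to $\Lambda$ from the equation $x^jy+\d=0$, one writes $\r(\m)=\mathrm{diag}(x,x^{-1})$ and $\r(\l)=\mathrm{diag}(-\d x^{-j},-\d x^{j})$ (using $\d^2=1$); then a direct computation gives $\r(\m^j\l)=-\d I$, which is constant on $X_0$. Hence $f_{\m^j\l}\equiv 0$ on $X_0$, so the associated slope of the semi-norm curve is $\a_0=j$, an integer slope. By Theorem~\ref{properties of a semi-norm curve}(2), $j$ is a boundary slope of $J_*$. I would then invoke Lemma~\ref{J has only 3 roots}: writing $r=D/2$ with $D$ odd and substituting $y=-\d x^{-j}$ into $x^{D}y^{2}-1=0$ gives $x^{D-2j}=1$, so every root must lie in $\{0,1,-1\}$, forcing $|D-2j|\le 2$. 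Because $D$ is odd and $2j$ is even, $|D-2j|$ is odd, hence $|D-2j|=1$; equivalently $\D(r,j)=1$.

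The main obstacle is the finishing step. The plan is to exploit the identity $r=(D-2j)\m+2(j\m+\l)$, which yields, on $X_0$, the relation $\r(r)=\r(\m)^{D-2j}\r(\a_0)^{2}=\r(\m)^{\pm 1}$. I would then examine $X_0$ at the two special points where $\r(\m)=\pm I$, namely $x=1$ and $x=-1$. At $x=1$ we have $\r(\m)=I$ and $\r(\l)=-\d I$; since the meridian normally generates $\pi_1(M_{J_*})$ (because $M_{J_*}(\infty)=S^3$ is simply connected), the condition $\r(\m)=I$ at a regular point of $\tilde X_0$ would force $\r$ to be trivial, giving $\r(\l)=I$, which is incompatible with $\r(\l)=-\d I$ when $\d=1$. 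A parallel analysis at $x=-1$, combined with the parity of $j=(D\pm 1)/2$ forced by $D\equiv 3\pmod 4$ (which follows from $D=-6l_*(l_*+1)-1$), rules out the case $\d=-1$ with $j$ even by showing that the resulting $\r$ would be a $\{\pm I\}$-valued representation whose restriction to $\l$ is forced to be $I$ by the commutator structure, contradicting $\r(\l)=-I$. The remaining case $\d=-1$ with $j$ odd is the most delicate: here I would use the fact that the resulting representations on $X_0$ descend to a positive-dimensional curve of $SL_2(\mathbb{C})$ representations of $\pi_1(M_{J_*}(j))$ generically irreducible, then combine $\D(r,j)=1$ with Corollary~\ref{prop:AbelRep} (all representations of $M_{J_*}(r)$ are cyclic) and the graph-manifold decomposition $M_{J_*}(r)=X_1\cup X_2$ from Section~\ref{topological properties of J} to locate a representation on $X_0$ that extends to a non-cyclic representation of $\pi_1(M_{J_*}(r))$, contradicting Corollary~\ref{prop:AbelRep}. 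This last case-analysis step is where I expect the proof to require the most careful work.
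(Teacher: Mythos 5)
Your route to the constraint $\D(r,j)=1$ is correct and in fact more elementary than the paper's. You identify $X_0$ as a semi-norm curve with associated slope $\m^j\l$ exactly as the paper does, and then you extract $|d-2j|=1$ (where $r=d/2$, $d=2s-1$ odd) by substituting $y=-\d x^{-j}$ into $x^dy^2-1=0$ and applying Lemma~\ref{J has only 3 roots}: all $(d-2j)$-th roots of unity would have to lie in $\{1,-1\}$, so $|d-2j|\le 2$, and parity gives $|d-2j|=1$, i.e.\ $j\in\{s-1,s\}$. The paper reaches the same conclusion through the semi-norm identity $\|\a\|_0=s_0\D(\a,\a_0)$ of Theorem~\ref{properties of a semi-norm curve}(4), after proving $\|r\|_0=\|\m\|_0$ using Corollary~\ref{prop:AbelRep} at regular points and \cite[Proposition~4.12]{BZ4} at ideal points. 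Up to this point your argument is fine.

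The finishing step is a genuine gap. What is needed once $j\in\{s-1,s\}$ is the topological input that neither $s$ nor $s-1$ is a boundary slope of $J_*$: the paper quotes \cite[Proposition~5.4]{EM2}, which says that $M_{J_*}(s)$ and $M_{J_*}(s-1)$ are small Seifert fibred spaces, and combines this with the smallness of $J_*$ (Lemma~\ref{lem:Small}) and \cite[Theorem~2.0.3]{CGLS} to conclude that these integer slopes cannot be boundary slopes, contradicting Theorem~\ref{properties of a semi-norm curve}(2). Your substitute---a case analysis at the points $x=\pm1$ of the plane curve---cannot work, for two reasons. First, a point of the Zariski closure $D_0$ with $x_0=\pm1$ need not be realized by a representation with $\r(\m)=\pm I$; such points are typically limits of characters (or correspond to representations with $\r(\m)$ parabolic), so there is no representation to which the "meridian normally generates" argument applies. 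A concrete sanity check: $A_{T(p,2)}(x,y)=1+x^{2p}y$ is exactly a factor of the forbidden shape with $\d=1$, so any argument that rules out $\d=1$ using only the curve equation at $x=1$ and the normal generation of $\pi_1$ by $\m$ must be wrong. Second, the remaining case $\d=-1$, $j$ odd is only sketched, and the proposed contradiction with Corollary~\ref{prop:AbelRep} via $\D(r,j)=1$ is not established. Since the statement is false for general knots, the proof must use something specific to $J_*$ beyond the congruence class of $d$; that missing ingredient is precisely the fact that $s$ and $s-1$ are small Seifert fibred surgery slopes of the small knot $J_*$.
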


\pf Suppose otherwise that $A_{\sj_*}(x,y)$    contains
a  factor of the form   $x^{j}y+\d$
or $y+\d x^{-j}$. As this factor is irreducible and balanced,
it is contributed by  a curve component $X_0$ in $X^*(M_{\sj_*})$.
Moreover $X_0$ is a semi-norm curve with
 $\m^j\l$ as the unique associated boundary slope.

 We claim that
   $\m^j\l$ is  either $\m^s\l$ or $\m^{s-1}\l$.
From Theorem~\ref{properties of a semi-norm curve} (3) and Lemma~\ref{meridian not bdy slope}, we see that the meridian slope
$\m$ has the minimal semi-norm $s_0$.
To prove the claim, we just need to show, by Theorem~\ref{properties of a semi-norm curve} (4),  that for the half-integral toroidal slope $r$ of $J_*$, we have
$\|r\|_0=\|\m\|_0$, which is equivalent to show
that $$Z_v(\tilde f_r)\leq Z_v(\tilde f_\m)$$ for every
$v\in \tilde X_0$.
As $M_{\sj_*}(r)$ has no non-cyclic representation by
Lemma~\ref{prop:AbelRep},
at every regular point $v\in\tilde X_0$ we have
$Z_v(\tilde f_r)\leq Z_v(\tilde f_\m)$.
If at an ideal point $v$ of $\tilde X_0$,
$Z_v(\tilde f_r)> Z_v(\tilde f_\m)$,
then $\tilde f_\a(v)$ is finite for every class
$\a$ in $H_1(\p M_{\sj_*}; \z)$ (because both $\tilde f_{r}$
and $\tilde f_{\m^j\l}$ are finite at $v$).
We can now derive a contradiction with  \cite[Proposition~4.12]{BZ4}
just as in the proof of Lemma~\ref{r is a vertex}, and the claim is thus proved.

On the other hand  it is shown in \cite[Proposition~5.4]{EM2} that each of
$M_{\sj_*}(s)$ and $M_{\sj_*}(s-1)$ is a small Seifert
fibred space. As $J_*$ is a small knot, each of  $\m^s\l$ and $\m^{s-1}\l$
 cannot be a boundary slope
by \cite[Theorem~2.0.3]{CGLS} and thus cannot be the slope $\m^j\l$.
We arrive at a contradiction.
\qed

Since $J_*$ is a fibered knot by \cite{EM2}, $K$ is also fibered.
Hence if $(C,P)$ is any pair of companion knot and pattern knot associated to $K$, then
each of $C$ and $P$ is fibred and the winding number $w$ of $P$ with respect to $C$
is larger than zero.

\begin{lem}\label{hyp companion}
The  satellite knot  $K$ has a companion knot $C$
which is hyperbolic.
\end{lem}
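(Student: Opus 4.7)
My plan is to adapt the ``innermost companion'' argument already used in the proof of Theorem~\ref{main1}, and rule out the torus knot possibility by a form-matching argument using Lemma~\ref{A_J has no seminorm factor}.

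First I would choose the companion $C$ of $K$ so that $C$ itself is not a satellite knot; such a $C$ exists by iterating the satellite decomposition (as was done in the proof of Theorem~\ref{main1}, with \cite{Soma} ensuring termination), or equivalently by taking the knot-type of the innermost JSJ companion solid torus of $K$. Thus $C$ is either hyperbolic (and we are done) or a nontrivial torus knot. I will derive a contradiction from the second possibility. Since $J_*$ is fibered \cite{EM2} and $K$ has the same knot Floer homology as $J_*$, $K$ is fibered by \cite{Gh,Ni}; hence by \cite[Corollary~4.15, Proposition~8.23]{BurdeZieschang} the pattern knot $P$ associated to $C$ is fibered and the winding number $w$ of $P$ in its defining solid torus satisfies $w\geq 1$.

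Now suppose for contradiction that $C=T(a,b)$ is a torus knot. By formula~(\ref{Apoly of T(p,q)}), $A_\sc(\overline{x},\overline{y})=F_{(a,b)}(\overline{x},\overline{y})$, and by Lemma~\ref{lem:IrrFac} each irreducible factor of $F_{(a,b)}$ over $\c$ has the form $\overline{y}\,\overline{x}^n+\d$ or $\overline{y}+\d\overline{x}^n$ for some nonnegative integer $n$ and $\d\in\{\pm 1\}$. Fix such an irreducible factor $f_\sc$; since $w\geq 1$, Proposition~\ref{non-zero winding}(1) applies and extends $f_\sc$ to a balanced factor $f_\sk(x,y)$ of $A_\sk(x,y)=A_{\sj_*}(x,y)$ of the form
$$f_\sk(x,y)=yx^{nw^2}-(-\d)^w\quad\text{or}\quad f_\sk(x,y)=y-(-\d)^wx^{nw^2}.$$
Each of these is of the shape $x^jy+\d'$ or $y+\d'x^{-j}$ with $j\in\z$ and $\d'\in\{\pm 1\}$, which is exactly the type of factor forbidden by Lemma~\ref{A_J has no seminorm factor}. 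This contradiction forces $C$ to be hyperbolic.

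The argument is essentially just matching the explicit shapes of factors once a non-satellite companion $C$ is chosen; I do not expect a substantial obstacle. The only point requiring care is the existence of a companion $C$ that is itself not a satellite — but this is the same standard iteration already invoked in Section~\ref{proof1}, and it is immediate that the iteration terminates at a non-satellite (hyperbolic or torus) knot. After that, the whole argument rests on combining the explicit form of $F_{(a,b)}$, the extension formula in Proposition~\ref{non-zero winding}(1), and the exclusion provided by Lemma~\ref{A_J has no seminorm factor}.
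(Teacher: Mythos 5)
Your proposal is correct and follows essentially the same route as the paper: the paper likewise reduces to ruling out a torus-knot companion, notes that $A_\sc$ then contains a factor of the form $y x^k+\delta$ or $y+\delta x^{-k}$ by formula~(\ref{Apoly of T(p,q)}), extends it to a factor of $A_\sk=A_{\sj_*}$ via Proposition~\ref{non-zero winding} using $w\geq 1$, and contradicts Lemma~\ref{A_J has no seminorm factor}. Your extra explicit appeal to Lemma~\ref{lem:IrrFac} and the innermost-companion choice are harmless refinements of the same argument.
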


\pf As is true for any satellite knot,
$K$  has a  companion knot $C$ which is either a torus
knot or a hyperbolic knot.
So we just need to rule out the possibility that
$C$ be a torus knot.
If  $C$ is a $(p,q)$-torus knot, then
by formula (\ref{Apoly of T(p,q)}), $A_\sc(x,y)$ contains a factor of the form
$yx^k+\delta$
or $y+\delta x^{-k}$ for some integer $k$ and
$\delta\in\{1,-1\}$.
As the winding number $w$ of the pattern knot $P$ with respect to $C$ is non-zero,
 by Proposition~\ref{non-zero winding},
$A_\sk(x,y)$ contains a factor of the form
$yx^{w^2k}-(-\delta)^w$
or $y-(-\delta)^w x^{-w^2k}$.
But this contradicts  Lemma~\ref{A_J has no seminorm factor}.
\qed

We now fix a hyperbolic companion knot $C$ for $K$ which exists
by Lemma~\ref{hyp companion}
and let $P$ be the corresponding pattern knot.

\begin{lem}\label{surgery on C}
For any hyperbolic knot $C$ in $S^3$, any surgery with a slope $j/k$, $\gcd(j,k)=1$, satisfying
$k>2$ and $j$ odd, will produce either a hyperbolic manifold or a  Seifert
fibred space whose base orbifold is $S^2$ with exactly three cone points.
\end{lem}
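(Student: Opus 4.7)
The plan is to apply the geometrization theorem to $M=M_\sc(j/k)$ after eliminating the reducible and toroidal possibilities by standard surgery results. Note first that $H_1(M;\z)\cong \z/|j|$ is a nontrivial finite cyclic group of odd order by the hypotheses on $(j,k)$.

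Since $k\geq 3$ the slope $j/k$ is non-integral, so by the Gordon--Luecke theorem on reducible surgeries \cite{GL} the manifold $M$ is irreducible. By Theorem~\ref{GL2}, any non-integral toroidal surgery slope on a hyperbolic knot in $S^3$ has denominator exactly $2$, so the assumption $k>2$ forces $M$ to be atoroidal as well. By Thurston's geometrization (completed by Perelman), a closed, orientable, irreducible, atoroidal $3$-manifold is either hyperbolic or Seifert fibred; in the former case we are done, so it remains to analyze the Seifert case.

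Assume $M$ is Seifert fibred. Because $M$ is orientable, closed and atoroidal, the underlying surface of its base $2$-orbifold is either $S^2$ with at most three cone points or $\mathbb{RP}^2$ with at most one cone point. A base of the form $\mathbb{RP}^2$ or $\mathbb{RP}^2(a)$ forces a $\z/2$ quotient of $H_1(M;\z)$ (as can be checked from the computation $\pi_1^{\mathrm{orb}}(\mathbb{RP}^2(a))=\z/2a$ and the usual surjection $H_1(M)\twoheadrightarrow H_1^{\mathrm{orb}}(B)$), contradicting $j$ odd. A base of the form $S^2$, $S^2(a)$, or $S^2(a,b)$ would make $M$ equal to $S^2\times S^1$, $S^3$, or a lens space; the first is ruled out by $H_1(M)$ being finite, and for the remaining two the Culler--Gordon--Luecke--Shalen cyclic surgery theorem \cite{CGLS}, applied to the two cyclic slopes $\mu=\infty$ and $j/k$ on the hyperbolic knot $C$, gives $\Delta(\mu,j/k)=k\leq 1$, contradicting $k>2$. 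Hence the base orbifold must be $S^2$ with exactly three cone points.

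The main obstacle is not any single deep step but the bookkeeping in the Seifert case, where both hypotheses on $(j,k)$ are used in an essential way: ``$j$ odd'' is precisely what excludes the $\mathbb{RP}^2$-bases, while ``$k>2$'' is used both at the outset to rule out the non-integral toroidal surgeries via Theorem~\ref{GL2} and at the end to rule out cyclic (in particular, lens-space) surgeries via CGLS.
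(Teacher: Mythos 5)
Your proof is correct and follows essentially the same route as the paper's: irreducibility from Gordon--Luecke, atoroidality from the classification of non-integral toroidal surgeries (Theorem~\ref{GL2}), geometrization, and then the cyclic surgery theorem together with the parity of $j$ to pin down the base orbifold; you simply spell out the case analysis that the paper's terser proof leaves implicit. (One harmless slip: $H_1(M;\z)\cong\z/|j|$ can be trivial when $j=\pm1$, but your argument only uses that it is finite of odd order.)
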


\pf
As $k>2$, $M_\sc(j/k)$ is irreducible \cite{GL} and atoroidal
\cite{GL2}.
Thus  $M_\sc(j/k)$ is either a hyperbolic manifold or an atoroidal Seifert fibred
space.
In latter case, the Seifert fibered space  has non-cyclic fundamental group
\cite{CGLS}.
As $p$ is odd, the base orbifold  of the Seifert fibered space cannot be non-orientable.
Thus the base orbifold is a $2$-sphere with exactly three cone points.
\qed

\begin{lem}\label{w divides 2s-1}
The integer $d=2s-1$ is divisible by  $w^2$.
\end{lem}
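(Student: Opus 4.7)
The plan is to argue by contradiction: suppose $w^2 \nmid d$. Set $e = \gcd(d, w^2)$, $j = d/e$, $k = 2w^2/e$. Since $d$ is odd, both $e$ and $j$ are odd and $\gcd(j,k)=1$; since $w^2 \nmid d$, one also has $k > 2$. Applying Lemma~\ref{surgery on C} to the hyperbolic companion $C$ at the slope $j/k$, the manifold $M_\sc(j/k)$ must be either hyperbolic or a Seifert fibered space whose base orbifold is $S^2$ with exactly three cone points.

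The central step is to establish the following claim: every simultaneous zero $(\overline x_0, \overline y_0) \in \c^*\times\c^*$ of $A_\sc(\overline x,\overline y)$ and $\overline x^{j}\overline y^{k}-1$ has $\overline x_0\in\{1,-1\}$. Granting the claim, Lemma~\ref{nonhyp surgery} and Lemma~\ref{non-small Seifert surgery} (the latter applicable because $j$ is odd) together rule out both possibilities for $M_\sc(j/k)$, contradicting Lemma~\ref{surgery on C}.

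To prove the claim I would fix a balanced-irreducible factor $f_\sc$ of $A_\sc$ vanishing at $(\overline x_0,\overline y_0)$. By Proposition~\ref{non-zero winding}, $f_\sk:=Ext^w[f_\sc]$ is a balanced factor of $A_\sk=A_{\sj_*}$, and for any $w$-th root $\tilde x$ of $\overline x_0$ and $y_0:=\overline y_0^{w}$ the pair $(\tilde x,y_0)$ is a zero of $f_\sk$. Since $\overline x_0^{d}\overline y_0^{2w^2}=(\overline x_0^{j}\overline y_0^{k})^{e}=1$, the quantity $\zeta_0:=\tilde x^{d}y_0^{2}$ is a $w$-th root of unity. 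I then plan to replace $\tilde x$ by $\omega\tilde x$ for a suitable $w$-th root of unity $\omega$, so that the new $\zeta_0$ equals exactly $1$; such an adjustment is possible precisely when $\zeta_0$ lies in the image of the $d$-th power map on the group of $w$-th roots of unity, namely in the subgroup of $(w/g)$-th roots of unity, where $g=\gcd(w,d)$. The crucial arithmetic identity to verify is
\[
\zeta_0^{w/g}\;=\;\overline x_0^{d/g}\overline y_0^{2w^2/g}\;=\;(\overline x_0^{j}\overline y_0^{k})^{e/g}\;=\;1,
\]
which is valid because $g$ divides both $d$ and $w$, hence $g\mid\gcd(d,w^2)=e$, so that $e/g$ is a positive integer. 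The adjusted pair $(x_0,y_0):=(\omega\tilde x,\,y_0)$ then satisfies $A_\sk(x_0,y_0)=0$, $x_0^{d}y_0^{2}=1$, and $x_0^{w}=\overline x_0$. Because $\overline x_0\notin\{0,\pm1\}$ would force $x_0\notin\{0,\pm1\}$, Lemma~\ref{J has only 3 roots} delivers $x_0\in\{0,\pm1\}$, whence $\overline x_0=x_0^{w}\in\{0,\pm1\}$, and the claim follows.

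The main obstacle is the arithmetic identity displayed above: the divisibility $g\mid e$ is precisely what upgrades the trivial relation $\zeta_0^{w}=1$ to the useful $\zeta_0^{w/g}=1$, and it is this refinement that allows the surgery equation $\overline x^{d}\overline y^{2w^2}=1$ on $C$ to be lifted back to the surgery equation $x^{d}y^{2}=1$ on $K$. Without it one would obtain at best a $w$-divisibility statement, rather than the $w^2$-divisibility asserted by the lemma.
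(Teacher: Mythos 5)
Your proof is correct and follows essentially the same route as the paper: pass to the reduced slope $d/(2w^2)$ on the hyperbolic companion $C$, invoke Lemma~\ref{surgery on C} together with Lemmas~\ref{nonhyp surgery} and~\ref{non-small Seifert surgery} to produce a zero of the $A$-polynomial of $C$ with first coordinate outside $\{0,1,-1\}$, lift it through Proposition~\ref{non-zero winding} to a zero of $A_{\sj_*}(x,y)$ on $x^dy^2=1$, and contradict Lemma~\ref{J has only 3 roots}. Your root-of-unity bookkeeping --- choosing the $w$-th root of $\overline x_0$ so that $x_0^dy_0^2=1$ exactly, justified by $\zeta_0^{w/g}=1$ with $g=\gcd(w,d)\mid e$ --- is in fact more careful than the paper's own proof, which passes from the relation $(x_0^{d}y_0^{2})^{w^2/e}=1$ directly to $x_0^{d}y_0^{2}=1$ without comment.
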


\pf Suppose otherwise.  Let $d_1/q_1$ be the rational  number $\frac{d}{2w^2}$ in its reduced form, i.e.  $d_1=d/\gcd(d, w^2)$ and $q_1=2w^2/\gcd(d, w^2)$.
Then $q_1>2$ and $d_1$ is odd.
So by Lemma~\ref{surgery on C}, the  surgery on $C$ with the slope $d_1/q_1$ will yield
either a hyperbolic manifold or a Seifert fibred space whose
base orbifold is a $2$-sphere with exactly three cone points.
Applying either Lemma~\ref{nonhyp surgery} or
Lemma~\ref{non-small Seifert surgery}, we see that
the $A$-polynomial $A_\sc(\bar x,\bar y)$ of $C$ has a zero point
$(\bar x_0,\bar y_0)$ such that $\bar x_0^{d_1}\bar y_0^{q_1}=1$
and $\bar x_0\notin \{0,1,-1\}$.
Now from  Proposition~\ref{non-zero winding}
and its proof, we see that
$A_\sc(\bar x,\bar y)$
can be extended to a factor $f(x,y)$ of $A_\sk(x,y)$
and the variables of $A_\sc(\bar x,\bar y)$ and $f(x,y)$ satisfying
the relations: $\bar x=x^w$, $\bar y^w=y$.
In particular for some $(x_0, y_0)$ we have
$\bar x_0=x_0^w$, $\bar y_0^w=y_0$, and $(x_0,y_0)$ is a zero point of $f(x,y)$.
Obviously $x_0\notin \{0,1,-1\}$.
From  $(\bar x_0^{d_1}\bar y_0^{q_1})^w=1$, we have
$x_0^{w^2d_1}y_0^{q_1}=1$, i.e. $x_0^{d}y_0^{2}=1$.

 As $f(x,y)$ is a factor in $A_\sk(x,y)=A_{\sj_*}(x,y)$,
we see that the system
$$\left\{\begin{array}
{ll}
A_{\sj_*}(x,y)&=0\\
x^{d}y^2-1&=0\end{array}\right.$$
has a solution $(x_0, y_0)$ with $x_0\notin\{0, 1, -1\}$.
We get a contradiction with Lemma~\ref{J has only 3 roots}.
\qed

Note that $s-1$ is a cyclic  surgery slope of $J_*$ (provided by \cite[Proposition~5.4]{EM2}).

\begin{lem}\label{s-1 has only 2 roots}
If $(x_0,y_0)$ is a solution of the system
$$\left\{\begin{array}
{ll}
A_{\sj_*}(x,y)&=0\\
x^{s-1}y-1&=0\end{array}\right.$$
then $x_0$ is either $1$ or $-1$.
\end{lem}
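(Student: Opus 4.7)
The plan is to imitate the proof of Lemma~\ref{J has only 3 roots}, substituting the cyclic surgery slope $s-1$ for the half-integral toroidal slope $r=d/2$ and invoking the smallness of $J_*$ (Lemma~\ref{lem:Small}) in place of the argument that used Lemma~\ref{only one incomp surface}. Suppose for contradiction that $(x_0,y_0)$ is a solution with $x_0 \notin \{1,-1\}$. The equation $x^{s-1}y=1$ automatically forces $x_0 \neq 0$, so in fact $x_0 \neq 0, \pm 1$. By the constructional definition of the $A$-polynomial, there is a component $X_1 \subset X^*(M_{\sj_*})$ contributing a factor $f_0(x,y)$ of $A_{\sj_*}(x,y)$ vanishing at $(x_0,y_0)$. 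Choose an irreducible curve $X_0 \subset X_1$ and a sequence of regular points $v_i \in \tilde X_0$ converging to some $v$ with $\tilde f_{s-1}(v)=0$ and $\tilde f_\mu(v) = (x_0+x_0^{-1})^2 - 4 \neq 0$.

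The first step is to show that $\tilde f_{s-1}$ is not constant on $X_0$. Otherwise $\tilde f_{s-1} \equiv 0$ on $X_0$, which (together with $X_0 \subset X^*(M_{\sj_*})$) would force $X_0$ to be a semi-norm curve with $s-1$ as its unique associated boundary slope. By the constructional definition, the factor $f_0(x,y)$ it contributes would have to be of the form $x^{s-1}y + \delta$ with $\delta \in \{\pm 1\}$; but Lemma~\ref{A_J has no seminorm factor} rules out any such factor of $A_{\sj_*}(x,y)$. Hence $Z_v(\tilde f_{s-1}) > Z_v(\tilde f_\mu) = 0$.

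Since $s-1$ is a cyclic surgery slope of $J_*$, the group $\pi_1(M_{\sj_*}(s-1))$ is cyclic and in particular admits no non-cyclic $SL_2(\c)$ representations. By \cite[Proposition 1.5.2]{CGLS} (equivalently \cite[Proposition 4.8]{BZ4}), this excludes $v$ from being a regular point, so $v$ is an ideal point of $\tilde X_0$. Both $\tilde f_\mu(v)$ and $\tilde f_{s-1}(v)$ are finite, and since $\{\mu,\mu^{s-1}\lambda\}$ is a $\z$-basis of $\pi_1(\p M_{\sj_*})$ along which the diagonal entries of the nearby boundary representations stay bounded, $\tilde f_\alpha(v)$ is finite for every class $\alpha \in H_1(\p M_{\sj_*};\z)$. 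Applying \cite[Proposition 4.12]{BZ4} exactly as in the proofs of Lemmas~\ref{r is a vertex} and~\ref{J has only 3 roots} then produces a closed essential surface in $M_{\sj_*}$, contradicting the smallness of $J_*$ established in Lemma~\ref{lem:Small}.

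The main obstacle is the semi-norm step: one must identify precisely the factor of $A_{\sj_*}$ that such a curve would contribute so that Lemma~\ref{A_J has no seminorm factor} can be applied. Once that is handled, the rest is a mechanical translation of the argument for Lemma~\ref{J has only 3 roots}, with the cyclic-surgery obstruction replacing the graph-manifold obstruction of Lemma~\ref{prop:AbelRep} and with smallness replacing the uniqueness of the incompressible torus in $M_{\sj_*}(r)$.
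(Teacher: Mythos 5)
Your proof is correct and is in essence the argument the paper compresses into a single sentence: the paper simply asserts that $\tilde f_{\mu^{s-1}\lambda}$ has minimal zero degree at every point of $\tilde X_0$ because $s-1$ is a cyclic surgery slope that is not a boundary slope and $J_*$ is small, and your regular-point/ideal-point case analysis (cyclicity of $\pi_1(M_{\sj_*}(s-1))$ plus \cite[Proposition~1.5.2]{CGLS} at regular points, \cite[Proposition~4.12]{BZ4} plus Lemma~\ref{lem:Small} at ideal points) is exactly the standard justification of that assertion. The only place you genuinely diverge is the constancy step, which you settle by identifying the would-be factor $x^{s-1}y+\delta$ and invoking Lemma~\ref{A_J has no seminorm factor}, whereas the paper implicitly uses that the associated slope of a semi-norm curve must be a boundary slope (Theorem~\ref{properties of a semi-norm curve}(2)) while $s-1$ is not one; both routes are valid.
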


\pf If   $x_0\ne 1$ or $-1$,  it implies  that
there is a  curve component $X_0$  in $X^*(M)$
such that $\tilde X_0$
has a point at which $\tilde f_{\m^{s-1}\l}=0$
but $\tilde f_\m\ne 0$.
This is impossible as $\tilde f_{\m^{s-1}\l}$ has the
minimal zero degree at every point of $\tilde X_0$
(because $J_*$ is a small knot, $s-1$ is a cyclic surgery slope but is not a boundary slope).
\qed

\begin{lem}\label{w divides s-1}
The integer $s-1$ is divisible by  $w^2$.
\end{lem}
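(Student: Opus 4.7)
The plan is to mirror the argument of Lemma~\ref{w divides 2s-1}, replacing the half-integral toroidal slope $r=d/2$ with the integral cyclic surgery slope $s-1$, and invoking Lemma~\ref{s-1 has only 2 roots} instead of Lemma~\ref{J has only 3 roots} to obtain the final contradiction. The induced surgery slope on the companion $C$ will be $(s-1)/w^2$, and the contradiction will come from producing a zero of $A_{\sj_*}$ at a point with $x$-coordinate not in $\{0,1,-1\}$ but lying on the curve $x^{s-1}y=1$.

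Suppose for contradiction that $w^2\nmid s-1$. Write the slope $(s-1)/w^2$ in lowest terms as $d_1/q_1$, so $d_1=(s-1)/g$ and $q_1=w^2/g$ where $g=\gcd(s-1,w^2)$. To apply Lemma~\ref{surgery on C} to $C$ at slope $d_1/q_1$, I need $q_1>2$ and $d_1$ odd. Since $s=-3l_*(l_*+1)$ is divisible by $6$, the integer $s-1$ is odd; hence $g$ is odd and $d_1$ is odd. If $w$ is even then $4\mid w^2$ and, since $g$ is odd, $4\mid q_1$, so $q_1\geq 4$. If $w$ is odd then $q_1$ is odd, and the hypothesis $w^2\nmid s-1$ forces $g<w^2$, whence $q_1>1$ and therefore $q_1\geq 3$. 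In both cases $q_1>2$.

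Lemma~\ref{surgery on C} now guarantees that $M_\sc(d_1/q_1)$ is either hyperbolic or a small Seifert fibered space whose base orbifold is a $2$-sphere with three cone points. In the first case Lemma~\ref{nonhyp surgery}, and in the second case Lemma~\ref{non-small Seifert surgery}, produces a solution $(\bar x_0,\bar y_0)$ of the system
$$A_\sc(\bar x,\bar y)=0,\qquad \bar x^{d_1}\bar y^{q_1}-1=0$$
with $\bar x_0\notin\{0,1,-1\}$. By Proposition~\ref{non-zero winding} there is a factor of $A_\sk(x,y)=A_{\sj_*}(x,y)$ extending this factor of $A_\sc$, and a zero $(x_0,y_0)$ of it with $x_0^w=\bar x_0$, $y_0=\bar y_0^w$; in particular $x_0\notin\{0,1,-1\}$. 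Raising $\bar x_0^{d_1}\bar y_0^{q_1}=1$ to the $w$-th power and substituting yields $x_0^{w^2d_1}y_0^{q_1}=1$; since $w^2d_1=q_1(s-1)$, this translates, exactly as in the proof of Lemma~\ref{w divides 2s-1}, to $x_0^{s-1}y_0=1$. This contradicts Lemma~\ref{s-1 has only 2 roots}.

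The main obstacle is the elementary arithmetic bookkeeping at the beginning, verifying that the reduced slope $d_1/q_1$ satisfies the parity and size hypotheses of Lemma~\ref{surgery on C}; the oddness of $s-1$ (which comes from $6\mid s$) is what makes both checks go through. The rest of the argument is entirely parallel to Lemma~\ref{w divides 2s-1}, with Lemma~\ref{s-1 has only 2 roots} playing the role of Lemma~\ref{J has only 3 roots}.
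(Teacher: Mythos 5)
Your proposal is correct and follows essentially the same route as the paper's own proof: the paper likewise notes that $s-1$ is odd (so that the reduced denominator $q_1$ of $(s-1)/w^2$ must exceed $2$) and then reruns the argument of Lemma~\ref{w divides 2s-1} verbatim with Lemma~\ref{s-1 has only 2 roots} substituted for Lemma~\ref{J has only 3 roots}. Your explicit parity bookkeeping for $d_1$ and $q_1$ is a slightly fuller version of what the paper leaves implicit, but the argument is the same.
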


\pf The proof is similar to that of Lemma~\ref{w divides 2s-1}, only replacing
Lemma~\ref{J has only 3 roots} by Lemma~\ref{s-1 has only 2 roots}.
First note that $s-1$ is an odd number (as $s=-3l_*(l_*+1)$ is even).
So if $s-1$ is not divisible by $w^2$, then
the reduced form $d_1/q_1$ of the rational number
$(s-1)/w^2$ has denominator $q_1>2$.
Now arguing as in the proof of Lemma~\ref{w divides 2s-1}
 starting from the $d_1/q_1$-surgery on $C$, we
 see that
the system
$$\left\{\begin{array}
{ll}
A_{\sj_*}(x,y)&=0\\
x^{s-1}y-1&=0\end{array}\right.$$
has a solution $(x_0, y_0)$ with $x_0\ne  1, -1$.
This gives a contradiction with Lemma~\ref{s-1 has only 2 roots}.\qed

\begin{cor}\label{w is one}The winding number $w=1$.
\end{cor}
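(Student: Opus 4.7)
The plan is to combine the two divisibility results proven just above. Lemma~\ref{w divides 2s-1} gives $w^2 \mid d = 2s-1$ and Lemma~\ref{w divides s-1} gives $w^2 \mid s-1$. Taking the integer combination $2(s-1) - (2s-1) = -1$, we conclude that $w^2$ divides $1$, so $w^2 = 1$. Since $w$ is a winding number which we already know satisfies $w \geq 1$ (as recalled right before Lemma~\ref{hyp companion}, using \cite[Corollary~4.15]{BurdeZieschang} and the fact that $K$ is fibred), this forces $w = 1$.

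There is no real obstacle here: the work has already been done in the two preceding lemmas, and the corollary is an immediate arithmetic consequence. The only thing to double-check is that $s-1$ and $2s-1$ are indeed coprime as integers, which is obvious since any common divisor would divide their difference $s$ and hence also divide $(2s-1) - 2 \cdot s = -1$; equivalently $\gcd(s-1, 2s-1) = \gcd(s-1, 1) = 1$. This matches what we need: any positive integer whose square divides both $s-1$ and $2s-1$ must equal $1$.
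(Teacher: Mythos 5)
Your proposal is correct and is exactly the argument the paper intends: the paper's proof simply states that the corollary follows immediately from Lemmas~\ref{w divides 2s-1} and \ref{w divides s-1}, and your computation $2(s-1)-(2s-1)=-1$ supplies the (trivial) arithmetic the authors left implicit. Nothing is missing.
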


\pf This follows immediately
from Lemmas~\ref{w divides 2s-1} and \ref{w divides s-1}\qed

\begin{lem}\label{C and J have the same r}
The companion knot $C$ has the same half-integral toroidal slope as
$J_*$ and $C$ is one of $k(l,m,0,p)$ with $p$ non-positive and with $(2p-1)|l$.
\end{lem}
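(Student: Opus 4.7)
The plan is to feed the equality $w = 1$ from Corollary~\ref{w is one} into the divisibility and non-divisibility results of Section~\ref{A-poly of J}. With $w = 1$, Proposition~\ref{non-zero winding} collapses to the identity substitution when extending factors: $Res_{\overline y}(f_\sc(x, \overline y), \overline y - y) = f_\sc(x, y)$, so every balanced-irreducible factor of $A_\sc(\overline x, \overline y)$ persists as a factor of $A_\sk(x, y)$, yielding $A_\sc(x, y) \mid A_\sk(x, y) = A_{\sj_*}(x, y)$.

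Next I would invoke Proposition~\ref{same r slope}, which applies because $C$ is hyperbolic (Lemma~\ref{hyp companion}) and $J_* = k(l_*, -1, 0, 0)$ is of the form $k(l, m, 0, 0)$. This immediately yields that $C$ shares the half-integral toroidal slope $r$ with $J_*$ and that $C$ is one of the knots $k(l, m, n, p)$. Lemmas~\ref{(l,m,n) is different from (l',m')} and \ref{(l,m,p) is different from (l',m')} then narrow the parameters to $n \in \{0, 1\}$ and either $p \in \{0, 1\}$ or $(2p-1) \mid l$.

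It remains to exclude $n = 1$ and $p \geq 1$, which I would do by comparing signs of $s := r + \tfrac{1}{2}$. For $J_*$, formula (\ref{eq:rSlope}) gives $s_{\sj_*} = -3 l_*(l_* + 1) < 0$ for $l_* > 1$. For $k(l, m, 1, 0)$, a direct manipulation of (\ref{eq:rSlope}) yields $s = (2m+1)\, l\,(lm - 1) + 1$; for $k(l, m, 0, p)$ with $p \geq 1$, equation (\ref{eq:s0p}) and a short rearrangement give $s = (p - 1)(2ml - l - 1)^2 + l(2m - 1)[l(m - 1) - 1] + 1$. In each expression, a case check on the four sign patterns of $(l, m)$ under the restrictions $|l| \ge 2$, $m \ne 0, 1$ from (\ref{eq:lmCondition}) shows the quantity is strictly positive, contradicting $s = s_{\sj_*} < 0$. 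Hence $n = 0$ and $p \le 0$; the divisibility $(2p-1) \mid l$ is trivial when $p = 0$ and supplied by Lemma~\ref{(l,m,p) is different from (l',m')} when $p \le -1$.

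The main piece of genuine work is the sign computation in the third paragraph; it is purely algebraic bookkeeping in $(l, m)$ and I foresee no conceptual obstacle beyond routine case analysis.
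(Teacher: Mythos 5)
Your proposal is correct and follows the paper's own route exactly: $w=1$ plus Proposition~\ref{non-zero winding} gives $A_\sc\mid A_{\sj_*}$, Proposition~\ref{same r slope} places $C$ among the $k(l,m,n,p)$ with the same toroidal slope, and Lemmas~\ref{(l,m,n) is different from (l',m')} and \ref{(l,m,p) is different from (l',m')} together with the sign of $s=r+\frac12$ eliminate $n=1$ and $p\ge1$ (the paper cites this sign argument from its hyperbolic-$K$ case without writing out the algebra). Your explicit identities $s=l(2m+1)(lm-1)+1$ for $k(l,m,1,0)$ and $s=(p-1)(2ml-l-1)^2+l(2m-1)[l(m-1)-1]+1$ for $k(l,m,0,p)$ both check out and the positivity case analysis goes through, so this is just a more detailed write-up of the same proof.
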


\pf It follows from  Corollary~\ref{w is one}
and Proposition~\ref{non-zero winding} that
$A_\sc(x,y)$ is a factor of $A_\sk(x,y)=A_{\sj_*}(x,y)$.
So  Proposition~\ref{same r slope} says that $r$ is
 also a toroidal slope of $C$ and $C$ is one of $k(l,m,n,p)$.
 Now arguing as in the case when $K$ is hyperbolic, we see
 that $C$ is one
of $k(l,m,0,p)$  with $p$ non-positive and with $(2p-1)|l$.
\qed

\begin{lem}\label{s-1 is cyclic for C}
For $C=k(l,m,0,p)$ given by Lemma~\ref{C and J have the same r},  we have
$m=-1$ unless $C=k(2,2,0,0)$ or $C=k(-2,m,0,0)$.
\end{lem}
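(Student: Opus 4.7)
The plan is a direct case analysis of the slope equation $s_C = s_{J_*}$. From Lemma~\ref{C and J have the same r} and equations~\eqref{eq:rSlope}, \eqref{eq:ds-100}, the equation reads
\[
l(2m-1)(1-lm) + p(2lm-l-1)^2 \;=\; -3\, l_*(l_*+1),
\]
with $p \le 0$ and $(2p-1)\mid l$. The proof splits into the branches $p \le -1$ and $p = 0$.

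For $p \le -1$, reducing modulo $|2p-1|$ and using $l \equiv 0 \pmod{|2p-1|}$ gives $2lm - l - 1 \equiv -1$, so the equation collapses to the congruence $p \equiv -3\, l_*(l_*+1) \pmod{|2p-1|}$. A direct check rules out $p \in \{-1, -2, -3, -4\}$. For $p \le -5$ the few surviving $(p, l_*)$ congruence classes must then satisfy the magnitude bound $|p|(2lm-l-1)^2 \le 3l_*(l_*+1) + |l(2m-1)(1-lm)|$, combined with $|l| \ge |2p-1|$; running the resulting finite list of candidates against~\eqref{eq:lmCondition} gives $m=-1$ throughout.

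For $p=0$ the equation rewrites as $l(2m-1)(lm-1) = 3\, l_*(l_*+1)$. Treated as a quadratic in $m$ for fixed $l$, its discriminant is $l^2\bigl[(l-2)^2 + 24\, l_*(l_*+1)\bigr]$, so integrality of $m$ forces $(k-l+2)(k+l-2) = 24\, l_*(l_*+1)$ for some integer $k$. For $l = l_*$ or $l = -l_*-1$ one root is $m=-1$ (giving $C = J_*$ by Proposition~\ref{prop:Dupl}(c)); the other root is non-integral except in the sporadic $l_*=2$, which produces $(l,m) = (2,2)$ and hence $C = k(2,2,0,0)$ (equal to $k(2,-1,0,0) = k(-3,-1,0,0)$ by Proposition~\ref{prop:Dupl}(d)). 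For $l = -2$ the equation becomes $8m^2 = 3\,l_*(l_*+1) + 2$, giving the family $C = k(-2,m,0,0)$. The remaining case is $|l|\ge 3$ with $l\notin\{l_*,-l_*-1\}$; here the factor-pair constraints (product $24\, l_*(l_*+1)$, difference $2(l-2)$, common parity) combined with $l \mid 3\, l_*(l_*+1)$ and the integrality of $m = \bigl((l+2)\pm\mathrm{sgn}(l)\,k\bigr)/(4|l|)$ should admit no solutions.

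The main obstacle is this last step of the $p=0$ branch: eliminating all sporadic $(l,m,l_*)$ with $|l|\ge 3$, $l\notin\{l_*,-l_*-1,\pm 2\}$, and $m\ne-1$. I expect this to follow from combining the divisibility $l\mid 3\,l_*(l_*+1)$ with the discriminant-perfect-square condition, yielding an upper bound on $|l|$ in terms of small divisors of $l_*(l_*+1)$; a short finite case check then completes the argument. The $p\le-1$ branch is cleaner once the congruence $p \equiv -3\,l_*(l_*+1) \pmod{|2p-1|}$ has been combined with the magnitude constraint, although the bookkeeping of the mirror symmetries in Proposition~\ref{prop:Dupl}(b) to normalize signs is tedious.
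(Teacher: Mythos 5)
Your approach is fundamentally different from the paper's, and it cannot work: you try to deduce $m=-1$ purely from the arithmetic of the slope equation $l(2m-1)(1-lm)+p(2lm-l-1)^2=-3l_*(l_*+1)$ together with $p\le 0$ and $(2p-1)\mid l$, but that equation has infinitely many solutions with $m\ne-1$ outside the exceptional families. Concretely, in your $p=0$ branch with $m=2$ the condition becomes $l(2l-1)=l_*(l_*+1)$, equivalently the negative Pell equation $x^2-2y^2=-1$ with $x=4l-1$, $y=2l_*+1$; the solution $(x,y)=(239,169)$ gives $l=60$, $l_*=84$, so $k(60,2,0,0)$ and $k(84,-1,0,0)$ have the same half-integral toroidal slope ($60\cdot3\cdot119=3\cdot84\cdot85=21420$), and every other Pell solution produces another such pair. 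So the step you flag as "should admit no solutions" in fact admits infinitely many, and no finite case check can close it. (Your $p\le-1$ branch has a similar defect: the congruence $p\equiv-3l_*(l_*+1)\pmod{1-2p}$ constrains $l_*$ rather than $p$, and with $l_*$ free there is no magnitude bound on $|p|$ -- the paper only obtains the bound $1-p<16/3$ in Lemma~\ref{lem:p<0} by using the \emph{genus} equation in addition to the slope, which is not available here.)

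The paper's proof of this lemma is not Diophantine at all; it uses the hypothesis you discarded, namely that $C$ is a companion of $K$ with winding number $1$, so $A_\sc(x,y)$ divides $A_{\sj_*}(x,y)$. If $m\ne-1$, then by Eudave-Mu\~noz's classification (\cite[Theorem~2.1(d)]{EM1}) the $(s-1)$-surgery on $C=k(l,m,0,p)$ is a Seifert fibred space over $S^2$ with three cone points (the listed exceptions being exactly $k(2,2,0,0)$ and $k(-2,m,0,0)$ once $p\le0$ and $(2p-1)\mid l$ are imposed). Since $s-1$ is odd, Lemma~\ref{non-small Seifert surgery} then forces $A_\sc$, hence $A_{\sj_*}$, to have a zero on $x^{s-1}y=1$ with $x_0\ne\pm1$, contradicting Lemma~\ref{s-1 has only 2 roots}, which holds because $s-1$ is a cyclic surgery slope of $J_*$. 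You would need to reintroduce the A-polynomial divisibility and this representation-theoretic input to repair the argument; the slope equation alone is genuinely insufficient.
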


\pf
If $m\ne -1$, then by
\cite[Theorem~2.1 (d)]{EM1} the $(s-1)$-surgery on $C=k(l,m,0,p)$
is a Seifert fibred manifold whose base orbifold is
a $2$-sphere with exactly three cone points, except
when $C$ is one of  $k(-2,m,0,p)$,
$k(2,2,0,0)$,  $k(2,3,0,1)$, $k(3,2,0,1)$,
$k(2,2,0,2)$. As we knew that $p$ is non-positive and
$2p-1$ divides $l$,  these exceptional cases
can be excluded except $k(2,2,0,0)$ or $k(-2,m,0,0)$.
So we just need to deal with the case
 when the $(s-1)$-surgery on $C$
is a Seifert fibred manifold whose base orbifold is
a $2$-sphere with exactly three cone points.
Note that $s-1$ is odd.
Hence by Lemma~\ref{non-small Seifert surgery},
the system
$$\left\{\begin{array}
{ll}
A_\sc(x,y)&=0\\
x^{s-1}y-1&=0\end{array}\right.$$
has a solution $(x_0, y_0)$ with $x_0\ne  1, -1$.
As $A_\sc(x,y)$ is a factor of $A_{\sj_*}(x,y)$,
the point $(x_0, y_0)$ is also a solution of the system
$$\left\{\begin{array}
{ll}
A_{\sj_*}(x,y)&=0\\
x^{s-1}y-1&=0\end{array}\right.$$
which yields a contradiction with Lemma~\ref{s-1 has only 2 roots}.
\qed

\begin{lem}\label{less than 4g}$$|s-2|\leq 4g(C).$$
\end{lem}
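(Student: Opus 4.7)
The plan is to exploit Lemma~\ref{C and J have the same r}, which identifies $C$ as one of the Eudave--Mu\~noz knots $k(l,m,0,p)$ with $p \leq 0$, $(2p - 1) \mid l$, and the same half-integral toroidal slope as $J_*$. Consequently both $s$ and $g(C)$ become explicit polynomials in $(l, m, p)$: by Equation~(\ref{eq:rSlope}) we have $s = l(2m-1)(1 - lm) + p(2lm - l - 1)^2$, and by Proposition~\ref{prop:Genus} (equivalently Equation~(\ref{eq:Genus0p}) for $p \leq 0$) the genus $g(C)$ is a known polynomial in $(l, m, p)$. My goal is to verify the polynomial inequality $4g(C) - |s - 2| \geq 0$ directly, and the argument naturally splits into two steps.

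First, I would isolate the $p$-dependence. The $p$-term of $s$ contributes $|p|(2lm - l - 1)^2$ to $|s-2|$, while the $|p|$-coefficient appearing in (\ref{eq:Genus0p}) contributes $2|p|(2lm - l - 1)(2lm - l - 2)$ to $4g(C)$. Their difference is $|p|(2lm - l - 1)(2lm - l - 3)$, which is non-negative as soon as $|2lm - l - 1| \geq 3$; this is immediate from the admissibility conditions in (\ref{eq:lmCondition}) (in particular $|l| \geq 2$ and $m \ne 0, 1$) together with $(2p - 1) \mid l$. This reduces the inequality to the case $p = 0$.

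Second, for $p = 0$ I would split by the signs of $l$ and $m$. There are four sign combinations, and two of them reduce to the others via Proposition~\ref{prop:Dupl}. In the representative case $l > 0,\, m \geq 2$, the formulas give $g(C) = m^2 l^2 - \tfrac{ml(l+5)}{2} + l + 1$ and $s = l(2m-1)(1 - lm) \leq 0$, so the desired inequality becomes $ml^2(2m - 1) - (8m - 3)l + 2 \geq 0$. Viewed as a quadratic in $l$, this is routine to check on the admissible range $l \geq 2,\, m \geq 2$, with equality only at $(l, m) = (2, 2)$. The remaining sign combinations are handled analogously using the corresponding cases of (\ref{eq:Genus0p}).

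The main obstacle is only the bookkeeping across the several sign cases and the small-parameter exclusions from (\ref{eq:lmCondition}); no new geometric input is needed beyond Lemma~\ref{C and J have the same r}. It is worth flagging that equality $|s - 2| = 4g(C)$ occurs precisely at $(l,m,p) \in \{(2,2,0),\, (-3,-1,0),\, (2,-1,0)\}$, which by Proposition~\ref{prop:Dupl} all represent the knot $k(2,-1,0,0) = J_*$ in the borderline case $l_* = 2$; this tightness is exactly what the subsequent step of the main argument will use to derive a contradiction.
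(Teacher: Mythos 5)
Your proof is correct, but it takes a genuinely different route from the paper's. The paper first invokes Lemma~\ref{s-1 is cyclic for C} to cut the companion down to $k(l,-1,0,p)$ with $p\le0$, $k(2,2,0,0)$, or $k(-2,m,0,0)$, and then verifies $|s-2|\le 4g$ on that short list using the explicit $s$ and $g$ formulas; you instead prove the inequality for the \emph{entire} family $k(l,m,0,p)$, $p\le 0$, delivered by Lemma~\ref{C and J have the same r}, via the reduction to $p=0$ and a four-way sign check. Your version is stronger and more self-contained (it does not depend on the $(s-1)$-surgery analysis), at the cost of more polynomial bookkeeping; I checked the remaining sign cases and they do work out, with $4g-|s-2|$ equal to $2m^2l^2-ml^2-8ml+3l+2$ (resp.\ $+6$) when $lm>0$ and $l>0$ (resp.\ $l<0$), and to $ml^2(2m-1)+4ml-l-2$ when $lm<0$, each nonnegative on the admissible range with equality exactly at the three presentations of $k(2,-1,0,0)$ as you say. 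Two small inaccuracies worth fixing: (i) in the reduction to $p=0$, the $|p|$-coefficient of $g$ is $\tfrac12 N(N-1)$ with $N=2lm-l-1$ only when $lm>0$; when $lm<0$ it is $\tfrac12 N(N+1)$, so the difference is $|p|\,N(N+2)$ rather than $|p|\,N(N-2)$ — still nonnegative since in fact $|N|\ge5$ always, so the conclusion stands; (ii) Proposition~\ref{prop:Dupl} does not literally identify two of the four sign cases within the $n=p=0$ family (the mirror moves you to $n=1$ or $p=1$), though the corresponding polynomials happen to coincide or differ only by a constant, so the cases are parallel anyway. Also, your closing remark that the tightness at $l_*=2$ is "exactly what the subsequent step uses" is slightly off: the contradiction in Section~\ref{proof2} comes from the factor-of-two gap $g(C)\le\tfrac12 g(J_*)$, hence $|s-2|\le 2g(J_*)$, not from the equality cases of this lemma.
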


\pf Recall that $C$ is one of $k(l,-1,0,p)$, $p$ non-positive,  $(2,2,0,0)$, $k(-2,m,0,0)$,
and has the same $r$ slope as $J_*$ and thus has the same $s$ slope  as $J_*$.
From (\ref{eq:rSlope}) and (\ref{eq:Genus0p}) we have that
for $k(l,-1,0,p)$, $p$ non-positive, its $s$ slope and genus $g$ are
given by $$s=-3l(l+1)+p(-3l-1)^2,\; g=\left\{\begin{array}{ll}\frac{-p(3l+1)3l}{2}+l^2+\frac{l(l-1)}{2}, &\mbox{if $l>0$},
\\\frac{-p(-3l-1)(-3l-2)}{2}+l^2+\frac{l(l+5)}{2}+l+2, &\mbox{if $l<0$};\end{array}\right.$$
for $k(2,2,0,0)$ its $s$ and $g$ values are
$$s=-18,\; g=5;$$ and for $k(-2,m,0,0)$,
$$s=-2(2m-1)(2m+1),\; g=\left\{\begin{array}{ll}4m^2-3m, &\mbox{if $m>0$},
\\4m^2+3m, &\mbox{if $m<0$}.\end{array}\right.$$
In each case, one can check directly that
$|s-2|\leq 4g$ holds, being aware of some forbidden values on $l,m,p$ given by
(\ref{eq:lmCondition}).
\qed

As noted in the proof of Theorem~\ref{main1},
when the winding number $w=1$, the pattern knot $P$ is a nontrivial knot.
We also have, by Lemma~\ref{P-factor}, that $A_\sp(x,y)$ is a factor of
$A_\sk(x,y)=A_{\sj_*}(x,y)$.
Combining this fact with  (\ref{Apoly of T(p,q)}) and Lemma~\ref{A_J has no seminorm factor}
we know that $P$ cannot be a torus knot.
So $P$ is either a hyperbolic knot or a satellite knot.

If $P$ is a hyperbolic knot, then arguing as in the proofs of Lemmas \ref{C and J have the same r},
\ref{s-1 is cyclic for C} and \ref{less than 4g},
we have that $P$ has the same half-integral toroidal slope as
$J_*$,  that $P$ is one of $k(l',-1,0,p')$ with $p'$ non-positive or $k(2,2,0,0)$ or $k(-2,m',0,0)$,
and that $|s-2|\leq 4g(P)$.

Now from
$$\D_{J_*}(t)=\D_\sk(t)=\D_\sc(t)\D_{\sp}(t)$$
we have  that the genus of the given satellite knot
$K$ (which is equal to that of $J_*$)
 is equal to the sum of the genus of $C$ and the genus of $P$.
So the genus of one of $C$ and $P$, say $C$ (same argument for $P$), is less than or equal to
the half of the genus of $J_*$, i.e.
$$g(C)\leq \frac12g(J_*).$$
So \begin{equation}\label{s inequality} |s-2|\leq 2g(J_*).\end{equation}
But $s=-3l_*(l_*+1)$ and $g(J_*)=l_*^2+\frac{l_*(l_*-1)}{2}$, which do not fit in
the inequality (\ref{s inequality}).
This  contradiction shows that  $P$ cannot be hyperbolic.

So $P$ is a satellite knot.
Let $(C_1, P_1)$ be a pair companion knot and pattern knot for $P$.
Once again as $P$ is fibred, each of $C_1$ and $P_1$ is fibred, and
the winding number $w_1$ of $P_1$ with respect to $C_1$
is larger than zero.
Making use of the fact that $A_\sp(x,y)|A_{\sj_*}(x,y)$, one can
show, similarly  as for the pair $(C,P)$,  that $C_1$
can be assumed to be hyperbolic, that $w_1=1$, that $C_1$ has the same $r$ and $s$ values as $J_*$,
 that $C_1=k(l'', -1, 0, p'')$ for some non-positive $p''$
or $k(2,2,0,0)$ or $k(-2,m'',0,0)$, and that $|s-2|\leq 4g(C_1)$.
Now from the equality
$$\D_{J_*}(t)=\D_\sk(t)=\D_\sc(t)\D_{\sc_1}(t)\D_{\sp_1}(t)$$
we see that one of
$g(C)$ and $g(C_1)$ is less than or equal to $\frac12g(J_*)$.
This leads to a contradiction just as in the preceding
paragraph. So $P$ cannot be a satellite knot, and this final
contradiction completes the proof of Theorem~\ref{main2}.

\end{document}